\newcommand{\qed}{\hfill$\boxtimes\hspace{-1.725ex}\boxplus$}
\newcommand{\qedef}{\hfill$\otimes\hspace{-1.75ex}\oplus$}
\newcommand{\qecon}{\hfill$\circledast$}
\newtheorem{theorem}{Theorem}[section]
\newtheorem{definition}[theorem]{Definition}
\newtheorem{pro}[theorem]{Proposition}
\newtheorem{cor}[theorem]{Corollary}
\newtheorem{lemma}[theorem]{Lemma}
\newtheorem{remark}[theorem]{Remark}
\newtheorem{convention}[theorem]{Convention}
\newenvironment{proof}{\noindent\mbox{\bf Proof. }}
{}
\author{Ziba \textsc{Assadi}} 
\keywords{Decidability, Undecidability, Completeness, Incompleteness,
First-Order Theory, Quantifier Elimination, Ordered Structures.} 
\begin{document}

\frontmatter 

\pagestyle{plain} 


\begin{titlepage}
\begin{center}

\includegraphics[scale=0.5]{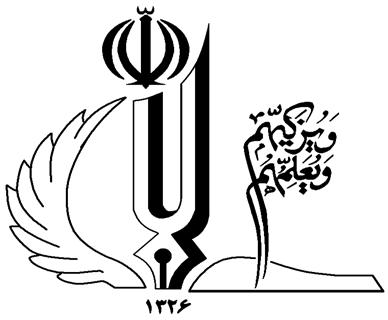}

\vspace{-1.55cm}
\vspace*{.06\textheight}


{\large \textswab{University of Tabriz}}


\vspace{2.5cm} 
\textsc{\large Faculty of Mathematical Sciences\\
University of Tabriz, IRAN
\\[1.5cm]} 

\HRule \\[0.4cm] 
{\huge \bfseries \ttitle\par}\vspace{0.4cm} 
\HRule \\[1.5cm] 

\vspace{0.75cm}

\emph{Author:}\\
{\authorname}

\vspace{0.75cm}

\emph{Supervisor:} \\
{\supname}
%
%

\vfill

\large \textit{A Thesis Submitted in Partial Fulfillment of the Requirements\\ for the degree of \degreename } (Ph.D.)\\
\textit{in Pure Mathematics \textup{(}Mathematical Logic\textup{)}}\\[2cm]

\vfill

{\large January 2019}\\[4cm] 

\vfill
\end{center}
\end{titlepage}
\let\cleardoublepage\clearpage

\dedicatory{\LARGE  \textsl{Dedicated To My Supervisor}, \\ \textup{ Professor {\sc Saeed Salehi}}, \\[3.5cm]  \textit{to whom I owe much more than what I can ever express}.}


\tableofcontents 




\begin{acknowledgements}
\addchaptertocentry{\acknowledgementname} 

\bigskip

\bigskip

\centerline{\Large \textfrak{In the Name of the Creator of Science, Mathematics and Logic}}

\bigskip

\bigskip

\bigskip

First and foremost, I would like to express my most grateful thanks to my supervisor, to whom this thesis is dedicated wholeheartedly, for teaching  me a lot and taking my hands in the hard moments of wandering in the wonderland  of science and research.

\bigskip

I also thank my advisor Professor Jafarsadegh Eivazloo for studying this thesis and for teaching me.

\bigskip

I thank Professors Mohammad Bagheri and Mohammad Shahriari and Jaber Karimpoor for refereeing the thesis and for their fruitful comments and suggestions.

\bigskip

Last but not the least, I am grateful to my parents for their unending love and to my brother and sister for being there when I needed them most.

\bigskip

\hfill
\textswab{Ziba} $\textgoth{A}\mathfrak{ssadi}$, \textswab{2019}




\end{acknowledgements}


\begin{abstract}
\addchaptertocentry{\abstractname} 

\noindent
\HRule\\[-0.25cm]

\noindent
The  ordered structures  of natural, integer, rational and real numbers are studied in this thesis. The theories of these numbers in the language of order are decidable and finitely axiomatizable. Also, their theories in the language of order and addition are decidable and infinitely axiomatizable. For the language of order and multiplication, it is known that the theories of $\mathbb{N}$ and $\mathbb{Z}$ are not decidable (and so not axiomatizable by any computably enumerable set of sentences). By Tarski's theorem, the multiplicative ordered structure of $\mathbb{R}$ is decidable also. In this thesis we prove this result directly by quantifier elimination and present an explicit infinite axiomatization. The structure of $\mathbb{Q}$ in the language of order and multiplication seems to be missing in the literature. We show the decidability of its theory by the technique of quantifier elimination and after presenting an infinite axiomatization for this structure, we prove that it is not finitely axiomatizable.
\end{abstract}

\noindent
\HRule

\vspace{1cm}

\noindent {\bf Keywords}: \keywordnames

\mainmatter 

\pagestyle{thesis} 


\doublespacing

\chapter*{Introduction} 
\addcontentsline{toc}{chapter}{Introduction}
\label{intro} 

{\em Entscheidungsproblem\label{dp}}, one of the fundamental problems of (mathematical) logic, asks for a
single-input Boolean-output algorithm that takes a formula $\varphi$ as input and outputs {\tt `yes'} if $\varphi$ is logically valid and outputs {\tt `no'} otherwise.
 Now, we know that this problem is not (computably) solvable.  One reason for this is the existence of an essentially undecidable and finitely axiomatizable theory, see e.g.~\cite{visser}; for another proof see~\cite[Theorem 11.2]{bbj}. However, by G\"odel's completeness theorem, the set of logically valid formulas is computably enumerable, i.e., there exists an input-free algorithms that (after running) lists all the valid formulas (and nothing else).
For the structures, since their theories are complete, the story is different: the theory of a structure is either decidable or that structure is not axiomatizable (by any computably enumerable set of sentences; see e.g.~\cite[Corollaries 25G and 26I]{enderton} or~\cite[Theorem 15.2]{monk}).
Axiomatizability or decidability of theories of natural, integer, rational, real and complex numbers in different languages have long been considered by logicians and mathematicians.
For example, the additive theory of natural numbers $\langle\mathbb{N};+\rangle$
    was shown to be decidable by Presburger in 1929
     (and by Skolem in 1930; see \cite{smorynski}).
     The multiplicative theory of the natural numbers
     $\langle\mathbb{N};\times\rangle$ was announced  to be decidable by Skolem in 1930. Then it was expected that the theory of addition and multiplication of natural numbers
would be decidable too; confirming Hilbert's Program. But the world was shocked in 1931
 by G\"odel's incompleteness theorem which implies that  the theory of
  $\langle\mathbb{N};+,\times\rangle$ is undecidable (see the subsection~\ref{nom} below).
In this thesis we study the theories of the sets $\mathbb{N}$, $\mathbb{Z}$, $\mathbb{Q}$ and $\mathbb{R}$ in the languages $\{<\}$, $\{<,+\}$ and $\{<,\times\}$; see the table below.

\begin{center}
\begin{tabular}{|c||c|c|c|c|}
\hline
  & $\mathbb{N}$ & $\mathbb{Z}$ & $\mathbb{Q}$ & $\mathbb{R}$ \\
\hline
\hline
$\{<\}$ & Thm.~\ref{thm-on} & Thm.~\ref{thm-oz} & Thm.~\ref{thm-o} & Thm.~\ref{thm-o} \\
\hline
$\{<,+\}$ & Thm.~\ref{rem-noa} & Thm.~\ref{thm-oaz} & Thm.~\ref{thm-oa} & Thm.~\ref{thm-oa} \\
\hline
$\{<,\times\}$ & Prop.~\ref{subsec-n} & Prop.~\ref{subsec-z} & Cor.~\ref{omq+} & Thm.~\ref{thm-or} \\
\hline
$\{+,\times\}$ & \cite{enderton} & Prop.~\ref{subsec-z} &   Prop.~\ref{subsec-q} & Subsec.~\ref{subsec-r} \\
\hline
\end{tabular}
\end{center}

Let us note that order is definable in the language $\{+,\times\}$ in these sets: in $\mathbb{N}$ by
  $x<y\iff \exists z (z\!+\!z \neq z\wedge x\!+\!z\!=\!y)$, and in $\mathbb{Z}$ by Lagrange's four square theorem\label{fslt} $x<y$ is equivalent with    $\exists t,u,v,w (x\!\neq\!y \wedge x\!+\!t\!\cdot\!t\!+\!u\!\cdot\!u\!+\!v\!\cdot\!v\!+\!w\!\cdot\!w =y).$ The four square  theorem holds in $\mathbb{Q}$ too: for any   $p/q\!\in\!\mathbb{Q}^+$ we have $pq\!>\!0$ so  $pq\!=\!a^2\!+\!b^2\!+\!c^2\!+\!d^2$ for some integers $a,b,c,d$; therefore, 
  $p/q\!=\!pq/q^2\!=\!(a/q)^2\!+\!(b/q)^2\!+\!(c/q)^2\!+\!(d/q)^2$ holds. Thus, the same formula defines the order ($x<y$) in $\mathbb{Q}$ as well. Finally, in   $\mathbb{R}$  the relation      $x<y$ is equivalent with the formula $\exists z (z\!+\!z\!\neq\!z\wedge x+z\!\cdot\!z=y)$.

The decidability of $\mathbb{N},\mathbb{Z},\mathbb{Q},\mathbb{R}$ in the languages $\{<\}$ and $\{<,+\}$ is already known.  It is also known that the theories of   $\mathbb{N}$ and $\mathbb{Z}$ in the language $\{<,\times\}$ are undecidable,
because the addition operation is definable in the multiplicative ordered structure of natural numbers by Tarski-Robinson's identity. Whence, the theory of $\langle\mathbb{N};\times,<\rangle$ is undecidable. This also holds for the domain of the integer numbers, since the addition operation is definable in $\langle\mathbb{Z};\times,<\rangle$ which implies the undecidability of the theory of $\langle\mathbb{Z};\times,<\rangle$.
The theory of    $\mathbb{R}$ in the language $\{<,\times\}$ is decidable by Tarski-Seidenberg's theorem\label{tst} which states the decidability of the theory of  $\langle\mathbb{R};<,+,\times\rangle$ by showing that $\langle\mathbb{R};<,+,\times\rangle$ is aximatizable by the theory of real closed ordered fields.
Indeed, no heavy algebraic tools are needed for axiomatizing the multiplicative order theory of the real numbers, $\langle\mathbb{R};\times,<\rangle$. The proof of Tarski's theorem appears in a few number of logic books; see e.g.~\cite{az} and~\cite{kk}. Interestingly, the algebraic-geometric proof is more beautiful and more clever; see e.g.~\cite{bcr} and~\cite{bpr}. Although this theorem of Tarski implies the decidability of $\langle\mathbb{R};\times,<\rangle$, it does not present an explicit axiomatization for this structure.
Here, we prove this directly by presenting an explicit axiomatization. Finally, the structure $\langle\mathbb{Q};<,\times\rangle$ is studied in this thesis (seemingly, for the first time). We show, by the method of quantifier elimination, that the theory of this structure is decidable. Here,   the \mbox{(super-)structure}  $\langle\mathbb{Q};+,\times\rangle$ is not usable since it is undecidable (proved by Robinson~\cite{robinson}; see also \cite[Theorem 8.30]{smorynski}). On the other hand its (sub-)structure $\langle\mathbb{Q};\times\rangle$ is decidable (proved in~\cite{mostowski} by Mostowski; see also~\cite{salehi-m}). So, the three structures  $\langle\mathbb{Q};+,\times\rangle$ and $\langle\mathbb{Q};<,\times\rangle$ and $\langle\mathbb{Q};\times\rangle$ are different from each other; the order relation  $<$ is not definable in $\langle\mathbb{Q};\times\rangle$ and the addition operation $+$ is not definable in $\langle\mathbb{Q};<,\times\rangle$ (by our results). 

\chapter{Some Preliminaries} 
\label{Chapter1} 


\section{Ordered Structures}
\begin{definition}[Ordered Structure]\rm\label{def-os}
An {\em ordered structure} is a triple $\langle A;<,\mathcal{L}\rangle$ in which $A$ is a non-empty set and $<$ is a binary relation on $A$ which satisfies the following axioms:
\begin{equation*}
\begin{tabular}{l}
($\texttt{O}_1$) \; $\forall x,y(x<y\rightarrow y\not<x)$,  \\ ($\texttt{O}_2$) \; $\forall x,y,z (x<y<z\rightarrow x<z),$ and \\
($\texttt{O}_3$) \; $\forall x,y (x<y \vee x=y \vee y<x)$;
\end{tabular}
\end{equation*}
\noindent
and $\mathcal{L}$ is a first-order language.
\qedef\end{definition}
Here, $\mathcal{L}$ could be empty, or any language, for example $\{+\}$ or $\{\times\}$ or $\{+,\times\}$.
\section{Various Types of Orders}

\begin{definition}[Dense Linear Order\label{dlo}]\rm
A linear  order relation $<$ is called {\em dense} if it satisfies
\begin{equation*}
\begin{tabular}{l}
($\texttt{O}_4$) \; $\forall x,y (x<y\rightarrow\exists z [x<z<y])$.
\end{tabular}
\end{equation*}
\qedef\end{definition}

\begin{definition}[Orders Without Endpoints\label{weo}]\rm
An order relation $<$ is called
{\em without endpoints} if it satisfies
\begin{equation*}
\begin{tabular}{l}
 ($\texttt{O}_5$) \; $\forall x\exists y (x<y),$  and  \\ ($\texttt{O}_6$) \; $\forall x\exists y (y<x)$.
\end{tabular}
\end{equation*}
\qedef\end{definition}

\begin{definition}[Discrete Order\label{dco}]\rm
A {\em discrete} order has the property that   any element has an immediate successor (i.e., there is no other element in between them). If the successor of $x$ is denoted by $\mathfrak{s}(x)$, then a discrete order satisfies
\begin{equation*}
\begin{tabular}{l}
($\texttt{O}_7$) \; $\forall x,y (x\!<\!y \;\leftrightarrow\; \mathfrak{s}(x)\!<\!y \vee \mathfrak{s}(x)\!=\!y)$.
\end{tabular}
\end{equation*}
\qedef\end{definition}

\begin{convention}\rm
The successor\label{su} of an integer $x$ is $\mathfrak{s}(x)=x+1$.
\qecon\end{convention}

\section{The Main Lemma of Quantifier Elimination}

\begin{definition}[Disjunctive Normal Form\label{djnf}]\rm
The disjunctive normal form of a formula is another formula such that (i) is equivalent to the original formula, and (ii) is the disjunction of some formulas each of wich is the conjunction of some atomic or negated-atomic formulas.
\qedef\end{definition}

\begin{remark}\rm\label{dnf}
Every quantifier-free formula can be written equivalently in disjunctive normal form by elimination of connectives other than $\{\vee, \wedge, \neg\}$ using DeMorgan's laws and the double negation rule, and distributing $\wedge$ over $\vee$, if any.
\qecon\end{remark}

The following lemma which is known as ``{\em The Main Lemma of Quantifier Elimination\label{e}}'',  has been proved in e.g.
\cite[Theorem 31F]{enderton},
\cite[Lemma 2.4.30]{hinman},
\cite[Theorem 1, Chapter 4]{kk},
\cite[Lemma 3.1.5]{marker}
and \cite[Lemma 4.1, Chapter \textrm{III}]{smorynski}.
\begin{lemma}[The Main Lemma of Quantifier Elimination]\rm\label{mainlem}
A theory (or a structure) admits quantifier elimination if and only if every formula of the form $\exists x (\bigwedge\hspace{-1.5ex}\bigwedge_{i}\alpha_i)$ is equivalent with  a quantifier-free formula, where each $\alpha_i$ is an atomic formula or the negation of an atomic formula.
\end{lemma}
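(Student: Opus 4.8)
The plan is to prove both directions of the biconditional, with the nontrivial direction being that the stated ``local'' elimination suffices for full quantifier elimination. The forward direction is immediate: if a theory admits quantifier elimination, then in particular every formula of the special form $\exists x(\bigwedge_i \alpha_i)$ is equivalent to a quantifier-free formula, since that is just a special case of an arbitrary formula. So the content is entirely in the converse.

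For the converse, I would argue by induction on the structure (the number of quantifiers, or more precisely the quantifier rank) of an arbitrary formula $\varphi$, showing it is equivalent to a quantifier-free one. The base case is a quantifier-free formula, which is trivially equivalent to itself. For the inductive step, take $\varphi$ and locate an innermost quantifier, i.e.\ a subformula $Qx\,\psi$ with $\psi$ quantifier-free; it suffices to eliminate this one quantifier and then invoke the induction hypothesis on the result. If $Q=\forall$, rewrite $\forall x\,\psi$ as $\neg\exists x\,\neg\psi$, so it is enough to handle the existential case. Now $\psi$ is quantifier-free, so by Remark~\ref{dnf} it can be put into disjunctive normal form, say $\psi\equiv\bigvee_j(\bigwedge_i\alpha_{ij})$ where each $\alpha_{ij}$ is atomic or negated atomic. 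Since $\exists x$ distributes over $\vee$, we get $\exists x\,\psi\equiv\bigvee_j \exists x(\bigwedge_i\alpha_{ij})$. By hypothesis each disjunct $\exists x(\bigwedge_i\alpha_{ij})$ is equivalent to a quantifier-free formula, hence so is the whole disjunction, and we have eliminated the chosen quantifier.

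Iterating this elimination from the inside out (equivalently, running the induction) removes all quantifiers from $\varphi$, proving that the theory (or structure) admits quantifier elimination. I expect the main point requiring care — though it is routine rather than deep — is the bookkeeping that the passage to disjunctive normal form and the distribution of $\exists$ over $\vee$ are truth-preserving in the theory, and that ``innermost quantifier'' is well-defined so that the induction is actually on a decreasing well-founded measure (quantifier rank). One should also note that the equivalences involved are logical equivalences (or provable in the theory), so the argument works uniformly whether one phrases it for a theory or for a single structure. No genuine obstacle is anticipated; the lemma is a standard reduction and the proof is essentially the inductive unwinding just described.
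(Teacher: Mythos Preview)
Your proposal is correct and follows essentially the same approach as the paper: reduce $\forall$ to $\exists$ via negation, put the quantifier-free matrix into disjunctive normal form (Remark~\ref{dnf}), distribute $\exists$ over $\vee$, and apply the hypothesis to each disjunct, all wrapped in an induction on formula complexity. The only cosmetic difference is that you phrase the induction via ``innermost quantifier'' and quantifier rank, whereas the paper speaks of induction on the complexity of $\varphi$; the content is the same.
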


\begin{proof}\rm
The ``only if'' part is obvious. We prove the ``if''  part by induction on the complexity of $\varphi$. The statement holds for quantifier-free formulas. So it suffices to check quantifiers: $\forall$ and $\exists$. By the equivalence $\forall x \varphi\equiv\neg\exists\neg\varphi$, the  universal quantifier is reducible to the existential quantifier.
Therefore, the quantifier elimination of the formula $\exists x\varphi$ suffices, where $\varphi$ is quantifier-free. Now, by Convention~\ref{dnf}, every quantifier-free formula can be written in the conjunctive normal form. So we have:
$$\exists x \varphi\equiv\exists x \bigvee\hspace{-2.1ex}\bigvee_{j}(\bigwedge\hspace{-2.1ex}\bigwedge_{i}\alpha_{i,j})
\equiv\bigvee\hspace{-2.1ex}\bigvee_{j}(\exists x (\bigwedge\hspace{-2.1ex}\bigwedge_{i}\alpha_{i,j}))$$
By the assumption, each formula
$\exists x (\bigwedge\hspace{-1.5ex}\bigwedge_{i}\alpha_{i,j})$
is equivalent with  a quantifier-free formula. So, the formula $\exists x\varphi$ is also equivalent with  a quantifier-free formula.
\qed\end{proof}

\begin{remark}\label{hn}\rm
In the presence of a linear order relation ($<$) by the two equivalences $(s\neq t) \leftrightarrow (s<t \vee t<s)$ and  $(s\not < t) \leftrightarrow (t<s\vee t=s)$, which follow from the axioms  $\{\texttt{O}_1,\texttt{O}_2,\texttt{O}_3\}$ (of Definition~\ref{def-os}), we do not need to consider the negated atomic formulas (when there is no   relation symbol other than $<,=$).
\qecon\end{remark} 

\chapter{Ordered Structures of Numbers} 
\label{Chapter2} 


\section{Axiomatizability and Quantifier Elimination}

\begin{definition}[Theory\label{ty}]\rm
A {\em theory} is a set of sentences which is closed under the logical deduction.
\qedef\end{definition}

\begin{definition}[Complete theory\label{cty}]\rm
A theory $T$ is said to be {\em complete} if for every sentence $\sigma$ either
$\sigma\in T$ or $(\neg\sigma)\in T$.
\qedef\end{definition}

\begin{remark}\rm\label{ct}
Since the theory of a structure is a set of sentences which are satisfied within that structure, this theory is complete.
\qecon\end{remark}

\begin{definition}[Decidable set\label{ds}]\rm
A set $A$ of expressions is {\em decidable} if and only if  there exists an effective procedure that,
given an expression $\alpha$, will decide whether or not $\alpha\in A$.\qedef\end{definition}

\begin{definition}[Effectively enumerable set\label{ees}]\rm
A set $A$ of expressions is {\em effectively enumerable} if and only if  there exists an effective procedure that lists, in some order, the members of $A$.
\qedef\end{definition}

\begin{definition}[Axiomatizability\label{ax}]\rm
The theory of a structure $\mathcal{A}=\langle A;\mathcal{L}\rangle$ is {\em axiomatizable} if and only if there exists a decidable set of $\mathcal{L}-$sentences such that the set of its logical consequences is equal to the theory of $\mathcal{A}$.
\qedef\end{definition}
$\bullet$
The structure $\mathcal{A}$ is {\em finitely axiomatizable\label{fa}} if the above set of sentences is finite.

\begin{pro}\label{ctd}\rm
For a finite or countable language:
\begin{itemize}
\item[(1)]
An axiomatizable theory is effectively enumerable.
\item[(2)]
A complete axiomatizable theory is decidable.
\end{itemize}
\end{pro}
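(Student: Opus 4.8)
The plan is to prove the two parts in sequence, using only the basic machinery of first-order logic: the fact that logical consequence is semi-decidable (G\"odel completeness / the existence of a proof system whose theorems we can mechanically enumerate), and the definition of completeness (Definition~\ref{cty}). For part (1), let $\Sigma$ be a decidable set of $\mathcal{L}$-sentences axiomatizing the theory $T$, so that $T=\{\sigma : \Sigma\vdash\sigma\}$. I would build an effective enumeration of $T$ as follows: systematically enumerate all finite derivations in a fixed complete proof calculus for first-order logic over the (finite or countable, hence effectively presentable) language $\mathcal{L}$; whenever a derivation is encountered whose premises all lie in $\Sigma$ --- this is checkable because $\Sigma$ is decidable --- output its conclusion. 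By completeness every $\sigma\in T$ has such a derivation, so it is eventually listed; and soundness guarantees nothing outside $T$ is ever listed. Hence $T$ is effectively enumerable (Definition~\ref{ees}). One small technical point to handle is that when $\mathcal{L}$ is merely countable one must fix a computable presentation of its symbols so that "formula'' and "derivation'' are decidable notions; I would simply assume, as is standard, that the language comes with such a presentation.

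For part (2), assume in addition that $T$ is complete. Given an arbitrary $\mathcal{L}$-sentence $\sigma$, I want an effective procedure deciding whether $\sigma\in T$. Run two enumerations of $T$ in parallel (or dovetail a single enumeration): by part (1) we can list $T$, and since $T$ is a theory it is closed under deduction, so $\neg\sigma$ is produced by the same enumeration exactly when $\neg\sigma\in T$. Wait for either $\sigma$ or $\neg\sigma$ to appear. By completeness, for every $\sigma$ exactly one of $\sigma\in T$, $\neg\sigma\in T$ holds, so this search terminates; and consistency of $T$ (which follows from $T$ being the theory closed under deduction of a consistent $\Sigma$, or simply from $T$ being the theory of a structure in the intended applications) ensures the two outcomes are mutually exclusive. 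If $\sigma$ appears first, answer "yes''; if $\neg\sigma$ appears first, answer "no''. This is an effective decision procedure, so $T$ is decidable (Definition~\ref{ds}).

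The only genuine subtlety --- and the step I would flag as the main obstacle, though it is more a matter of care than of difficulty --- is the appeal to completeness of a proof system for first-order logic, i.e. that $\{\sigma:\vdash\sigma\}$ is effectively enumerable. This is exactly G\"odel's completeness theorem, and I would cite it rather than reprove it; everything else is elementary dovetailing. A secondary point worth stating explicitly is that part (2) requires $T$ to be \emph{consistent} as well as complete, otherwise both $\sigma$ and $\neg\sigma$ would be enumerated and the procedure would be ill-defined --- in our setting $T$ is always the (consistent) theory of an actual structure, so this is automatic, but the proof should mention it.
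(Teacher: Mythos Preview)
Your argument is correct and is the standard one: enumerate proofs over a decidable axiom set to get effective enumerability, then dovetail on $\sigma$ versus $\neg\sigma$ using completeness to get decidability. The paper, however, does not give its own proof at all; it simply cites Enderton's \emph{A Mathematical Introduction to Logic}, Corollaries~25F and~25G. So there is nothing substantive to compare: you have supplied exactly the textbook argument that the paper defers to, together with sensible remarks about the need for a computable presentation of a countable language and the implicit consistency assumption in part~(2).
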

\begin{proof}\rm
These results have been proved in e.g. \cite[Corollaries 25F and 25G]{enderton}.
\qed\end{proof}

\begin{remark}\rm
By Remark~\ref{ct} and Proposition~\ref{ctd} the theory of an axiomatizable structure is decidable.
\qedef\end{remark}

\begin{definition}\rm
(The theory of) A structure $\mathcal{A}=\langle A;\mathcal{L}\rangle$ {\em admits quantifier elimination} if and only if every formula in the language $\mathcal{L}$
is equivalent to a quantifier-free formula in the same language with the same free variables.
\qedef\end{definition}
$\bullet$
Since every atom can be proved or disproved, so can the quantifier-free sentences. Whence, the {\em Quantifier Elimination Algorithm} is in fact a {\em Decision Algorithm\label{da}}. 

\noindent
$\bullet$
Here, we have presented axiomatizations for structures and have eliminated the quantifiers of their theories. Whence, axiomatizability and decidability of the structures are proved this way.
\subsection{Finite Axiomatizability of $\langle\mathbb{R};<\rangle$ and $\langle\mathbb{Q};<\rangle$}

\begin{convention}\rm
The axioms of The Finite Theory of Dense Linear Orders Without Endpoints are as follows:
\begin{equation*}
\begin{tabular}{l}
($\texttt{O}_1$) \; $\forall x,y(x<y\rightarrow y\not<x)$ \\
($\texttt{O}_2$) \; $\forall x,y,z (x<y<z\rightarrow x<z)$ \\
($\texttt{O}_3$) \; $\forall x,y (x<y \vee x=y \vee y<x)$ \\
($\texttt{O}_4$) \; $\forall x,y (x<y\rightarrow\exists z [x<z<y])$\\
($\texttt{O}_5$) \; $\forall x\exists y (x<y)$\\
($\texttt{O}_6$) \; $\forall x\exists y (y<x)$
\end{tabular}
\end{equation*}
\qecon\end{convention}

The following theorem has been proved in
\cite[Theorems 2.4.1 and 3.1.3]{marker}.

$\bullet$
 Here, we present a syntactic (proof-theoretic) proof.

 \begin{theorem}\rm\label{thm-o}
The  finite theory of dense linear orders without endpoints \textup{(}with the axioms  $\{\texttt{O}_1,\texttt{O}_2,\texttt{O}_3,\texttt{O}_4,\texttt{O}_5,\texttt{O}_6\}$\textup{)}
completely axiomatizes the order theory of the   real and rational  numbers and, moreover, the structures $\langle\mathbb{R};<\rangle$ and $\langle\mathbb{Q};<\rangle$ admit quantifier elimination, and so their theories are decidable.
\end{theorem}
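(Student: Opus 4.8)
The plan is to verify quantifier elimination for the theory $\mathrm{DLO}=\{\texttt{O}_1,\texttt{O}_2,\texttt{O}_3,\texttt{O}_4,\texttt{O}_5,\texttt{O}_6\}$ by means of Lemma~\ref{mainlem}, and then read off both the completeness and the decidability claims. By that lemma it is enough to eliminate the leading quantifier from a formula $\exists x\,(\bigwedge_i\alpha_i)$ in which each $\alpha_i$ is atomic or negated atomic. The language has only the binary symbol $<$ besides $=$, so Remark~\ref{hn} lets us drop the negated atoms; and there are no function or constant symbols, so every term is a variable. Hence each $\alpha_i$ is one of $x<x$, $x=x$, $x<y_k$, $y_k<x$, $x=y_k$, or an atom $y_k<y_\ell$ or $y_k=y_\ell$ in which $x$ does not occur.

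First I would clear away the easy cases. A conjunct $x=x$ is simply deleted; if some $\alpha_i$ is $x<x$ the whole conjunction is $\mathrm{DLO}$-refutable by $\texttt{O}_1$, so $\exists x(\bigwedge_i\alpha_i)$ is equivalent to a fixed quantifier-free falsity; and if some $\alpha_i$ is $x=y_k$, substituting $y_k$ for $x$ throughout produces an $x$-free conjunction in front of which the quantifier is vacuous. In the one remaining case the conjunction has the form $\bigwedge_i(a_i<x)\wedge\bigwedge_j(x<b_j)\wedge\chi$, where $\chi$ gathers the conjuncts not mentioning $x$; since $x$ is absent from $\chi$ we only need to handle $\exists x\big(\bigwedge_i a_i<x\wedge\bigwedge_j x<b_j\big)$. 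I claim this is $\mathrm{DLO}$-provably equivalent to $\bigwedge_{i,j}(a_i<b_j)$, under the conventions that an empty conjunction is $\top$ and that if no $a_i$ (or no $b_j$) occurs the claimed equivalent is $\top$. The forward direction is immediate from transitivity ($\texttt{O}_2$). For the converse one argues within $\mathrm{DLO}$: by trichotomy ($\texttt{O}_3$) the finitely many $a_i$ admit a greatest element $a$ and the finitely many $b_j$ a least element $b$; from $\bigwedge_{i,j}a_i<b_j$ one gets $a<b$, density ($\texttt{O}_4$) yields a witness $x$ with $a<x<b$, and $\texttt{O}_2$ propagates $a_i<x<b_j$ for all $i,j$. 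If only lower bounds $a_i$ are present, use $\texttt{O}_5$ to pick $x$ above the greatest of them; if only upper bounds, use $\texttt{O}_6$; if there are neither, the non-emptiness of the domain (Definition~\ref{def-os}) furnishes the witness. Putting the cases together, every $\exists x(\bigwedge_i\alpha_i)$ is $\mathrm{DLO}$-equivalent to a quantifier-free formula with the same free variables, so by Lemma~\ref{mainlem} the theory $\mathrm{DLO}$ admits quantifier elimination, and therefore so do its models $\langle\mathbb{R};<\rangle$ and $\langle\mathbb{Q};<\rangle$.

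Completeness then follows by a short remark: over a language with no constant or function symbols, every atomic formula has a free variable, so a quantifier-free \emph{sentence} contains no atom at all and is therefore propositionally equivalent to $\top$ or to $\bot$, the former being $\mathrm{DLO}$-provable and the latter $\mathrm{DLO}$-refutable. Hence for an arbitrary sentence $\sigma$, quantifier elimination reduces $\sigma$ (over $\mathrm{DLO}$) to such a trivial sentence, and $\mathrm{DLO}$ proves $\sigma$ or proves $\neg\sigma$; thus $\mathrm{DLO}$ is complete, and since $\mathbb{R}$ and $\mathbb{Q}$ are models of it, this single complete theory equals both $\mathrm{Th}\langle\mathbb{R};<\rangle$ and $\mathrm{Th}\langle\mathbb{Q};<\rangle$. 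Decidability is now immediate: being quantifier-free-decidable, $\mathrm{DLO}$ is decided by the quantifier-elimination algorithm (equivalently, it is complete and finitely, hence recursively, axiomatizable, so Proposition~\ref{ctd}(2) applies).

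The step I expect to be the main obstacle is the non-degenerate case of the quantifier step: turning the semantic picture ``choose a point strictly between the largest lower bound and the smallest upper bound'' into a genuinely syntactic derivation forces a careful use of $\texttt{O}_3$ to linearly order the two finite blocks of parameters, the isolation of their extreme members, and exactly one appeal to density — all while keeping the empty-block subcases ($\texttt{O}_5$, $\texttt{O}_6$, and the bare non-emptiness of the domain) separate, and while deciding once and for all how to name $\top$ and $\bot$ by honest quantifier-free formulas when free variables are at hand (and as primitives when, for sentences, they are not).
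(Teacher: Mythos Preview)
Your proposal is correct and follows essentially the same route as the paper: reduce via Lemma~\ref{mainlem} and Remark~\ref{hn} to a conjunction of atoms, substitute when an equality $x=u_k$ is present, and otherwise replace $\exists x(\bigwedge_i a_i<x\wedge\bigwedge_j x<b_j)$ by $\bigwedge_{i,j}a_i<b_j$ (or $\top$ in the degenerate cases) using $\texttt{O}_4,\texttt{O}_5,\texttt{O}_6$. You are in fact a bit more careful than the paper in explicitly isolating the $x$-free conjuncts $\chi$ and in spelling out the passage from quantifier elimination to completeness and decidability.
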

\begin{proof}\rm
By Remark~\ref{hn}, all the atomic formulas are either of the form $u<v$ or $u=v$ for some variables $u$ and $v$. If both of the variables are equal then $u<u$ is equivalent with $\bot$  by $\texttt{O}_1$ and $u=u$ is equivalent with $\top$.
So, by Lemma~\ref{mainlem},  it suffices to eliminate the quantifier of the formulas of the form
\begin{equation}\label{f-1}
\exists x (\bigwedge\hspace{-2.15ex}\bigwedge_{i<\ell} y_i<x \wedge \bigwedge\hspace{-2.55ex}\bigwedge_{j<m} x<z_j \wedge \bigwedge\hspace{-2.35ex}\bigwedge_{k<n} x=u_k)
\end{equation}
where $y_i$'s, $z_j$'s and $u_k$'s are variables. \\
Now, if $n\neq 0$ then the formula~\eqref{f-1} is equivalent with the quantifier-free formula
\begin{equation*}
\bigwedge\hspace{-2.15ex}\bigwedge_{i<\ell} y_i<u_0 \wedge \bigwedge\hspace{-2.55ex}\bigwedge_{j<m} u_0<z_j \wedge \bigwedge\hspace{-2.35ex}\bigwedge_{k<n} u_0=u_k.
\end{equation*}
So, let us suppose that $n=0$. Then if $\ell=0$ or $m=0$, the formula~\eqref{f-1} is equivalent with the quantifier-free formula $\top$, by the axioms   $\texttt{O}_5$ and $\texttt{O}_6$  (with $\texttt{O}_2$ and $\texttt{O}_3$) respectively,  and if $\ell,m\neq 0$, it is equivalent with the quantifier-free formula $\bigwedge\hspace{-1.55ex}\bigwedge_{i<\ell,j<m} y_i<z_j$ by the axiom  $\texttt{O}_4$ (with $\texttt{O}_2$ and $\texttt{O}_3$).
\qed\end{proof}

\begin{cor}\rm
In fact, for any set $A$ such that $\mathbb{Q}\subseteq A\subseteq\mathbb{R}$, the structure $\langle A;<\rangle$ can be completely axiomatized by the finite set of axioms $\{\texttt{O}_1,\texttt{O}_2,\texttt{O}_3,\texttt{O}_4,
\texttt{O}_5,\texttt{O}_6\}$.
\qed\end{cor}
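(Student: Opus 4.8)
The plan is to reduce the statement entirely to the verification that $\langle A;<\rangle$ is a model of the six axioms $\{\texttt{O}_1,\texttt{O}_2,\texttt{O}_3,\texttt{O}_4,\texttt{O}_5,\texttt{O}_6\}$, and then to quote the completeness half of Theorem~\ref{thm-o}. Recall that a complete theory proves or refutes every sentence; since Theorem~\ref{thm-o} establishes (via quantifier elimination) that the finite theory of dense linear orders without endpoints is complete, any structure satisfying these six axioms automatically has its full theory equal to the set of logical consequences of $\{\texttt{O}_1,\dots,\texttt{O}_6\}$. Hence the only thing left to do is to check that $\langle A;<\rangle\models\{\texttt{O}_1,\dots,\texttt{O}_6\}$ for every $A$ with $\mathbb{Q}\subseteq A\subseteq\mathbb{R}$.

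First I would dispose of the linear-order axioms $\texttt{O}_1,\texttt{O}_2,\texttt{O}_3$: these are universal $\{<\}$-sentences valid in $\langle\mathbb{R};<\rangle$, hence valid in every substructure, in particular in $\langle A;<\rangle$ for any $A\subseteq\mathbb{R}$. No use of the hypothesis $\mathbb{Q}\subseteq A$ is needed here.

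Next come the density axiom $\texttt{O}_4$ and the no-endpoints axioms $\texttt{O}_5,\texttt{O}_6$, which is the only place the hypothesis $\mathbb{Q}\subseteq A$ enters. For $\texttt{O}_4$: given $a,b\in A$ with $a<b$, density of $\mathbb{Q}$ in $\mathbb{R}$ yields a rational $q$ with $a<q<b$, and $q\in\mathbb{Q}\subseteq A$, so the witness lies in $A$. For $\texttt{O}_5$ and $\texttt{O}_6$: for any $a\in A\subseteq\mathbb{R}$ there are rationals $q_1<a<q_2$, and again $q_1,q_2\in A$, so $A$ has neither a least nor a greatest element. I expect this to be essentially the entire content of the proof, and it is a one-line appeal to the order-density and cofinality/coinitiality of $\mathbb{Q}$ in $\mathbb{R}$; I do not anticipate a genuine obstacle.

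Finally I would assemble the pieces: $\langle A;<\rangle$ is a model of the complete theory axiomatized by $\{\texttt{O}_1,\dots,\texttt{O}_6\}$, so its theory coincides with the deductive closure of those axioms; equivalently, the quantifier-elimination argument carried out in the proof of Theorem~\ref{thm-o} uses only these six axioms and therefore applies verbatim to $\langle A;<\rangle$, giving quantifier elimination (and hence decidability) for $\langle A;<\rangle$ as well. The one point to state carefully is that completeness of the axiom set was already obtained in Theorem~\ref{thm-o} and need not be re-established for each $A$; everything else is routine.
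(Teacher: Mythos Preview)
Your proposal is correct and matches the paper's intent: the paper gives no explicit proof for this corollary (it is closed immediately with a \qed), treating it as an immediate consequence of Theorem~\ref{thm-o}. Your argument---verifying that $\langle A;<\rangle$ satisfies $\texttt{O}_1$--$\texttt{O}_6$ using the density and cofinality of $\mathbb{Q}$ in $\mathbb{R}$, and then invoking the completeness established in Theorem~\ref{thm-o}---is exactly the implicit justification and fills in the details the paper omits.
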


\subsection{Finite Axiomatizability of $\langle\mathbb{Z};<\rangle$}
\begin{pro}\rm
The theory of the structure $\langle\mathbb{Z};<\rangle$ does not admit quantifier elimination.
\end{pro}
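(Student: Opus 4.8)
The plan is to refute quantifier elimination by exhibiting one formula with two free variables that is provably \emph{not} equivalent, over $\langle\mathbb{Z};<\rangle$, to any quantifier-free formula. The natural candidate is the ``betweenness'' formula
\[
\varphi(x,z)\;\equiv\;\exists y\,(x<y\wedge y<z),
\]
which in $\langle\mathbb{Z};<\rangle$ asserts that some integer lies strictly between $x$ and $z$ (informally, that $z$ exceeds $x$ by at least $2$) --- a condition that, intuitively, cannot be expressed by $<$ and $=$ alone.

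First I would pin down the shape of quantifier-free formulas in the free variables $x,z$. In the language $\{<\}$ the only atomic formulas in these variables are $x<x,\ z<z,\ x<z,\ z<x,\ x=x,\ z=z,\ x=z$; the first two are refutable and $x=x,\ z=z$ are provable from $\texttt{O}_1$, so --- in the spirit of Remark~\ref{hn} --- every quantifier-free formula in $x,z$ is (provably equivalent to) a Boolean combination of the three atoms $x<z$, $z<x$, $x=z$. Hence any two integer pairs that satisfy the same atoms among these three satisfy exactly the same quantifier-free formulas with free variables among $x,z$.

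Next I would take the pairs $(x,z)=(0,1)$ and $(x,z)=(0,2)$ as distinguishing witnesses. Both satisfy $x<z$ and falsify $z<x$ and $x=z$, so by the previous paragraph they agree on every quantifier-free formula in $x,z$. But $\langle\mathbb{Z};<\rangle\models\neg\varphi(0,1)$, since no integer lies strictly between $0$ and $1$, whereas $\langle\mathbb{Z};<\rangle\models\varphi(0,2)$, witnessed by $y=1$. If $\varphi$ were equivalent over $\langle\mathbb{Z};<\rangle$ to some quantifier-free $\psi(x,z)$, then $\psi$ would hold at $(0,2)$ and fail at $(0,1)$, contradicting that $\psi$ takes the same truth value at both pairs. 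Therefore $\varphi$ has no quantifier-free equivalent, and the theory of $\langle\mathbb{Z};<\rangle$ does not admit quantifier elimination.

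There is no genuine obstacle here; the only point needing care is the bookkeeping in the second step --- making sure the enumeration of atomic formulas in the two relevant variables is complete (in particular not omitting the equality atoms), so that ``same atoms $\Rightarrow$ same quantifier-free formula'' is valid. One could instead argue via partial isomorphisms (automorphisms) of $\langle\mathbb{Z};<\rangle$ moving $1$ off the interval $(0,2)$, but the explicit two-point computation above is shorter and self-contained.
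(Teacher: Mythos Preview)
Your proof is correct and follows essentially the same approach as the paper: both use the betweenness formula $\exists y\,(x<y<z)$, enumerate the atomic formulas in the two free variables, and argue that no Boolean combination of those atoms can capture it. Your version is slightly more explicit than the paper's---you include the equality atom $x=z$ (which the paper omits from its list, though by trichotomy it is redundant) and exhibit the concrete distinguishing pairs $(0,1)$ and $(0,2)$---but the underlying argument is the same.
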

\begin{proof}\rm
We show that the formula $\exists x (y<x<z)$ is not equivalent with any quantifier-free formula in the language $\{<\}$ (note that it is not equivalent with $y<z$): all the atomic formulas with the free variables $y$ and $z$ are $y<z$, $z<y$, $y=y(\equiv\top)$, $z=z(\equiv\top)$, $y<y(\equiv\bot)$ and $z<z(\equiv\bot)$. None of the propositional compositions of these formulas can be equivalent to  the formula $\exists x (y<x<z)$.
\qed\end{proof}

\begin{remark}\rm
If we add the successor operation $\mathfrak{s}$ to the language, we will have:
$$\exists x (y<x<z)\iff\mathfrak{s}(y)<z,$$
and we will show that the process of quantifier elimination will go through in this language [Theorem~\ref{thm-oz}].
\qecon\end{remark}

\begin{convention}\rm
The axioms of The  Finite  Theory of Discrete  Linear Orders Without Endpoints are as follows:
\begin{equation*}
\begin{tabular}{l}
($\texttt{O}_1$) \; $\forall x,y(x<y\rightarrow y\not<x)$ \\
($\texttt{O}_2$) \; $\forall x,y,z (x<y<z\rightarrow x<z)$ \\
($\texttt{O}_3$) \; $\forall x,y (x<y \vee x=y \vee y<x)$ \\
($\texttt{O}_7$) \; $\forall x,y (x\!<\!y \;\leftrightarrow\; \mathfrak{s}(x)\!<\!y \vee \mathfrak{s}(x)\!=\!y)$\\
($\texttt{O}_8$) \; $\forall x\exists y (\mathfrak{s}(y)=x)$
\end{tabular}
\end{equation*}
\qecon\end{convention}
$\bullet$
The following has been proved earlier; see ~\cite[Theorem 2.12]{rz}.
\begin{theorem}\rm\label{thm-oz}
The  finite  theory of discrete  linear orders without endpoints, consisting of the axioms $\{\texttt{O}_1,\texttt{O}_2,\texttt{O}_3,\texttt{O}_7,\texttt{O}_8\}$, completely axiomatizes the order theory of the   integer   numbers and, moreover, the structure $\langle\mathbb{Z};<,\mathfrak{s}\rangle$ admits quantifier elimination, and so its theory is decidable.
\end{theorem}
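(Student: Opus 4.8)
The plan is to run the quantifier-elimination argument of Theorem~\ref{thm-o}, adjusted for the successor symbol $\mathfrak{s}$. By Remark~\ref{hn} and Lemma~\ref{mainlem} it is enough to eliminate the quantifier from a formula $\exists x\,(\bigwedge_{i}\alpha_{i})$ with each $\alpha_{i}$ atomic. Since the language $\{<,\mathfrak{s}\}$ has no constant symbols, every term is $\mathfrak{s}^{k}(v)$ for a variable $v$ and some $k\geq 0$, so every $\alpha_{i}$ has one of the forms $\mathfrak{s}^{a}(u)<\mathfrak{s}^{b}(w)$ or $\mathfrak{s}^{a}(u)=\mathfrak{s}^{b}(w)$.

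First I would extract from the axioms $\{\texttt{O}_{1},\texttt{O}_{2},\texttt{O}_{3},\texttt{O}_{7},\texttt{O}_{8}\}$ the facts that drive the normalization: (i) $\mathfrak{s}$ is strictly order-preserving and injective, i.e.\ $p<q\leftrightarrow\mathfrak{s}(p)<\mathfrak{s}(q)$ and $p=q\leftrightarrow\mathfrak{s}(p)=\mathfrak{s}(q)$ (the forward implications are immediate from $\texttt{O}_{7}$, the backward ones follow from $\texttt{O}_{1}$--$\texttt{O}_{3}$); (ii) $p<\mathfrak{s}(p)$ and nothing lies strictly between $p$ and $\mathfrak{s}(p)$ (from $\texttt{O}_{7}$ with $\texttt{O}_{1},\texttt{O}_{2}$); and (iii) the order has no endpoints (unbounded above by (ii), unbounded below by $\texttt{O}_{8}$), and, iterating $\texttt{O}_{8}$ together with the injectivity in (i), each $\mathfrak{s}^{m}$ is a bijection on every model. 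Using (i) I can cancel a common power of $\mathfrak{s}$ from the two sides of any atomic formula; in particular any atomic formula in which $x$ occurs on both sides reduces, via (i) and (ii), to $\top$ or $\bot$.

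The core reduction is then as follows. Pull the $x$-free conjuncts outside $\exists x$, discard the trivial $x$-on-both-sides conjuncts, and use (i) to raise every remaining occurrence $\mathfrak{s}^{a}(x)$ to a single common power $\mathfrak{s}^{M}(x)$ (replacing e.g.\ $\mathfrak{s}^{a}(x)<\mathfrak{s}^{b}(v)$ by $\mathfrak{s}^{M}(x)<\mathfrak{s}^{b+M-a}(v)$). Because $\mathfrak{s}^{M}$ is a bijection by (iii), $\exists x$ of a formula mentioning $x$ only through $\mathfrak{s}^{M}(x)$ is equivalent to $\exists y$ of the formula with a fresh variable $y$ in place of $\mathfrak{s}^{M}(x)$. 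Now there are two cases. If some conjunct is an equality $y=t$ (with $t$ not containing $y$), substitute $y:=t$ and drop the quantifier, yielding a quantifier-free formula. Otherwise all conjuncts in $y$ are strict inequalities and the formula has the shape $\exists y\,\big(\bigwedge_{i}L_{i}<y\ \wedge\ \bigwedge_{j}y<U_{j}\big)$; here I would prove the discrete analogue of the density step of Theorem~\ref{thm-o}, namely that this formula is equivalent to $\bigwedge_{i,j}\mathfrak{s}(L_{i})<U_{j}$ (an empty $L$- or $U$-side making it $\top$, by the absence of endpoints): for ($\Leftarrow$) take $y=\mathfrak{s}(L_{i_{0}})$ with $L_{i_{0}}$ maximal among the $L_{i}$, and for ($\Rightarrow$) use $\texttt{O}_{7}$ to pass from $L_{i}<y$ to $\mathfrak{s}(L_{i})\leq y<U_{j}$. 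The resulting formula is quantifier-free, so by Lemma~\ref{mainlem} the structure $\langle\mathbb{Z};<,\mathfrak{s}\rangle$ admits quantifier elimination. Finally, since there are no closed terms there are no atomic sentences, so every quantifier-free sentence is (provably) $\top$ or $\bot$; hence quantifier elimination makes the axiom set complete, and a complete, recursively axiomatized theory that holds in $\mathbb{Z}$ equals $\mathrm{Th}\langle\mathbb{Z};<,\mathfrak{s}\rangle$ and is decidable by Proposition~\ref{ctd}.

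The main obstacle, relative to the dense case, is precisely the function symbol: one cannot simply ``solve for $x$'' when $x$ occurs under several different powers of $\mathfrak{s}$, because the language contains no predecessor symbol. The trick of lifting all occurrences to a common power $\mathfrak{s}^{M}(x)$ and then renaming $\mathfrak{s}^{M}(x)$ as a fresh variable, justified by the bijectivity of $\mathfrak{s}^{M}$ that $\texttt{O}_{8}$ and injectivity provide, is what circumvents it; and verifying the discrete gap equivalence $\exists y\,(\bigwedge_{i}L_{i}<y\wedge\bigwedge_{j}y<U_{j})\leftrightarrow\bigwedge_{i,j}\mathfrak{s}(L_{i})<U_{j}$ from the axioms is the one genuinely new computation beyond the proof of Theorem~\ref{thm-o}.
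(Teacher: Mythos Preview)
Your proposal is correct and follows essentially the same route as the paper: normalize all occurrences of $x$ to a common power $\mathfrak{s}^{M}(x)$ using the monotonicity/injectivity of $\mathfrak{s}$, invoke $\texttt{O}_{8}$ to replace $\mathfrak{s}^{M}(x)$ by a fresh variable, dispatch the equality case by substitution, and reduce the pure-inequality case to $\bigwedge_{i,j}\mathfrak{s}(L_{i})<U_{j}$ via $\texttt{O}_{7}$. You are a bit more explicit than the paper in treating the empty-$L$/empty-$U$ subcases and in spelling out why completeness and decidability follow, but the argument is the same.
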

\begin{proof}\rm
We  note that all the terms in the language $\{<,\mathfrak{s}\}$ are of the form $\mathfrak{s}^n(y)$ for some variable $y$ and $n\in\mathbb{N}$. So, by Remark~\ref{hn}, all the atomic formulas are of the form    $\mathfrak{s}^n(u)=\mathfrak{s}^m(v)$ or $\mathfrak{s}^n(u)<\mathfrak{s}^m(v)$, for some variables $u,v$. If a variable $x$ appears in the both sides of an atomic formula, then we have either $\mathfrak{s}^n(x)=\mathfrak{s}^m(x)$ or $\mathfrak{s}^n(x)<\mathfrak{s}^m(x)$. The formula $\mathfrak{s}^n(x)=\mathfrak{s}^m(x)$ is equivalent with $\top$ when $n=m$ and with $\bot$ otherwise; also $\mathfrak{s}^n(x)<\mathfrak{s}^m(x)$ is equivalent with $\top$ when $n<m$ and with $\bot$ otherwise. So, it suffices to consider the atomic formulas of the form  $t<\mathfrak{s}^n(x)$ or $\mathfrak{s}^n(x)<t$ or $\mathfrak{s}^n(x)=t$, for some $x$-free term $t$ and $n\in\mathbb{N}^+$. Now, by Lemma~\ref{mainlem}, we eliminate the quantifier of the following formulas
\begin{equation}\label{f-2}
\exists x (\bigwedge\hspace{-2.15ex}\bigwedge_{i<\ell} t_i<\mathfrak{s}^{p_i}(x) \wedge \bigwedge\hspace{-2.55ex}\bigwedge_{j<m} \mathfrak{s}^{q_j}(x) <s_j \wedge \bigwedge\hspace{-2.35ex}\bigwedge_{k<n} \mathfrak{s}^{r_k}(x)=u_k).
\end{equation}
The axiom $\texttt{O}_7$ proves $[a<b] \leftrightarrow [\mathfrak{s}(a)<\mathfrak{s}(b)]$ and $[a=b] \leftrightarrow [\mathfrak{s}(a)=\mathfrak{s}(b)]$; so
we can assume that $p_i$'s and $q_j$'s and $r_k$'s in
the formula~\eqref{f-2} are equal to each other, say to $\alpha$. Then, by $\texttt{O}_8$, the formula~\eqref{f-2} is equivalent with
\begin{equation}\label{f-3}
\exists y (\bigwedge\hspace{-2.15ex}\bigwedge_{i<\ell} t_i'<y \wedge \bigwedge\hspace{-2.55ex}\bigwedge_{j<m} y<s_j' \wedge \bigwedge\hspace{-2.35ex}\bigwedge_{k<n} y=u_k'),
\end{equation}
for some (possibly new)  terms $t_i',s_j',u_k'$ (and   $y=\mathfrak{s}^\alpha(x)$).\\
Now, if $n\neq 0$, then the formula \eqref{f-3} is equivalent with the quantifier-free formula
\begin{equation*}
\bigwedge\hspace{-2.15ex}\bigwedge_{i<\ell} t_i'<u_0' \wedge \bigwedge\hspace{-2.55ex}\bigwedge_{j<m} u_0'<s_j' \wedge \bigwedge\hspace{-2.35ex}\bigwedge_{k<n} u_0'=u_k'.
\end{equation*}
Let us then assume that $n=0$. The formula
\begin{equation}\label{f-4}
\exists x (\bigwedge\hspace{-2.15ex}\bigwedge_{i<\ell} t_i<x \wedge \bigwedge\hspace{-2.55ex}\bigwedge_{j<m} x<s_j)
\end{equation}
is equivalent with the quantifier-free formula
$$\bigwedge\hspace{-2ex}\bigwedge_{i,j} \mathfrak{s}(t_i)<s_j$$
by the axiom $\texttt{O}_7$.
\qed\end{proof}

\subsection{Finite Axiomatizability of $\langle\mathbb{N};<\rangle$}

\begin{pro}\rm
The theory of the structure $\langle\mathbb{N};<\rangle$ does not admit quantifier elimination.
\end{pro}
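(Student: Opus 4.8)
The plan is to exhibit a single formula with one free variable that defines a nontrivial subset of $\mathbb{N}$ and then to show that no quantifier-free formula in the language $\{<\}$ can define the same set. The natural candidate exploits the feature that distinguishes $\mathbb{N}$ from $\mathbb{Z}$, namely the existence of a least element: take
\[
\varphi(y)\;\equiv\;\exists x\,(x<y).
\]
The first step is the easy observation that $\langle\mathbb{N};<\rangle\models\varphi(n)$ holds precisely when $n\neq 0$, so $\varphi$ defines the set $\mathbb{N}\setminus\{0\}$, which is neither empty nor all of $\mathbb{N}$.

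The second step is to check that every quantifier-free formula $\psi(y)$ in the language $\{<\}$ whose only free variable is $y$ is equivalent, in $\langle\mathbb{N};<\rangle$, to $\top$ or to $\bot$. By Remark~\ref{hn} it suffices to look at atomic formulas, and the only atomic formulas built from the single variable $y$ are $y<y$ and $y=y$; the former is equivalent to $\bot$ by $\texttt{O}_1$ and the latter is equivalent to $\top$. Hence $\psi(y)$ is a propositional combination of $\top$ and $\bot$, so it is equivalent to $\top$ or to $\bot$, and therefore defines either $\emptyset$ or $\mathbb{N}$.

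Putting the two computations side by side, $\varphi(y)$ cannot be equivalent to any quantifier-free $\psi(y)$, and so $\langle\mathbb{N};<\rangle$ does not admit quantifier elimination. There is no genuine obstacle here; the only point demanding a little care is the bookkeeping in the second step — confirming that the list of atomic (and negated atomic) formulas in one free variable is really exhausted by the two trivial ones — which is exactly what Remark~\ref{hn} together with axiom $\texttt{O}_1$ provides. (One could instead mimic the argument used for $\langle\mathbb{Z};<\rangle$ in Theorem~\ref{thm-oz} with the formula $\exists x\,(y<x<z)$, but the one-variable formula above is shorter and it highlights precisely why $\mathbb{N}$ fails where the later successor-enriched structures succeed.)
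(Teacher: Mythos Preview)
Your proof is correct and follows essentially the same approach as the paper: both exhibit a one-variable formula defining $\mathbb{N}\setminus\{0\}$ and then observe that the only atomic formulas in the single variable $y$ are $y<y\equiv\bot$ and $y=y\equiv\top$, so every quantifier-free formula defines $\emptyset$ or $\mathbb{N}$. The paper happens to write its witness as $\exists x(\mathfrak{s}(x)=y)$ while you use $\exists x(x<y)$; your choice is arguably cleaner since it stays strictly inside the language $\{<\}$, but the two formulas define the same set and the argument is otherwise identical.
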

\begin{proof}\rm
We show that the formula $\exists x (\mathfrak{s}(x)=y)$ is not equivalent with any quantifier-free formula. All the atomic formulas with the free variable $y$ are either of the form $y<y$ or $y=y$. The equivalences $(y<y)\equiv\bot$  and $(y=y)\equiv\top$ show that none of the propositional compositions of them can be equivalent to   $\exists x (\mathfrak{s}(x)=y)$, because its truth depends on $y$ \textup{(}it is equivalent with $\bot$ for $y=0$ and with $\top$ otherwise\textup{)}.
\qed\end{proof}

\begin{remark}\rm
By adding the constant $\bf 0$ to the language $\{<\}$ we will have:
$$\exists x (x<y)\iff\texttt {\bf 0}<y.$$
Still quantifier elimination is not possible [Proposition~\ref{hs1}, below].
\qecon\end{remark}

\begin{pro}\rm\label{hs1}
The theory of the structure $\langle\mathbb{N};<,\bf 0\rangle$ does not admit quantifier elimination.
\end{pro}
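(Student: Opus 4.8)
The theory of the structure $\langle\mathbb{N};<,\mathbf{0}\rangle$ does not admit quantifier elimination.

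The plan is to refute quantifier elimination by exhibiting a single first-order formula with one free variable whose solution set in $\langle\mathbb{N};<,\mathbf{0}\rangle$ is not the solution set of any quantifier-free formula of the language $\{<,\mathbf{0}\}$ with that same free variable. First I would catalogue what quantifier-free formulas in one free variable $y$ can express. The only terms of the language are variables and the constant $\mathbf{0}$, so every atomic formula built from $y$ and $\mathbf{0}$ lies among $y\!=\!y$, $\mathbf{0}\!=\!\mathbf{0}$, $y\!=\!\mathbf{0}$, $y\!<\!y$, $\mathbf{0}\!<\!\mathbf{0}$, $y\!<\!\mathbf{0}$, $\mathbf{0}\!<\!y$. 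Over the theory of the structure these collapse drastically: using $\{\texttt{O}_1,\texttt{O}_2,\texttt{O}_3\}$ together with the valid sentence $\forall x(\mathbf{0}\!<\!x \vee \mathbf{0}\!=\!x)$ expressing that $\mathbf{0}$ is the least element, one gets $y\!=\!y\equiv\mathbf{0}\!=\!\mathbf{0}\equiv\top$, then $y\!<\!y\equiv\mathbf{0}\!<\!\mathbf{0}\equiv y\!<\!\mathbf{0}\equiv\bot$, and finally $\mathbf{0}\!<\!y\equiv\neg(y\!=\!\mathbf{0})$. Since the Boolean algebra generated by the single surviving atom $y\!=\!\mathbf{0}$ has only four elements, every quantifier-free formula with free variable $y$ is equivalent, modulo the theory, to one of $\bot$, $\top$, $y\!=\!\mathbf{0}$, $y\!\neq\!\mathbf{0}$, whose solution sets in $\mathbb{N}$ are $\varnothing$, $\mathbb{N}$, $\{0\}$, $\mathbb{N}\setminus\{0\}$.

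Next I would produce the witnessing formula $\varphi(y) \;:=\; \exists x(\mathbf{0}\!<\!x \wedge x\!<\!y)$, which asserts the existence of an element strictly between $\mathbf{0}$ and $y$; in $\mathbb{N}$ it is satisfied exactly by $y\in\{2,3,4,\dots\}$. This set is none of $\varnothing$, $\mathbb{N}$, $\{0\}$, $\mathbb{N}\setminus\{0\}$: indeed $\varphi$ is false at $y=0$ and false at $y=1$ but true at $y=2$, which already separates it from all four options. Hence $\varphi$ is equivalent over the theory to no quantifier-free formula in the free variable $y$, and therefore $\langle\mathbb{N};<,\mathbf{0}\rangle$ does not admit quantifier elimination.

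The argument has no serious obstacle beyond careful bookkeeping; the one point deserving attention is that adjoining the constant $\mathbf{0}$ genuinely enlarges the stock of atomic formulas (now $y\!<\!\mathbf{0}$, $\mathbf{0}\!<\!y$ and $y\!=\!\mathbf{0}$ appear), so one must verify that these new atoms really do reduce as claimed, which is exactly where the minimality of $\mathbf{0}$ in $\mathbb{N}$ is used. It is also worth recording that the constant $\mathbf{0}$ does help — the previously fatal formula $\exists x(x\!<\!y)$ becomes the quantifier-free $\mathbf{0}\!<\!y$ — but not enough: more strongly, the formulas $\exists x_1\cdots\exists x_k(\mathbf{0}\!<\!x_1\!<\!\cdots\!<\!x_k\!<\!y)$ for $k=1,2,3,\dots$ define the pairwise distinct subsets $\{k\!+\!1,k\!+\!2,\dots\}$ of $\mathbb{N}$ with the single free variable $y$, so no finite supply of quantifier-free formulas — let alone the four available ones — can keep up with them.
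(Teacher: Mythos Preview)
Your argument is correct, but the witness you use differs from the paper's. The paper chooses the two-variable formula $\exists x\,(y<x<z)$ and lists all atomic formulas in the free variables $y,z$ over $\{<,\mathbf{0}\}$ (namely $y=\mathbf{0}$, $z=\mathbf{0}$, $\mathbf{0}<y$, $\mathbf{0}<z$, $y=z$, $z<y$, $y<z$, together with the trivially $\top$ or $\bot$ ones), asserting that no Boolean combination of these matches the given formula. You instead specialise one of the bound variables to $\mathbf{0}$ and work with the one-variable formula $\exists x\,(\mathbf{0}<x<y)$; this lets you collapse the entire stock of quantifier-free formulas in $y$ to a four-element Boolean algebra $\{\bot,\top,\,y=\mathbf{0},\,y\neq\mathbf{0}\}$, after which the comparison with the definable set $\{2,3,4,\dots\}$ is immediate. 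Your route is tidier in that the final ``no Boolean combination works'' step is fully verified rather than left to the reader; the paper's choice of $\exists x\,(y<x<z)$ has the narrative advantage of being exactly the formula that motivates adjoining the successor $\mathfrak{s}$ in the next step, since $\exists x\,(y<x<z)\leftrightarrow\mathfrak{s}(y)<z$.
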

\begin{proof}\rm
It suffices to show that the formula $\exists x (y<x<z)$ is not equivalent with any quantifier-free formula. All the atomic formulas with the free variables $y$ and $z$ are $y=0$, $z=0$, $0<y$, $0<z$, $y=y(\equiv\top)$, $z=z(\equiv\top)$, $y<y(\equiv\bot)$, $z<z(\equiv\bot)$, $y=z$, $z=y$, $z<y$ and $y<z$. None of the propositional compositions of these formulas can be equivalent with  the formula $\exists x (y<x<z)$.
\qed\end{proof}

\begin{remark}\rm
If we add the successor operation $\mathfrak{s}$ to the language $\{<\}$ we will have:
$$\exists x (y<x<z)\iff\mathfrak{s}(y)<z,$$
and now we show that the quantifier elimination is still not possible in the language $\{<,\mathfrak{s}\}$ [Proposition~\ref{hs2}, below].
\qecon\end{remark}

\begin{pro}\rm\label{hs2}
The theory of the structure $\langle\mathbb{N};<,\mathfrak{s}\rangle$ does not admit quantifier elimination.
\end{pro}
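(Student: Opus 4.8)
The plan is to follow the template used in the preceding propositions of this subsection: exhibit a single formula with one free variable whose truth in $\langle\mathbb{N};<,\mathfrak{s}\rangle$ genuinely depends on that variable, and then show that no quantifier-free formula of $\{<,\mathfrak{s}\}$ in one free variable can have that property. The natural witness is $\varphi(y)\;:\;\exists x\,(\mathfrak{s}(x)=y)$, which in $\langle\mathbb{N};<,\mathfrak{s}\rangle$ holds precisely for $y\neq 0$, since $0$ is the only element with no predecessor. This is exactly the feature that separates $\mathbb{N}$ from $\mathbb{Z}$ in this language: in $\langle\mathbb{Z};<,\mathfrak{s}\rangle$ the successor is surjective (axiom $\texttt{O}_8$), whereas in $\langle\mathbb{N};<,\mathfrak{s}\rangle$ it is not, and $0$ is not denoted by any term.

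The steps, in order, would be as follows. First, observe that the only terms of $\{<,\mathfrak{s}\}$ with a single free variable $y$ are $\mathfrak{s}^n(y)$ for $n\in\mathbb{N}$ (there are no constant symbols), so, by Remark~\ref{hn}, every atomic formula with free variable $y$ has the shape $\mathfrak{s}^n(y)=\mathfrak{s}^m(y)$ or $\mathfrak{s}^n(y)<\mathfrak{s}^m(y)$. Second, evaluate such atoms in the structure: since $\mathfrak{s}^n(y)$ is interpreted as $y+n$, we get $\mathfrak{s}^n(y)=\mathfrak{s}^m(y)\equiv\top$ if $n=m$ and $\equiv\bot$ otherwise, and $\mathfrak{s}^n(y)<\mathfrak{s}^m(y)\equiv\top$ if $n<m$ and $\equiv\bot$ otherwise; in either case the truth value is independent of $y$. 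Third, conclude that any quantifier-free $\psi(y)$, being a propositional combination of such atoms, is equivalent in $\langle\mathbb{N};<,\mathfrak{s}\rangle$ to $\top$ or to $\bot$, hence defines either $\mathbb{N}$ or $\emptyset$. Fourth, note that $\varphi(y)$ defines $\mathbb{N}\setminus\{0\}$, which is neither empty nor all of $\mathbb{N}$; therefore no quantifier-free $\psi$ is equivalent to $\varphi$, and the theory of $\langle\mathbb{N};<,\mathfrak{s}\rangle$ does not admit quantifier elimination.

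There is essentially no hard step here; the only point requiring care is the enumeration of atomic formulas once $\mathfrak{s}$ — and hence the terms $\mathfrak{s}^n(y)$ — is present, which is what distinguishes this argument from the one for $\langle\mathbb{N};<\rangle$, where no such terms exist. One should also resist deducing the statement from the failure of quantifier elimination for $\langle\mathbb{N};<\rangle$ or $\langle\mathbb{N};<,\mathbf 0\rangle$: enriching the language can both create and destroy quantifier elimination (indeed $\langle\mathbb{Z};<,\mathfrak{s}\rangle$ admits it while $\langle\mathbb{Z};<\rangle$ does not, by Theorem~\ref{thm-oz}), so the argument must be carried out directly in $\{<,\mathfrak{s}\}$. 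Finally, I note that $\exists x\,(x<y)$ would serve as a witness equally well, being equivalent in $\mathbb{N}$ to the same condition $y\neq 0$.
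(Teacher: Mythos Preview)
Your proposal is correct and follows essentially the same approach as the paper: the same witness formula $\exists x(\mathfrak{s}(x)=y)$, the same classification of atomic formulas in one free variable as $\mathfrak{s}^n(y)=\mathfrak{s}^m(y)$ or $\mathfrak{s}^n(y)<\mathfrak{s}^m(y)$, and the same observation that these are all equivalent to $\top$ or $\bot$ independently of $y$. Your write-up is more detailed (and the side remarks about $\langle\mathbb{Z};<,\mathfrak{s}\rangle$ and the alternative witness $\exists x(x<y)$ are apt), but the argument is the paper's.
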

\begin{proof}\rm
We show that the formula $\exists x (\mathfrak{s}(x)=y)$ is not equivalent with any quantifier-free formula. All the atomic formulas with the free variable $y$ are either of the form $\mathfrak{s}^n(y)<\mathfrak{s}^m(y)$ or $\mathfrak{s}^n(y)=\mathfrak{s}^m(y)$ which do not depend on $y$ and are equivalent to either $\top$ or $\bot$. So, the formula $\exists x (\mathfrak{s}(x)=y)$ \textup{(}which is equivalent with $\bot$ for $y=0$ and with $\top$ otherwise\textup{)} is not equivalent with any quantifier-free $\{<,\mathfrak{s}\}$-formula.
\qed\end{proof}

\noindent
In the following we will show the quantifier elimination of the theory of the structure $\langle\mathbb{N};<,\mathfrak{s},{\bf 0}\rangle$. This theorem has been proved in \cite[Theorem~32A]{enderton}. \bigskip
\begin{theorem}\rm\label{thm-on}
The following axioms completely axiomatize the order theory of the ordered natural numbers:
\begin{equation*}
\begin{tabular}{l}
($\texttt{O}_1$) \; $\forall x,y(x<y\rightarrow y\not<x)$ \\
($\texttt{O}_2$) \; $\forall x,y,z (x<y<z\rightarrow x<z)$ \\
($\texttt{O}_3$) \; $\forall x,y (x<y \vee x=y \vee y<x)$ \\
($\texttt{O}_7$) \; $\forall x,y (x\!<\!y \;\leftrightarrow\; \mathfrak{s}(x)\!<\!y \vee \mathfrak{s}(x)\!=\!y)$\\
($\texttt{O}_8^\circ$) \; $\forall x\exists y (x\neq {\bf 0} \rightarrow \mathfrak{s}(y)=x)$ \\
($\texttt{O}_9$) \; $\forall x (x\not<{\bf 0})$
\end{tabular}
\end{equation*}
and, moreover, the structure $\langle\mathbb{N};<,\mathfrak{s},{\bf 0}\rangle$ admits quantifier elimination, and so its theory is decidable.
\end{theorem}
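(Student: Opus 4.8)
The plan is to run the quantifier-elimination argument of Theorem~\ref{thm-oz} for $\langle\mathbb{Z};<,\mathfrak{s}\rangle$, adjusted to the presence of a least element $\mathbf{0}$. First I would record normal forms: every $\{<,\mathfrak{s},\mathbf{0}\}$-term is $\mathfrak{s}^n(v)$ for a variable $v$ or a numeral $\mathfrak{s}^n(\mathbf{0})$, so by Remark~\ref{hn} every atomic formula is $\mathfrak{s}^n(u)<\mathfrak{s}^m(w)$ or $\mathfrak{s}^n(u)=\mathfrak{s}^m(w)$ with $u,w$ variables or $\mathbf{0}$, and negated atoms may be dropped. By Lemma~\ref{mainlem} it then suffices to eliminate $\exists x$ from a conjunction $\bigwedge_i\alpha_i$ of such atoms. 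Atoms in which $x$ occurs on both sides reduce to $\top$ or $\bot$ exactly as in Theorem~\ref{thm-oz}, using that $\texttt{O}_7$ together with $\texttt{O}_1$--$\texttt{O}_3$ proves $x<\mathfrak{s}(x)$, the injectivity of $\mathfrak{s}$, and $[u<v]\leftrightarrow[\mathfrak{s}(u)<\mathfrak{s}(v)]$, $[u=v]\leftrightarrow[\mathfrak{s}(u)=\mathfrak{s}(v)]$. The surviving atoms have the form $t<\mathfrak{s}^{a}(x)$, $\mathfrak{s}^{a}(x)<t$, or $\mathfrak{s}^{a}(x)=t$ for $x$-free terms $t$.

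Next, using the two $\texttt{O}_7$-equivalences above I would raise every atom so that $x$ occurs precisely as $\mathfrak{s}^{\alpha}(x)$, where $\alpha$ is the maximal exponent present. Here is the one genuinely new point over the integer case: in $\mathbb{Z}$ the map $x\mapsto\mathfrak{s}^{\alpha}(x)$ is a bijection, whereas in $\mathbb{N}$ its image is $\{y:y\geq\alpha\}$. Applying $\texttt{O}_8^\circ$ $\alpha$ times (legitimately, since each intermediate value is still nonzero), together with injectivity of $\mathfrak{s}$, one gets that $\exists x\,\Phi(\mathfrak{s}^{\alpha}(x))$ is equivalent to $\exists y\,\Phi(y)$ when $\alpha=0$ and to $\exists y\,(\mathfrak{s}^{\alpha-1}(\mathbf{0})<y\wedge\Phi(y))$ when $\alpha\geq 1$; crucially ``$y\geq\alpha$'' is literally the single lower-bound atom $\mathfrak{s}^{\alpha-1}(\mathbf{0})<y$, so no case splitting is introduced. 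Thus it remains to eliminate $\exists y$ from $\bigwedge_{i<\ell}t_i<y\wedge\bigwedge_{j<m}y<s_j\wedge\bigwedge_{k<n}y=u_k$ with $t_i,s_j,u_k$ all $y$-free.

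Finally I would finish by the case analysis, as for $\langle\mathbb{Z};<,\mathfrak{s}\rangle$ but watching the bottom. If $n\neq0$, substitute $u_0$ for $y$. If $n=0$ and $m=0$, the formula is $\top$: a successor of the largest $t_i$ (picked via $\texttt{O}_3$), or $\mathbf{0}$ if $\ell=0$, witnesses it. If $n=0$, $m\geq1$, $\ell\geq1$, it is equivalent to $\bigwedge_{i,j}\mathfrak{s}(t_i)<s_j$ by $\texttt{O}_7$, exactly as in formula~\eqref{f-4}. The one truly new subcase is $n=0$, $m\geq1$, $\ell=0$, i.e.\ $\exists y\,\bigwedge_{j}y<s_j$: since by $\texttt{O}_9$ the element $\mathbf{0}$ is the minimum, this is equivalent to the quantifier-free $\bigwedge_{j}\mathbf{0}<s_j$ (witness $y=\mathbf{0}$ if it holds, nothing below $\mathbf{0}$ if not). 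This completes the elimination; as every step is justified from the six axioms and each variable-free atom $\mathfrak{s}^n(\mathbf{0})<\mathfrak{s}^m(\mathbf{0})$, $\mathfrak{s}^n(\mathbf{0})=\mathfrak{s}^m(\mathbf{0})$ is decided by them, the axiom set is complete, hence equals $\mathrm{Th}\langle\mathbb{N};<,\mathfrak{s},\mathbf{0}\rangle$, and the theory is decidable — directly, because the elimination is an algorithm, or by Proposition~\ref{ctd} since the axiomatization is finite. The only obstacle worth flagging is the inversion of $\mathfrak{s}^{\alpha}$ over $\mathbb{N}$ and the resulting new base subcase; both are absorbed cleanly by $\texttt{O}_8^\circ$ and $\texttt{O}_9$, and the rest is the integer argument verbatim.
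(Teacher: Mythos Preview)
Your proposal is correct and follows essentially the same route as the paper's own proof: normalize atoms, raise all occurrences of $x$ to the maximal successor exponent, trade $\exists x\,\Phi(\mathfrak{s}^{\alpha}(x))$ for $\exists y$ with the extra lower-bound constraint coming from the range of $\mathfrak{s}^{\alpha}$, and finish with the same four-way case analysis. The only cosmetic difference is that the paper records the range constraint as $\mathfrak{s}^{N}(\mathbf{0})\leqslant y$ while you write the equivalent strict atom $\mathfrak{s}^{\alpha-1}(\mathbf{0})<y$; otherwise the arguments coincide.
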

\begin{proof}\rm
All the atomic formulas of the free variable $u$ in the language $\{<,\mathfrak{s},\bf 0\}$ are   of the form    $\mathfrak{s}^n(u)=\mathfrak{s}^m(u)$ or $\mathfrak{s}^n(u)<\mathfrak{s}^m(u)$ or
$\mathfrak{s}^n(\texttt 0)=\mathfrak{s}^m(u)$ or $\mathfrak{s}^n(\texttt 0)<\mathfrak{s}^m(u)$ or
$\mathfrak{s}^n(u)<\mathfrak{s}^m(\texttt 0)$. The formula $\mathfrak{s}^n(u)=\mathfrak{s}^m(u)$ is equivalent with $\top$ when $n=m$ and with $\bot$ otherwise; also $\mathfrak{s}^n(u)<\mathfrak{s}^m(u)$ is equivalent with $\top$ when $n<m$ and with $\bot$ otherwise. So, it suffices to consider the atomic formulas of the form  $t<\mathfrak{s}^n(x)$ or $\mathfrak{s}^n(x)<t$ or $\mathfrak{s}^n(x)=t$ for some $x$-free term $t$ and $n\in\mathbb{N}^+$. Now, by Lemma~\ref{mainlem} and  the presence of $<$, which eliminates the negation already, we eliminate the quantifier of the following formulas
\begin{equation}\label{ff-2}
\exists x (\bigwedge\hspace{-2.15ex}\bigwedge_{i<\ell} t_i<\mathfrak{s}^{p_i}(x) \wedge \bigwedge\hspace{-2.55ex}\bigwedge_{j<m} \mathfrak{s}^{q_j}(x) <s_j \wedge \bigwedge\hspace{-2.35ex}\bigwedge_{k<n} \mathfrak{s}^{r_k}(x)=u_k).
\end{equation}
By the provable formulas
$$\mathfrak{s}(x)<\mathfrak{s}(y)\Leftrightarrow x<y\hspace{0.5cm}\text{and}\hspace{0.5cm}\mathfrak{s}(x)=\mathfrak{s}(y)\Leftrightarrow x=y,$$
the formula~\eqref{ff-2}, for $N=\max\{p_i, q_j, r_k\}$, is equivalent with
\begin{equation}\label{ff-3}
\exists x \Big(\bigwedge\hspace{-2.15ex}\bigwedge_{i<\ell}\mathfrak{s}^{N-p_i}(t_i)<\mathfrak{s}^{N}(x) \wedge \bigwedge\hspace{-2.55ex}\bigwedge_{j<m} \mathfrak{s}^{N}(x) <\mathfrak{s}^{N-q_j}(s_j) \wedge \bigwedge\hspace{-2.35ex}\bigwedge_{k<n} \mathfrak{s}^{N}(x)=\mathfrak{s}^{N-r_k}(u_k)\Big).
\end{equation}
Now for $y=\mathfrak{s}^{N}(x)$, $t_i'=\mathfrak{s}^{N-p_i}(t_i)$, $s_j'=\mathfrak{s}^{N-q_j}(s_j)$ and $u_k'=\mathfrak{s}^{N-r_k}(u_k)$ the formula~\eqref{ff-3} is equivalent with
\begin{equation*}\label{ff-4}
\exists y (\bigwedge\hspace{-2.15ex}\bigwedge_{i<\ell} t_i'<y \wedge \bigwedge\hspace{-2.55ex}\bigwedge_{j<m} y <s_j' \wedge \bigwedge\hspace{-2.35ex}\bigwedge_{k<n} y=u_k'\ \ \wedge\ \ \mathfrak{s}^{N}(\texttt 0)\leqslant y).
\end{equation*}
So, it suffices to eliminate the quantifiers of the following formulas:
\begin{equation}\label{ff-4}
\exists y (\bigwedge\hspace{-2.15ex}\bigwedge_{i<\ell} t_i<y \wedge \bigwedge\hspace{-2.55ex}\bigwedge_{j<m} y <s_j \wedge \bigwedge\hspace{-2.35ex}\bigwedge_{k<n} y=u_k).
\end{equation}
If $n\neq 0$, then the formula~\eqref{ff-4} is equivalent with the following quantifier-free formula:
\begin{equation*}
\bigwedge\hspace{-2.15ex}\bigwedge_{i<\ell} t_i<u_0 \wedge \bigwedge\hspace{-2.55ex}\bigwedge_{j<m} u_0<s_j \wedge \bigwedge\hspace{-2.35ex}\bigwedge_{k<n} u_0=u_k.
\end{equation*}
And, if $n=0$, then we eliminate the quantifier of:
\begin{equation}\label{ff-5}
\exists y (\bigwedge\hspace{-2.15ex}\bigwedge_{i<\ell} t_i<y \wedge \bigwedge\hspace{-2.55ex}\bigwedge_{j<m} y<s_j).
\end{equation}
Now, If $\ell=0$, then the formula~\eqref{ff-5} is equivalent with the following quantifier-free formula:
\begin{equation*}
\bigwedge\hspace{-2.55ex}\bigwedge_{j<m} {\bf 0}<s_j.
\end{equation*}
If $m=0$, then the formula~\eqref{ff-5} is equivalent with $\top$.\\
Finally, if $\ell\neq{\bf 0}\neq m$, then the formula~\eqref{ff-5} is equivalent with the following quantifier-free formula:
\begin{equation*}
\bigwedge\hspace{-2.1ex}\bigwedge_{i,j} \mathfrak{s}(t_i)<s_j.
\end{equation*}
\qed\end{proof} 

\chapter{Additive Ordered Structures} 
\label{Chapter3} 
In this chapter, we study the structures of the sets $\mathbb{N},\mathbb{Z},\mathbb{Q},\mathbb{R}$ over the language \mbox{$\{+,<\}$.}

\section{Some Group Theory}

\begin{definition}[Group\label{g}]\rm
A {\em group} is a structure  $\langle G;\ast,{\sf e},\iota\rangle$, where $\ast$ is a binary operation on $G$, ${\sf e}$ is a constant (a special element of $G$) and $\iota$ is a unary operation on $G$, which satisfy the following axioms:
\begin{equation*}
\begin{tabular}{l}
 $\forall x,y,z\,[x\ast (y\ast z)=(x\ast y)\ast z]$; \\
 $\forall x (x\ast {\sf e}=x)$; \\
 $\forall x (x\ast  \iota(x)={\sf e})$.
\end{tabular}
\end{equation*}
\qedef\end{definition}

\noindent
$\bullet$
A group is called {\em non-trivial\label{ntg}} when
\begin{equation*}
\begin{tabular}{l}
 $\exists x (x\neq {\sf e})$.
\end{tabular}
\end{equation*}

\begin{definition}[Abelian group\label{ag}]\rm
A group is called {\em abelian} when it satisfies the commutativity axiom:\\[-2.6em]
$$\forall x,y (x\ast y=y\ast x).$$
\qedef\end{definition}

\begin{definition}[Divisible group\label{dvg}]\rm
A group is called {\em divisible} when for any $n\in\mathbb{N^{+}}$ we have
\begin{equation*}
\begin{tabular}{l}
 $\forall x\exists y[x=\underbrace{y\ast\cdots\ast y}_{\text{n-times}}]$.
\end{tabular}
\end{equation*}
\qedef\end{definition}

\begin{definition}[Ordered group\label{og}]\rm
An {\em ordered group} is a group equipped with an order relation $<$ (which satisfies $\texttt{O}_1,\texttt{O}_2,\texttt{O}_3$) such that also the axiom

\begin{equation*}
\begin{tabular}{l}
$\forall x,y,z(x\!<\!y \;\rightarrow\; x\ast z\!<\!y\ast z \;\wedge\; z\ast x\!<\!z\ast y)$
\end{tabular}
\end{equation*}
is satisfied in it.
\qedef\end{definition}

\begin{remark}\rm
The axioms of The  Theory of Non-trivial Ordered Divisible Abelian Groups in the language $\mathcal{L}=\{<,+,-,0\}$ are as follows:
\begin{equation*}
\begin{tabular}{l}
($\texttt{O}_1$) \; $\forall x,y(x<y\rightarrow y\not<x)$  \\ ($\texttt{O}_2$) \; $\forall x,y,z (x<y<z\rightarrow x<z)$ \\
($\texttt{O}_3$) \; $\forall x,y (x<y \vee x=y \vee y<x)$ \\
($\texttt{A}_1$) \; $\forall x,y,z\,(x+(y+z)=(x+y)+z)$ \\
($\texttt{A}_2$) \; $\forall x (x+\mathbf{0}=x)$ \\
($\texttt{A}_3$) \; $\forall x (x+ (-x)=\mathbf{0})$ \\
($\texttt{A}_4$) \; $\forall x,y (x+y=y+x)$ \\
($\texttt{A}_5$) \; $\forall x,y,z(x<y\rightarrow x+z<y+z)$ \\
($\texttt{A}_6$) \; $\exists y (y\neq {\bf 0})$ \\
($\texttt{A}_7$) \; $\forall x\exists y(x=n\centerdot y)$  \qquad \qquad \qquad  $n\in\mathbb{N}^+$  \\
\end{tabular}
\end{equation*}
\qecon\end{remark}

\section{The Rational and Real Numbers with Order and Addition}

\subsection{Quantifier Elimination of $\langle\mathbb{R};<,+\rangle$ and $\langle\mathbb{Q};<,+\rangle$}

\begin{theorem}\rm\label{thm-oa}
The infinite theory of non-trivial ordered divisible abelian groups completely axiomatizes the order and additive theory of the   real and rational  numbers and, moreover, the structures $\langle\mathbb{R};<,+,-,{\bf 0}\rangle$ and  $\langle\mathbb{Q};<,+,-,{\bf 0}\rangle$ admit quantifier elimination, and so their theories are decidable~\cite[Corollary 3.1.17]{marker}.
\end{theorem}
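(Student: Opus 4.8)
The strategy is to verify quantifier elimination for the theory $T$ of non-trivial ordered divisible abelian groups via the Main Lemma (Lemma~\ref{mainlem}), and then to derive completeness and decidability from this, exactly as in the pattern already established for Theorems~\ref{thm-o} and~\ref{thm-oz}. First I would observe that in the language $\{<,+,-,{\bf 0}\}$ every term in the free variable $x$ can, using associativity, commutativity, and the group axioms $\texttt{A}_1$--$\texttt{A}_4$, be brought to the normal form $k\centerdot x + t$ where $k\in\mathbb{Z}$ and $t$ is an $x$-free term (here $k\centerdot x$ abbreviates an iterated sum, with negative $k$ using $-$). Hence every atomic formula involving $x$ is equivalent to one of $k\centerdot x + t \mathrel{R} 0$ with $R\in\{<,=\}$ and $k\neq 0$; by Remark~\ref{hn} negated atomics need not be considered. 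After moving things across and using divisibility $\texttt{A}_7$ to clear the coefficient $k$ (if $k\centerdot x + t = 0$, choose via $\texttt{A}_7$ a witness, etc.), one reduces every atomic formula to the form $x \mathrel{R} t$, i.e. $x < t$, $t < x$, or $x = t$ for $x$-free terms $t$.

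By Lemma~\ref{mainlem} it then suffices to eliminate the quantifier from
\begin{equation*}
\exists x\Big(\bigwedge\hspace{-2.15ex}\bigwedge_{i<\ell} t_i < x \ \wedge\ \bigwedge\hspace{-2.55ex}\bigwedge_{j<m} x < s_j \ \wedge\ \bigwedge\hspace{-2.35ex}\bigwedge_{k<n} x = u_k\Big),
\end{equation*}
and here the argument is essentially the dense-linear-order argument of Theorem~\ref{thm-o}, the point being that the ordered group is densely ordered without endpoints: non-triviality plus $\texttt{A}_5$ gives an element $>{\bf 0}$ hence, by translation, no endpoints, and divisibility (division by $2$) together with $\texttt{A}_5$ supplies a midpoint between any two elements, hence density. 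So if $n\neq 0$, substitute $u_0$ for $x$ and the formula becomes the quantifier-free $\bigwedge_{i}t_i<u_0 \wedge \bigwedge_{j}u_0<s_j \wedge \bigwedge_{k}u_0=u_k$; if $n=0$ and $\ell=0$ or $m=0$, it is $\top$ (no endpoints); and if $n=0$ and $\ell,m\neq 0$, it is equivalent, by density and transitivity, to $\bigwedge_{i<\ell,\,j<m} t_i < s_j$. This completes the quantifier elimination.

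Finally, to conclude that $T$ \emph{completely} axiomatizes the theories of $\langle\mathbb{R};<,+,-,{\bf 0}\rangle$ and $\langle\mathbb{Q};<,+,-,{\bf 0}\rangle$: both structures are models of $T$, and a theory that admits quantifier elimination and proves or refutes every variable-free atomic sentence (here, all closed atomics are built from ${\bf 0}$ and are decided by the group axioms) is complete; hence $T$ equals the common theory of all its models, in particular the theory of each of $\mathbb{R}$ and $\mathbb{Q}$ in this language. Decidability then follows from Proposition~\ref{ctd}(2), since $T$ is given by an explicit (computably enumerable, indeed decidable) axiom set and is complete; alternatively it follows directly from the quantifier-elimination algorithm being a decision procedure, as noted after Definition~\ref{da}. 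The main obstacle is the bookkeeping in the first paragraph: carefully justifying the term normal form and the clearing of integer coefficients using the schema $\texttt{A}_7$ uniformly in $n$, so that every atomic formula in $x$ really does reduce to the simple shape $x\mathrel{R}t$; once that is in place, the quantifier-step is a routine replay of the dense-linear-order case.
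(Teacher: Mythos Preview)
Your overall strategy matches the paper's: normalize terms, reduce to a single existential over a conjunction of atomics via Lemma~\ref{mainlem}, derive density and the absence of endpoints from divisibility and non-triviality, and then replay the dense-linear-order argument of Theorem~\ref{thm-o}. The completeness and decidability conclusions are drawn in the same way.

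There is, however, one genuine inaccuracy in your bookkeeping step. You assert that each atomic $k\centerdot x + t \mathrel{R} 0$ can be reduced, using $\texttt{A}_7$, to the shape $x\mathrel{R} t'$ with $t'$ an $x$-free \emph{term}. This is not possible in the language $\{<,+,-,{\bf 0}\}$: from $2\centerdot x < s$ you would need $t'=s/2$, and division by an integer is not a term-forming operation here. The axiom $\texttt{A}_7$ only gives you an \emph{element} with $2\centerdot z=s$, not a term denoting it. The paper's fix is exactly the one you should use: rather than dividing each atomic separately, multiply each atomic by a suitable integer so that all the $x$-coefficients become a common value $\alpha$ (using the provable equivalences $a<b\leftrightarrow n\centerdot a<n\centerdot b$ and $a=b\leftrightarrow n\centerdot a=n\centerdot b$), and then substitute $y=\alpha\centerdot x$; the quantifier $\exists x$ becomes $\exists y$ precisely because $\texttt{A}_7$ makes the map $x\mapsto \alpha\centerdot x$ surjective. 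After that substitution the formula has the shape
\[
\exists y\Big(\bigwedge_i t_i'<y \ \wedge\ \bigwedge_j y<s_j' \ \wedge\ \bigwedge_k y=u_k'\Big),
\]
and your dense-linear-order endgame goes through verbatim. So your plan is correct modulo replacing ``clear each coefficient'' by ``equalize all coefficients and substitute''; this is exactly the step the paper carries out.
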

\begin{proof}
Firstly, let us note that  $\texttt{O}_4$, $\texttt{O}_5$ and $\texttt{O}_6$ can be proved from the presented axioms: if $a<b$ then by $\texttt{A}_7$ there exists some $c$ such that $c+c=a+b$;  one can easily show that $a<c<b$ holds.  Thus $\texttt{O}_4$ is proved; for  $\texttt{O}_5$ note that for any ${\bf 0}<a$ we have $a<a+a$ by $\texttt{A}_5$. A dual argument can prove the axiom $\texttt{O}_6$. Also,  the     equivalences
\begin{itemize}\itemindent=5ex
\item[(i)] $[a<b] \leftrightarrow [n\centerdot a< n\centerdot b]$ and
\item[(ii)] $[a=b] \leftrightarrow [n\centerdot a=n\centerdot b]$
\end{itemize}
can be proved from the axioms: (i) follows from $\texttt{A}_5$ (with $\texttt{O}_1,\texttt{O}_2,\texttt{O}_3$) and (ii) follows from $\forall x (n\centerdot x={\bf 0}\rightarrow x={\bf 0})$ which is derived from $\texttt{A}_5$ (with $\texttt{O}_1,\texttt{O}_2,\texttt{O}_3$).

Secondly, every term containing $x$ is equal to $n\centerdot x + t$ for some $x$-free term $t$ and $n\!\in\!\mathbb{Z}\!-\!\{0\}$. So, every atomic formula containing $x$ is equivalent with $n\centerdot x \Box t$ where $\Box\!\in\!\{=,<,>\}$. Whence, by Remark~\ref{mainlem}, it suffices to prove the equivalence of the formula
\begin{equation}\label{f-5}
\exists x (\bigwedge\hspace{-2.15ex}\bigwedge_{i<\ell} t_i<p_i\centerdot x \wedge \bigwedge\hspace{-2.55ex}\bigwedge_{j<m} q_j\centerdot x <s_j \wedge \bigwedge\hspace{-2.35ex}\bigwedge_{k<n} r_k\centerdot x=u_k)
\end{equation}
with a quantifier-free formula.
  By the equivalences (i) and (ii) above,
we can assume that $p_i$'s and $q_j$'s and $r_k$'s in
the formula~\eqref{f-5} are equal to each other, say to $\alpha$. Then by $\texttt{A}_7$, the formula~\eqref{f-5} is equivalent with
\begin{equation}\label{f-6}
\exists y (\bigwedge\hspace{-2.15ex}\bigwedge_{i<\ell} t_i'<y \wedge \bigwedge\hspace{-2.55ex}\bigwedge_{j<m} y<s_j' \wedge \bigwedge\hspace{-2.35ex}\bigwedge_{k<n} y=u_k')
\end{equation}
for some (possibly new) terms $t_i',s_j',u_k'$ (and   $y=\alpha\centerdot x$).\\
Now, if $n\neq 0$ then the formula~\eqref{f-6} is equivalent with the quantifier-free formula
\begin{equation*}
\bigwedge\hspace{-2.15ex}\bigwedge_{i<\ell} t_i'<u_0 \wedge \bigwedge\hspace{-2.55ex}\bigwedge_{j<m} u_0<s_j' \wedge \bigwedge\hspace{-2.35ex}\bigwedge_{k<n} u_0=u_k'.
\end{equation*}
So, let us suppose that $n=0$. Then if $\ell=0$ or $m=0$, the formula~\eqref{f-6} is equivalent with the quantifier-free formula $\top$, by the axioms   $\texttt{O}_5$ and $\texttt{O}_6$  (with $\texttt{O}_2$ and $\texttt{O}_3$) respectively,  and if $\ell,m\neq 0$, it is equivalent with the quantifier-free formula $\bigwedge\hspace{-1.55ex}\bigwedge_{i<\ell,j<m} t_i'<s_j'$ by the axiom  $\texttt{O}_4$ (with $\texttt{O}_2$ and $\texttt{O}_3$). \textup{(}Compare with the proof of Theorem~\ref{thm-o}\textup{)}
\qed\end{proof}

\subsection{Non-finite Axiomatizability of $\langle\mathbb{R};<,+\rangle$ and $\langle\mathbb{Q};<,+\rangle$}

\begin{pro}\label{rem-infq}\rm
The structures  $\langle\mathbb{R};<,+\rangle$ and $\langle\mathbb{Q};<,+\rangle$ are not finitely axiomatizable.
\end{pro}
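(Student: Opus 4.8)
**Proof proposal for Proposition 3.2.5 (non-finite axiomatizability of $\langle\mathbb{R};<,+\rangle$ and $\langle\mathbb{Q};<,+\rangle$).**

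The plan is to use the standard compactness argument: a theory $T$ is finitely axiomatizable precisely when, among all models of $T$, being a model is equivalent to satisfying a single sentence, and this fails if one can exhibit, for arbitrarily large finite fragments of the axiom list, a structure satisfying that fragment but \emph{not} lying in the class axiomatized by $T$. Concretely, I would show that the class of non-trivial ordered divisible abelian groups is not closed under ultraproducts-from-the-complement — equivalently, I would build models $\mathcal{M}_n$ satisfying $\texttt{O}_1$--$\texttt{O}_3,\texttt{A}_1$--$\texttt{A}_6$ together with the divisibility axiom $\texttt{A}_7$ for all primes $p\le n$, but failing $\texttt{A}_7$ for some prime $p>n$. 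A suitable witness is the localization $\mathbb{Z}_{(P)}$ (or the group $\mathbb{Z}[1/p_1,\dots,1/p_k]$), the subgroup of $\mathbb{Q}$ consisting of fractions whose denominators involve only the first $n$ primes: this is a non-trivial ordered abelian group (as an ordered subgroup of $\mathbb{Q}$), it is $p$-divisible for each of the first $n$ primes, but it is not $q$-divisible for any later prime $q$, so it fails the divisibility axiom for $q$. An ultraproduct of the $\mathcal{M}_n$ over a non-principal ultrafilter then satisfies every axiom $\texttt{O}_1$--$\texttt{O}_3,\texttt{A}_1$--$\texttt{A}_6$ and \emph{every} instance of $\texttt{A}_7$ (each instance is eventually true along the sequence), hence is a non-trivial ordered divisible abelian group; meanwhile, if the theory were finitely axiomatizable, finitely many axioms would suffice, and all $\mathcal{M}_n$ for $n$ large enough would already be models — contradiction, since no $\mathcal{M}_n$ is divisible.

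The cleanest way to package this without invoking ultraproducts explicitly is via compactness applied to the hypothetical single axiom $\theta$: if $\theta$ axiomatizes the theory, then $\{\theta\}$ proves each instance $\texttt{A}_7^{(p)}$ of divisibility, so by compactness finitely many of the $\texttt{A}_7^{(p)}$, say for primes in a finite set $P$, together with $\texttt{O}_1$--$\texttt{O}_3,\texttt{A}_1$--$\texttt{A}_6$ already prove $\texttt{A}_7^{(q)}$ for a prime $q\notin P$. But $\mathbb{Z}$ localized at $P$ is a model of the former and not of the latter, a contradiction. I would spell this out for, say, $\langle\mathbb{Q};<,+\rangle$, and note that by Theorem~\ref{thm-oa} the theories of $\langle\mathbb{Q};<,+,-,{\bf 0}\rangle$ and $\langle\mathbb{R};<,+,-,{\bf 0}\rangle$ coincide (both equal the theory of non-trivial ordered divisible abelian groups), so the same single $\theta$ would work for both; and adding or removing the definable-or-harmless symbols $-,{\bf 0}$ does not affect finite axiomatizability, since ${\bf 0}$ and $-$ are term-definable over $\{<,+\}$ in these groups (${\bf 0}$ is the unique $x$ with $x+x=x$, and $-x$ the unique $y$ with $x+y={\bf 0}$), so a finite axiomatization in one language transfers to the other.

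The main obstacle is simply the bookkeeping of verifying that the localized group $\mathbb{Z}_{(P)}$ genuinely fails $q$-divisibility while satisfying all the other axioms: one must check it is non-trivial, linearly ordered by the induced order from $\mathbb{Q}$, compatible with addition, $p$-divisible for $p\in P$, and that $1$ (say) has no $q$-th root in it for $q\notin P$ — the last point is the crux and follows because an element $a/b\in\mathbb{Z}_{(P)}$ with $q\cdot(a/b)=1$ would force $q\mid b$, impossible. Everything else is routine, and the logical skeleton (compactness turning "proves infinitely many instances" into "a finite subset already proves a later instance, contradicting a witness model") is exactly the template already used implicitly for $\langle\mathbb{R};<,+\rangle$ in the literature.
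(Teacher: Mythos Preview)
Your proposal is correct and takes essentially the same approach as the paper: both exhibit, for each finite fragment of the axiomatization, an ordered subgroup of $\mathbb{Q}$ that is $n$-divisible for all $n$ involved in that fragment but fails $q$-divisibility for a larger prime $q$---your $\mathbb{Z}[1/p_1,\dots,1/p_k]$ is exactly the paper's $\{m/(N!)^k : m\in\mathbb{Z},\,k\in\mathbb{N}\}$ once one notes the primes dividing $N!$ are precisely those $\le N$. The paper simply presents the witness and leaves the compactness step implicit, whereas you spell out the logical skeleton (and the language-change remark about $-,{\bf 0}$) explicitly; neither adds nor omits any real mathematical content.
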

\begin{proof}
It suffices to note that for a given natural number $N$, the set
$$\mathbb{Q}/N!=\{{m}/{(N!)^k}\mid m\in\mathbb{Z},k\in\mathbb{N}\}$$
of rational numbers, where $N!=2\times 3\times \cdots \times N$,  is closed under addition  and so satisfies the axioms $\texttt{O}_1$, $\texttt{O}_2$, $\texttt{O}_3$, $\texttt{A}_1$, $\texttt{A}_2$, $\texttt{A}_3$, $\texttt{A}_4$, $\texttt{A}_5$,  $\texttt{A}_6$ and the finite number of the instances of the axiom $\texttt{A}_7$ (for $n=1,\cdots,N$) but does not satisfy the instance of $\texttt{A}_7$ for $n={p}$, where ${p}$ is a prime number larger than $N!$.
\qed\end{proof}


\section{The Chinese Remainders}
For eliminating the quantifiers of the formulas of the structure  $\langle\mathbb{Z};<,+\rangle$, we add the (binary) congruence relations $\{\equiv_{n}\}_{n\geqslant 2}$ (modulo standard natural numbers) to the language; let us note that $a\equiv_n b$ is equivalent with $\exists x (a+n\centerdot x = b)$.
About these congruence relations the following Generalized Chinese Remainder Theorem will be useful later.\\
The Chinese Remainder Theorem has been an important tool in astronomical calculations and in religious
observance (what day does Easter fall on?); it has been a source for mathematical puzzles. It
has been abstracted in algebra to a theorem on the isomorphism of one homomorphic image
of a ring of a given type to a product of two homomorphic images of the ring; it has been applied by computer scientists to obtain multiple precision,
and, somewhere along the way, it has been used in logic as a means of coding finite
sequences \cite{smorynski}.
\bigskip

\subsection{The B\'{e}zout's Theorem}\label{bz}

\begin{lemma}\rm[B\'{e}zout's Identity]
Given integers $a$ and $b$, not both of wich are zero, and for $d$ which is the greatest common divisor of $a$ and $b$, there exist integers $x$ and $y$ such that
$$d=ax+by.$$
\end{lemma}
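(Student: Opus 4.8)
The plan is to prove the identity by exhibiting $d$ as the least positive element of the set $S=\{ax+by \mid x,y\in\mathbb{Z}\}$ of integer linear combinations of $a$ and $b$. First I would observe that $S$ contains positive integers: since $a$ and $b$ are not both zero, at least one of $\pm a,\pm b$ lies in $S$ and is positive, so the set $S^{+}=S\cap\mathbb{N}^{+}$ is non-empty. By the well-ordering principle $S^{+}$ has a least element; call it $g$, and fix $x_0,y_0$ with $g=ax_0+by_0$. The whole argument then reduces to showing that this $g$ equals the greatest common divisor $d$ of $a$ and $b$, i.e.\ that (i) $g$ divides both $a$ and $b$, and (ii) every common divisor of $a$ and $b$ divides $g$.

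For part (i), I would run the division algorithm: write $a=qg+r$ with $0\leqslant r<g$. Then $r=a-qg=a-q(ax_0+by_0)=a(1-qx_0)+b(-qy_0)\in S$. If $r>0$ this would contradict the minimality of $g$ in $S^{+}$, since $r<g$. Hence $r=0$ and $g\mid a$; the same computation with $b$ in place of $a$ gives $g\mid b$. For part (ii), if $c\mid a$ and $c\mid b$, then $c$ divides the combination $ax_0+by_0=g$. Combining (i) and (ii): $g$ is a common divisor of $a$ and $b$ by (i), and by (ii) any common divisor $c$ satisfies $c\mid g$, hence $|c|\leqslant g$; therefore $g$ is the \emph{greatest} common divisor, so $g=d$. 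Taking $x=x_0$ and $y=y_0$ then yields $d=ax+by$.

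There is no serious obstacle here — the only point that needs a little care is making sure $S^{+}$ is non-empty before invoking well-ordering, which is exactly where the hypothesis ``not both of which are zero'' is used, and checking that the remainder $r$ produced by the division algorithm genuinely lies in $S$ (it does, because $S$ is closed under the integer-linear-combination operations, being $\{ax+by\}$). One should also note the edge case where one of $a,b$ is zero, say $b=0$: then $d=|a|$ and $d=a\cdot(\pm 1)+b\cdot 0$ works directly, consistent with the general argument. I would present the main line (non-emptiness of $S^{+}$, least element $g$, division algorithm to get $g\mid a$ and $g\mid b$, and $c\mid g$ for common divisors $c$) and leave these edge-case verifications as immediate remarks.
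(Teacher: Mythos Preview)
Your proof is correct and follows essentially the same approach as the paper: take the least positive integer linear combination of $a$ and $b$ via well-ordering, use the division algorithm to show it divides both $a$ and $b$, and observe that any common divisor divides it. The only differences are cosmetic (you call the least element $g$ and then identify it with $d$, whereas the paper names it $d$ from the start), and your discussion of the edge case $b=0$ is an extra remark not present in the paper.
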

\begin{proof}
Consider the set $S$ of all the positive linear combinations of $a$ and $b$:
$$S=\{au+bv\ |\ u, v\in\mathbb{Z},\ \ au+bv>0\}.$$
Notice first that $S$ is not empty. For example, if $a\neq 0$, then the integer $ |a |=au+b.0$ lies in $S$, where we choose $u=1$ or $u=-1$ according as $a$ is positive or negative. By virtue of the Well-Ordering Principle, $S$ must contain a smallest element $d$. Thus, from the very definition of $S$, there exist integers $x$ and $y$ for which $d=ax+by$ holds. We claim that $d$ is the greatest common divisor of $a$ and $b$.

By the Division Algorithm, we can obtain integers $q$ and $r$ such that $a=qd+r$, where $0\leq r<d$. Then $r$ can be written in the form
\begin{center}
\begin{tabular}{lcl}
$r=a-qd$&$=$&$a-q(ax+by)$\\&$=$&$a(1-qx)+b(-qy)$
\end{tabular}
\end{center}
If $r$ were positive, then this representation would imply that $r$ is a member of $S$, contradicting the fact that $d$ is the least integer in $S$ (recall that $r<d$). Therefore, $r=0$, and so $a=qd$, or equivalently $d | a$. By similar reasoning, $d | b$, the effect of which is to make $d$ a common divisor of $a$ and $b$.

Now if $c$ is an arbitrary positive common divisor of the integers $a$ and $b$, then we conclude that $c | (ax+by)$; that is, $c | d$ and $c= | c |\leq | d | = d$, so that $d$ is greater than every positive common divisor of $a$ and $b$. Piecing the bits of information together, we see that $d$ is the greatest common divisor of $a$ and $b$.

\qed\end{proof}

\subsection{The Chinese Remainder Theorem\label{cr}}

\begin{pro}\rm[Chinese Remainder]\label{cr}
For  integers $n_0,n_1,\cdots,n_k\geqslant 2$ which are pairwise co-prime and arbitrary $t_0,t_1,\cdots,t_k$, there exists some integer $x$ such that  $x\equiv_{n_i} t_i$ for $i=0,\cdots,k$.
\end{pro}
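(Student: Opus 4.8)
The plan is to prove the Chinese Remainder Theorem by induction on $k$, building the solution one congruence at a time. The base case $k=0$ is trivial: take $x = t_0$. For the inductive step, suppose we have already found an integer $y$ satisfying $y \equiv_{n_i} t_i$ for all $i \leqslant k-1$, and we wish to adjust it to also satisfy $x \equiv_{n_k} t_k$ without disturbing the earlier congruences. The natural move is to look for a correction of the form $x = y + N\cdot z$, where $N = n_0 n_1 \cdots n_{k-1}$; any such $x$ automatically still satisfies $x \equiv_{n_i} t_i$ for $i \leqslant k-1$, since $n_i \mid N$. So it remains to choose $z$ so that $y + N z \equiv_{n_k} t_k$, i.e. $N z \equiv_{n_k} t_k - y$.

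The key step is to solve this last congruence for $z$, and this is where B\'ezout's Identity enters. Since $n_0, \dots, n_k$ are pairwise co-prime, $n_k$ is co-prime to each $n_i$ with $i \leqslant k-1$, hence co-prime to their product $N$; thus $\gcd(N, n_k) = 1$. By B\'ezout's Identity (the Lemma of Subsection~\ref{bz}), there are integers $u, v$ with $N u + n_k v = 1$. Multiplying by $t_k - y$ gives $N\big((t_k - y)u\big) \equiv_{n_k} t_k - y$, so setting $z = (t_k - y) u$ and $x = y + N z$ completes the induction.

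The only point requiring a little care — and the place I expect a referee to look closely — is the claim that pairwise co-primality of $n_0, \dots, n_k$ yields $\gcd(N, n_k) = 1$ for the partial product $N = n_0 \cdots n_{k-1}$. This follows because a prime dividing $N$ must divide one of the factors $n_i$ ($i \leqslant k-1$), and such a prime cannot also divide $n_k$ by pairwise co-primality; alternatively one can iterate the elementary fact that if $a$ is co-prime to $b$ and to $c$ then $a$ is co-prime to $bc$. Everything else is routine arithmetic with the congruence relations $\equiv_n$, using only that $n \mid m$ implies $a \equiv_n b$ whenever $a \equiv_m b$, and that $\equiv_{n_k}$ respects addition and multiplication.
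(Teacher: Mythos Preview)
Your proof is correct, but it takes a different route from the paper's. The paper does not induct: it writes down the classical closed-form solution. Setting $m=n_0n_1\cdots n_k$, it observes that $(n_i,\,m/n_i)=1$ for each $i$, applies B\'ezout to get integers $c_i,d_i$ with $c_in_i+d_i(m/n_i)=1$, and then takes
\[
x=\sum_{i=0}^{k} d_i\,t_i\,\frac{m}{n_i},
\]
verifying all $k+1$ congruences simultaneously by noting that for each fixed $j$ the terms with $i\neq j$ are divisible by $n_j$, while the $j$-th term is $t_j(1-c_jn_j)\equiv_{n_j} t_j$. Your inductive argument instead builds the solution one congruence at a time, using B\'ezout only to invert the partial product $N=n_0\cdots n_{k-1}$ modulo $n_k$. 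The paper's approach buys an explicit interpolation-style formula for $x$ (useful if one wants $x$ written directly in terms of the data); your approach is arguably cleaner in isolating exactly where co-primality is used, and in fact it is closer in spirit to how the paper itself proves the \emph{Generalized} Chinese Remainder Theorem (Proposition~\ref{crt}), which does proceed by induction on $k$.
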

\begin{proof}
We take $m=n_0 n_1\cdots n_k$. Since the integers $n_0,n_1,\cdots,n_k\geqslant 2$ are pairwise co-prime, we have:
\begin{equation}\label{chr-01}
\begin{cases}
(n_0,\frac{m}{n_0})=1 \\
(n_1,\frac{m}{n_1})=1 \\
\ \ \ \ \ \vdots\\
(n_k,\frac{m}{n_k})=1
\end{cases}\
\end{equation}
By lemma~\ref{bz} and relation~\eqref{chr-01}, there exist integers $c_0,c_1,\cdots,c_k$ and $d_0,d_1,\cdots,d_k$ such that:
\begin{equation}\label{chr-02}
\begin{cases}
c_0 n_0+d_0\frac{m}{n_0}=1 \\
c_1 n_1+d_1\frac{m}{n_1}=1 \\
\ \ \ \ \ \ \ \ \vdots\\
c_k n_k+d_k\frac{m}{n_k}=1
\end{cases}
\end{equation}
We show that
$$x=\sum_{i=0}^{k} d_i t_i\frac{m}{n_i}$$
satisfies the conclusion of the theorem. \\
For $j=0,\cdots,k$ we have:
$$x=d_j t_j\frac{m}{n_j}+\sum_{i\neq j} d_i t_i\frac{m}{n_i}$$
by~\eqref{chr-02}
$$\ \ \ \ \ \ \ \ \ \ = t_j(1-c_j n_j)+\sum_{i\neq j} d_i t_i\frac{m}{n_i}$$
$$\ \ \ \ \ \ \ \ \ \ \ \ \ \ \ = t_j+n_j(-t_jc_j+\sum_{i\neq j} d_i t_i\frac{m}{n_j n_i})$$
So, $\ x\equiv_{n_j} t_j$ holds for $j=0,\cdots,k$.
\qed\end{proof}

\subsection{The Generalized Chinese Remainder Theorem\label{crt}}

\begin{lemma}\label{max-min}\rm
For integers $n_0,n_1,\cdots,n_{k+1}$ we have:
 \begin{equation*}
n_{k+1}\wedge(n_0\vee n_1\vee\cdots\vee n_k)=(n_{k+1}\wedge n_0)\vee(n_{k+1}\wedge n_1)\vee\cdots\vee(n_{k+1}\wedge n_k),
\end{equation*}
where $n_i\vee n_j=\max\{n_i,n_j\}$ and $n_i\wedge n_j=\min\{n_i,n_j\}$.
\end{lemma}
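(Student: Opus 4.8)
The plan is to prove this identity by a straightforward case analysis on the value of $n_{k+1}$ relative to the maximum $M = \max\{n_0,\dots,n_k\}$, exploiting the fact that $\vee$ and $\wedge$ here are just $\max$ and $\min$ on a linear order, so the distributive lattice laws apply. First I would observe that it suffices to handle the binary case: the identity is the statement that $\min$ distributes over $\max$, i.e. $\min\{c,\max\{a,b\}\} = \max\{\min\{c,a\},\min\{c,b\}\}$, and the $(k+1)$-fold version follows by an easy induction on $k$, writing $n_0\vee\cdots\vee n_k = (n_0\vee\cdots\vee n_{k-1})\vee n_k$ and applying the binary case together with associativity of $\max$.

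For the binary case, set $a = \min\{c,a\}\vee\cdots$ — more precisely, I would split on whether $c = n_{k+1} \leqslant \max\{n_0,\dots,n_k\}$ or $c \geqslant \max\{n_0,\dots,n_k\}$. If $c \geqslant n_i$ for every $i \leqslant k$, then the left-hand side equals $\max\{n_0,\dots,n_k\}$ and each term $c \wedge n_i$ on the right equals $n_i$, so the right-hand side is also $\max\{n_0,\dots,n_k\}$. If instead $c < n_j$ for some $j$, let me further note that for those indices $i$ with $n_i \leqslant c$ we have $c\wedge n_i = n_i \leqslant c$, and for those with $n_i > c$ we have $c \wedge n_i = c$; since at least one index (namely $j$) falls in the latter class, the right-hand side is $\max$ of a collection of values each $\leqslant c$, one of which equals $c$, hence equals $c$. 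On the left, $n_0\vee\cdots\vee n_k \geqslant n_j > c$, so $c \wedge (n_0\vee\cdots\vee n_k) = c$ as well. In every case both sides agree.

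I do not expect any serious obstacle here: the statement is the distributive law for the two-element-generated sublattice of the linearly ordered integers, and on a linear order this is immediate from trichotomy ($\texttt{O}_3$) once one tracks which arguments lie below $c$ and which lie above. The only mild care needed is in the induction step, where one must invoke associativity and commutativity of $\max$ to regroup the $(k+1)$-fold disjunction, but this is routine. The reason for recording the lemma is its later use in the Generalized Chinese Remainder Theorem, where congruence conditions modulo prime powers $p^{n_i}$ must be combined and one needs that the modulus governing a conjunction of such conditions is $p$ raised to the maximum exponent — the lemma being exactly what lets one push a further congruence condition through a disjunction of exponents.
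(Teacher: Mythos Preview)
Your proposal is correct and follows essentially the same approach as the paper: a direct case analysis on whether $n_{k+1}$ dominates all the $n_i$'s or lies strictly below at least one of them. The paper's version differs only cosmetically---it first assumes $n_0\geqslant n_1\geqslant\cdots\geqslant n_k$ without loss of generality and then splits into three cases according to where $n_{k+1}$ lands in that chain, but its cases (b) and (c) both yield $\alpha=\beta=n_{k+1}$ and so collapse to your second case.
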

\begin{proof}
First we take:
$$\beta=(n_{k+1}\wedge n_0)\vee(n_{k+1}\wedge n_1)\vee\cdots\vee(n_{k+1}\wedge n_k)\ \text{and}\ \alpha=n_{k+1}\wedge(n_0\vee n_1\vee\cdots\vee n_k).$$
Without loss of generality, we can assume that $n_0\geqslant n_1\geqslant\cdots\geqslant n_k$. There are three cases to be considered:

\vspace{0.5cm}
\noindent
$(a)$
$n_{k+1}\geqslant n_0$; for which we have
$$\alpha=n_0=\beta.$$
$(b)$
$n_j\geqslant n_{k+1}\geqslant n_{j+1}$ for some $0\leqslant j<k$; for which we have
$$\alpha=n_{k+1}=\beta.$$
$(c)$
$ n_k\geqslant n_{k+1}$; for which we also have
$$\alpha=n_{k+1}=\beta.$$
\qed\end{proof}

\begin{lemma}\rm\label{m}
For integers $n_0,n_1,\cdots,n_k$, let $n$ be the least common multiplier of $n_0,\cdots,n_k$ and $d_{i,j}$ be the greatest common divisor of $n_i$ and $n_j$ for $i\neq j$. Then the greatest common divisor of integers $n$ and $n_{k+1}$ is the least common multiplier of $d_{0,k+1},\cdots,d_{k,k+1}$.
\end{lemma}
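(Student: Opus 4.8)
The plan is to reduce everything to a statement about $p$-adic valuations and then invoke Lemma~\ref{max-min}. First I would observe that we may assume all the $n_i$ are positive: passing from an integer to its absolute value changes neither the least common multiple nor any of the greatest common divisors in question, and in the setting where this lemma is applied (the congruence relations $\equiv_m$ with $m\geqslant 2$) the integers are positive to begin with.

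For a prime $p$, let $v_p(m)$ denote the exponent of $p$ in the prime factorization of the positive integer $m$. Two positive integers are equal exactly when $v_p$ agrees on them for every prime $p$, so it is enough to verify the asserted identity one prime at a time, using the elementary facts that $v_p$ of a least common multiple is the maximum, and $v_p$ of a greatest common divisor is the minimum, of the valuations of the arguments. Fix $p$ and write $a_i=v_p(n_i)$ for $i=0,\dots,k$ and $b=v_p(n_{k+1})$. Then $v_p(n)=\max\{a_0,\dots,a_k\}=a_0\vee\cdots\vee a_k$, so
$$v_p\big(\gcd(n,n_{k+1})\big)=\min\big(a_0\vee\cdots\vee a_k,\;b\big)=b\wedge(a_0\vee\cdots\vee a_k).$$
On the other hand $v_p(d_{i,k+1})=\min(a_i,b)=b\wedge a_i$, hence
$$v_p\big(\operatorname{lcm}(d_{0,k+1},\dots,d_{k,k+1})\big)=(b\wedge a_0)\vee\cdots\vee(b\wedge a_k).$$
By Lemma~\ref{max-min} the two right-hand sides coincide, so these integers have the same $p$-adic valuation. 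As $p$ was arbitrary, $\gcd(n,n_{k+1})=\operatorname{lcm}(d_{0,k+1},\dots,d_{k,k+1})$, which is the claim.

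There is no real obstacle here: once the translation into valuations is made, the combinatorial heart of the matter is exactly the distributive law for $\min$ over $\max$, which is Lemma~\ref{max-min} and has already been proved. The only mild subtlety is the preliminary reduction to positive integers (so that the valuations are defined and the notions of $\gcd$ and $\operatorname{lcm}$ behave as expected), but this is harmless in the intended application.
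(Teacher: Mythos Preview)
Your proof is correct and follows essentially the same approach as the paper: both arguments translate the identity into prime exponents (you phrase this as $p$-adic valuations, the paper writes out the factorizations $n_j=\prod_i\rho_i^{m_i(j)}$ explicitly) and then appeal to Lemma~\ref{max-min} for the distributivity of $\wedge$ over $\vee$. Your remark on reducing to positive integers is a minor addition the paper omits, but otherwise the two proofs are the same.
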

\begin{proof}
Suppose that $\rho_0,\rho_1,\rho_2,\cdots$ is the sequence of all prime numbers $(2,3,5,\cdots)$.
If $n_j=\prod_i\rho_i^{m_i(j)}$ for $j=0,1,\cdots,k+1$, then
$$[n_0,n_1,n_2,\cdots,n_k]=\prod_{i}\rho_i^{m_i(0)\vee m_i(1)\vee\cdots\vee m_i(k)}$$
and
$$d_{j,k+1}=(n_j,n_{k+1})=\prod_{i}\rho_i^{m_i(j)\wedge m_i(k+1)}.$$
So, by Lemma~\ref{max-min}:
\begin{center}
\begin{tabular}{lcl}
$(n_{k+1},\,[n_0,n_1,n_2,\cdots,n_k])$
&\!\!\!\!$=$\!\!\!\!&$\prod_{i}\rho_i^{m_i(k+1)\wedge (m_i(0)\vee m_i(1)\vee\cdots\vee m_i(k))}$
\\
&\!\!\!\!$=$\!\!\!\!&$\prod_{i}\rho_i^{(m_i(k+1)\wedge m_i(0))\vee (m_i(k+1)\wedge m_i(1))\vee\cdots\vee (m_i(k+1)\wedge m_i(k))}$
\\
&\!\!\!\!$=$\!\!\!\!&$[(n_0,n_{k+1}),(n_1,n_{k+1}),\cdots,(n_{k},n_{k+1})]$
\\
&\!\!\!\!$=$\!\!\!\!&$[d_{0,k+1},d_{1,k+1},\cdots,d_{k,k+1}]$.
\end{tabular}
\end{center}
\qed\end{proof}

\begin{pro}[The Generalized Chinese Remainder]\label{crt}{\rm
For integers $t_0,t_1,\cdots,t_k$ and $n_0,n_1,\cdots,n_k\geqslant 2$, we have:
$$\exists x(\bigwedge\hspace{-2.3ex}\bigwedge_{i=0}^{k}x\equiv_{n_i} t_i)\iff\bigwedge\hspace{-4.3ex}\bigwedge_{0\leqslant i<j\leqslant k}t_i \equiv_{d_{i,j}} t_j$$
where $d_{i,j}$ is the greatest common divisor of $n_i$ and $n_j$ for $i\neq j;$ see ~\cite{frankel}.
}\end{pro}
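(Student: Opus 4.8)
The plan is to prove the two implications separately: the forward direction directly, and the converse by induction on $k$, with the two-modulus case acting as the engine of the induction.

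First I would dispose of the ($\Rightarrow$) direction. If $x$ satisfies $x\equiv_{n_i}t_i$ for all $i\leqslant k$, then for each pair $i<j$ the number $d_{i,j}=(n_i,n_j)$ divides both $n_i$ and $n_j$, so reducing $x\equiv_{n_i}t_i$ and $x\equiv_{n_j}t_j$ modulo $d_{i,j}$ yields $x\equiv_{d_{i,j}}t_i$ and $x\equiv_{d_{i,j}}t_j$, whence $t_i\equiv_{d_{i,j}}t_j$ by symmetry and transitivity of $\equiv_{d_{i,j}}$.

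For the converse I would first record a two-modulus sublemma: for $n,n'\geqslant 2$ and any $a,a'$, the system $x\equiv_n a\wedge x\equiv_{n'}a'$ has a solution if and only if $a\equiv_{(n,n')}a'$. This follows at once from B\'ezout's identity (Subsection~\ref{bz}): a solution is the same as a pair of integers $u,v$ with $nu+n'v=a'-a$, and such $u,v$ exist exactly when $(n,n')\mid a'-a$. The base case $k=0$ of the proposition is trivial (take $x=t_0$). For the induction step, suppose the statement holds for any family of $k+1$ moduli, and let $n_0,\dots,n_{k+1}\geqslant 2$ together with $t_0,\dots,t_{k+1}$ satisfy $t_i\equiv_{d_{i,j}}t_j$ for all $0\leqslant i<j\leqslant k+1$. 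By the induction hypothesis applied to $n_0,\dots,n_k$ and $t_0,\dots,t_k$ there is an $x_0$ with $x_0\equiv_{n_i}t_i$ for every $i\leqslant k$; writing $n=[n_0,\dots,n_k]$ for their least common multiple, an integer $x$ satisfies all of $x\equiv_{n_i}t_i$ $(i\leqslant k)$ exactly when $n_i\mid x-x_0$ for all $i$, i.e. when $n\mid x-x_0$, i.e. $x\equiv_n x_0$. Hence a simultaneous solution of all $k+2$ congruences exists iff the two-modulus system $x\equiv_n x_0\wedge x\equiv_{n_{k+1}}t_{k+1}$ is solvable, which by the sublemma happens iff $x_0\equiv_{(n,n_{k+1})}t_{k+1}$. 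Now Lemma~\ref{m} identifies $(n,n_{k+1})$ with $[d_{0,k+1},\dots,d_{k,k+1}]$, so this last condition is equivalent to $x_0\equiv_{d_{i,k+1}}t_{k+1}$ holding for every $i\leqslant k$; and each of these follows by combining $x_0\equiv_{d_{i,k+1}}t_i$ (obtained from $x_0\equiv_{n_i}t_i$ since $d_{i,k+1}\mid n_i$) with the assumed $t_i\equiv_{d_{i,k+1}}t_{k+1}$.

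I expect the main obstacle to lie in the induction step rather than in the sublemma or the forward direction. The two key realizations are that the common solutions of the first $k+1$ congruences form a single residue class modulo the least common multiple $n$, and that Lemma~\ref{m} turns the compatibility of that class with the new modulus $n_{k+1}$ into precisely the pairwise conditions $t_i\equiv_{d_{i,k+1}}t_{k+1}$ that are given. The underlying $\gcd$--$\mathrm{lcm}$ arithmetic needed for Lemma~\ref{m} has already been isolated in Lemma~\ref{max-min}, so no extra number-theoretic work should be required.
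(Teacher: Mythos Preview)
Your proposal is correct and follows essentially the same route as the paper: the forward direction is handled by reducing modulo $d_{i,j}$, and the converse by induction on $k$, using B\'ezout for the two-modulus case and Lemma~\ref{m} to identify $(n,n_{k+1})$ with $[d_{0,k+1},\ldots,d_{k,k+1}]$ in the step. The only difference is organizational: you package the two-modulus case as a reusable sublemma and invoke it in the induction step, whereas the paper treats $k=1$ explicitly and then repeats the B\'ezout construction by hand to build the new solution $y$ from the inductively obtained $x$; your version is slightly more modular but not substantively different.
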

\begin{proof}
The `only if' part is easy: For integers $t_0,t_1,\cdots,t_k$ and $n_0,n_1,\cdots,n_k\geqslant 2$,
suppose that there exists some $x$ such that $x\equiv_{n_i} t_i$ holds for $i=0,\cdots,k$. By $d_{i,j}\mid n_j$ and $d_{i,j}\mid n_i$ for $i\neq j$, we have:
$$x\equiv_{d_{i,j}} t_j\hspace{0.5cm}\text{and}\hspace{0.5cm}x\equiv_{d_{i,j}} t_i.$$
And so, $t_i\equiv_{d_{i,j}} t_j$.\\
We prove the `if' part by induction on $k$. For $k=0$ there is nothing to prove, and for $k=1$ we note that by Lemma~\ref{bz}, there are $a_0,a_1$ such that
\begin{equation}\label{chr-1}
a_0n_0 + a_1n_1 = d_{0,1}.
\end{equation}
Also, by the assumption there exists some $c$ such that
\begin{equation}\label{chr-2}
t_0-t_1=cd_{0,1}.
\end{equation}
Now, if we take $x$ to be $a_0(n_0/d_{0,1})t_1 + a_1(n_1/d_{0,1})t_0$, then by \eqref{chr-1} and \eqref{chr-2} we have
$$x=t_0-a_0n_0c\hspace{0.5cm}\text{and}\hspace{0.5cm}x=t_1+a_1n_1c.$$
And so  we have:
$$x\equiv_{n_0}t_0\hspace{0.5cm}\text{and}\hspace{0.5cm}x\equiv_{n_1}t_1.$$
For the induction step ($k+1$) we note that by the assumption, $t_i \equiv_{d_{i,j}} t_j$ holds for each $0\leqslant i<j\leqslant k+1$, and suppose that the following relations hold for some integer $x$ (the induction hypothesis):
\begin{equation}\label{chr-3}
\begin{cases}
x\equiv_{n_0} t_0 \\
x\equiv_{n_1} t_1 \\
\ \ \ \ \vdots\\
x\equiv_{n_k} t_k
\end{cases}
\end{equation}
Let $n$ be the least common multiplier of $n_0,\cdots,n_k$; then the greatest common divisor  $m$  of $n$ and $n_{k+1}$ is the least common multiplier of $d_{0,k+1},\cdots,d_{k,k+1}$ by Lemma~\ref{m}.\\
Now, by \eqref{chr-3} we have:
\begin{equation}\label{chr-4}
\begin{cases}
x\equiv_{d_{0,k+1}}t_0 \\
x\equiv_{d_{1,k+1}}t_1 \\
\ \ \ \ \vdots\\
x\equiv_{d_{k,k+1}}t_k
\end{cases}
\end{equation}
and by the assumption we have:
\begin{equation}\label{chr-5}
\begin{cases}
t_0\equiv_{d_{0,k+1}}t_{k+1} \\
t_1\equiv_{d_{1,k+1}}t_{k+1} \\
\ \ \ \ \vdots\\
t_k\equiv_{d_{k,k+1}}t_{k+1}
\end{cases}
\end{equation}
so by \eqref{chr-4} and \eqref{chr-5}
\begin{equation}\label{chr-6}
\begin{cases}
x\equiv_{d_{0,k+1}}t_{k+1} \\
x\equiv_{d_{1,k+1}}t_{k+1} \\
\ \ \ \ \vdots\\
x\equiv_{d_{k,k+1}}t_{k+1}
\end{cases}
\end{equation}
thus
$x\equiv_m t_{k+1}$ holds by \eqref{chr-6} and so, for some $c$ we have:
\begin{equation}\label{chr-7}
x-t_{k+1}=mc.
\end{equation}
By Lemma~\ref{bz}, there are $a,b$ such that
\begin{equation}\label{chr-8}
an+bn_{k+1}=m.
\end{equation}
Now, by \eqref{chr-7} and \eqref{chr-8} for $y=x-anc$, we have:
$$y=t_{k+1}+bn_{k+1}c\equiv_{n_{k+1}}t_{k+1}.$$
And also $y\equiv_{n_i}x\equiv_{n_i}t_i$ holds for each $0\leqslant i\leqslant k$.
\qed\end{proof}
\section{Integer Numbers with Order and Addition}
\subsection{Quantifier Elimination of $\langle\mathbb{Z};<,+\rangle$}
Theorem \ref{thm-oaz} has been proved, in various formats, in e.g. the following references:
\cite[Chapter 24]{bbj}, \cite[Theorem 32E]{enderton}, \cite[Corollary 2.5.18]{hinman}, \cite[Secion III, Chapter 4]{kk}, \cite[Corollary 3.1.21]{marker}, \cite[Theorem 13.10]{monk} and  \cite[Section 4, Chapter III]{smorynski}.

 \noindent$\bullet$
 Here, we present a slightly different proof.
 \smallskip
 \begin{convention}\rm
 The Axioms of the Theory of Non-trivial Discretely Ordered Abelian Groups with the Division Algorithm are as follows:
 \begin{equation*}
\begin{tabular}{l}
($\texttt{O}_1$) \; $\forall x,y(x<y\rightarrow y\not<x)$  \\
($\texttt{O}_2$) \; $\forall x,y,z (x<y<z\rightarrow x<z)$  \\
($\texttt{O}_3$) \; $\forall x,y (x<y \vee x=y \vee y<x)$ \\
($\texttt{A}_1$) \; $\forall x,y,z\,(x+(y+z)=(x+y)+z)$ \\
($\texttt{A}_2$) \; $\forall x (x+\mathbf{\mathbf 0}=x)$ \\
($\texttt{A}_3$) \; $\forall x (x+ (-x)=\mathbf{\mathbf 0})$ \\
($\texttt{A}_4$) \; $\forall x,y (x+y=y+x)$ \\
($\texttt{A}_5$) \; $\forall x,y,z(x<y\rightarrow x+z<y+z)$ \\
($\texttt{O}_7^\circ$) \; $\forall x,y \big(x<y\leftrightarrow x+{\mathbf 1}\leqslant y\big)$ \\
($\texttt{A}_7^\circ$)  \; $\forall x\exists  y\,\big(\!\bigvee\hspace{-1.5ex}\bigvee_{i<n}x=n\centerdot y + \bar{i}\,\,\big)$   
\hspace{1em} $n\in\mathbb{N}^+,\,\,\,\,\bar{i}=
\underbrace{{\mathbf 1}+\cdots+{\mathbf 1}}_{\text{i-times}}$
\end{tabular}
\end{equation*}
 \qecon\end{convention}

 \begin{pro}\rm The theory of the structure $\langle\mathbb{Z};<,+,-,{\bf 0},{\bf 1}\rangle$ does not admit quantifier elimination.
 \end{pro}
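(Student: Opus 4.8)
The plan is to exhibit a single formula in the language $\{<,+,-,\mathbf 0,\mathbf 1\}$ which is true in $\langle\mathbb Z;<,+,-,\mathbf 0,\mathbf 1\rangle$ but is not equivalent over this structure to any quantifier-free formula. The natural candidate is a divisibility/congruence statement, since congruences are precisely what is missing from the language and what forces us to enrich it with the relations $\{\equiv_n\}_{n\geqslant 2}$ in the next subsection. Concretely, I would take $\varphi(y):\;\exists x\,(x+x=y)$, i.e.\ the assertion that $y$ is even.

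First I would note that every term of the language in the free variable $y$ is of the form $k\centerdot y+\bar m$ for some $k\in\mathbb Z$ and $\bar m$ an integer constant (a finite sum of $\pm\mathbf 1$'s), so every atomic formula with free variable $y$ is equivalent to one of the form $k\centerdot y\;\Box\;\bar m$ with $\Box\in\{<,=,>\}$. Next I would observe that the solution set in $\mathbb Z$ of any such atomic formula is either all of $\mathbb Z$, a half-line $\{y:y<c\}$ or $\{y:y>c\}$, a single point, or empty. A Boolean combination of finitely many such sets is therefore a finite union of points and half-lines together with possibly a cofinite set — in any case, a set that is eventually periodic with period $1$, meaning it is either finite, cofinite, or differs from $\emptyset$ or $\mathbb Z$ only on a bounded set. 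The set of even integers is none of these: it is infinite, coinfinite, and not bounded away from either $\emptyset$ or $\mathbb Z$ on any tail. Hence $\varphi(y)$ cannot be equivalent to any quantifier-free formula, so the theory does not admit quantifier elimination.

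I expect the only delicate point to be making the ``eventually periodic with period $1$'' claim precise enough to be convincing without a long combinatorial digression; the cleanest phrasing is: if $\psi(y)$ is quantifier-free in $\{<,+,-,\mathbf 0,\mathbf 1\}$ then there is a bound $B$ such that the truth value of $\psi(b)$ is constant for $b>B$ and constant for $b<-B$ (each atomic subformula stabilises, and there are finitely many of them). Since evenness alternates arbitrarily far out, we get a contradiction by comparing $\psi(b)$ and $\psi(b+1)$ for a suitably large $b$. This is a routine argument paralleling the earlier propositions on $\langle\mathbb N;<\rangle$ and $\langle\mathbb Z;<\rangle$, so I would keep it brief and reference Remark~\ref{hn} to dispense with negated atoms.
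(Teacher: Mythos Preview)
Your proposal is correct and follows essentially the same route as the paper: both use the witness formula $\exists x\,(x+x=y)$, reduce atomic formulas in one free variable to the shape $m\centerdot y\;\Box\;k$, and argue that the quantifier-free definable subsets of $\mathbb{Z}$ cannot capture the even integers. Your characterisation of these sets as \emph{eventually constant on both tails} is in fact more accurate than the paper's phrasing ``finite or cofinite'' (which, taken literally, is false---e.g.\ $y<\mathbf{0}$ defines an infinite co-infinite set); your version is the right repair, and the argument goes through cleanly.
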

 \begin{proof}
  It suffices to show that the formula $\exists x (x+x=y)$ is not equivalent with any quantifier-free formula. All the terms including the free variable $y$ in the language $\langle+,-,{\bf 0},{\bf 1}\rangle$ are equal to $m.y$ for some $m\in\mathbb{Z}$, so all the atomic formulas are $m.y=k$, $m.y>k$ or $m.y<k$, for some $m,k\in\mathbb{Z}$. It is easy to see that all the definable sets of the above structure are finite or co-finite, whereas the set $\{y\in\mathbb{Z}\mid\exists x (x+x=y)\}$ is neither finite nor co-finite.
 \qed\end{proof}

\begin{theorem}\rm\label{thm-oaz}
The  infinite theory of non-trivial discretely ordered abelian groups with the division algorithm, that is $\texttt{O}_1$, $\texttt{O}_2$, $\texttt{O}_3$, $\texttt{A}_1$, $\texttt{A}_2$, $\texttt{A}_3$, $\texttt{A}_4$, $\texttt{A}_5$, $\texttt{O}_7^\circ$, $\texttt{A}_7^\circ$,
completely axiomatizes the order and additive theory of the   integer  numbers and, moreover, the \textup{(}theory of the\textup{)} structure $\langle\mathbb{Z};<,+,-,{\bf 0},{\bf 1},\{\equiv_n\}_{n\geqslant 2}\rangle$  admits quantifier elimination,   so has a decidable theory.
\end{theorem}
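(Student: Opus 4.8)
The plan is to verify quantifier elimination through the Main Lemma (Lemma~\ref{mainlem}), working in the language enlarged by the congruences $\{\equiv_n\}_{n\geqslant 2}$, each introduced by its defining axiom $a\equiv_n b\leftrightarrow\exists z\,(a+n\centerdot z=b)$. Since every term is provably equal (using $\texttt{A}_1$--$\texttt{A}_4$) to $m\centerdot x+t$ for an $x$-free term $t$ and an integer $m$, every atomic formula containing $x$ reduces to one of $m\centerdot x<t$, $m\centerdot x>t$, $m\centerdot x=t$, or $m\centerdot x\equiv_n t$. Negated order atoms disappear by Remark~\ref{hn}; a negated congruence $\neg(s\equiv_n t)$ becomes $\bigvee_{0<i<n}\,s\equiv_n t+\bar i$, which is where $\texttt{A}_7^\circ$ enters (it says precisely that every element falls in one of the $n$ residue classes modulo $n$), together with the order axioms for the disjointness of those classes. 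Distributing conjunction over disjunction, it then suffices to eliminate $\exists x$ from a conjunction of atoms of the four shapes above.

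If some conjunct is an equality $m\centerdot x=t$ with $m\neq 0$ (take $m>0$, negating both sides of the equation if need be), I would replace the whole formula by $t\equiv_m 0$ conjoined with the formula obtained by multiplying every other atom through by $m$ and substituting $m\centerdot x$ by $t$, turning $a\centerdot x<s$ into $a\centerdot t<m\centerdot s$ and $a\centerdot x\equiv_n s$ into $a\centerdot t\equiv_{mn}m\centerdot s$; this is already quantifier-free. So I may assume there is no equality conjunct, leaving lower bounds $t_i<m_i\centerdot x$, upper bounds $m_j\centerdot x<s_j$ (all $m_i,m_j>0$), and congruences $m_k\centerdot x\equiv_{n_k}u_k$. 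Let $M$ be the least common multiple of all the $m_i,m_j,m_k$; scaling each atom so that $x$ acquires the coefficient $M$ and writing $y=M\centerdot x$ --- equivalently, quantifying over $y$ subject to the congruence $y\equiv_M 0$ --- brings the formula to the shape
$$\exists y\Big(\bigwedge_i a_i<y\;\wedge\;\bigwedge_j y<b_j\;\wedge\;\bigwedge_k y\equiv_{N_k}c_k\Big).$$

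The heart of the argument is a periodicity observation: the congruence part $C(y):=\bigwedge_k y\equiv_{N_k}c_k$ satisfies $C(y)\leftrightarrow C(y+\bar N)$ for $N:=\operatorname{lcm}(N_k)$, provably from the group axioms and the definition of $\equiv$. Hence, given any witness $y$ meeting all the bounds, subtracting a suitable multiple of $N$ --- obtained from $\texttt{A}_7^\circ$, i.e.\ division with remainder by $N$ --- produces a witness lying in the window $(a_{i_0},a_{i_0}+\bar N]$, where $a_{i_0}$ is a greatest lower bound: it is still above every $a_i$ and, having only been decreased, still below every $b_j$. This yields the explicit quantifier-free equivalent
$$\bigvee_{i_0}\Big(\bigwedge_i a_i\leqslant a_{i_0}\;\wedge\;\bigvee_{r=1}^{N}\big(C(a_{i_0}+\bar r)\;\wedge\;\bigwedge_j a_{i_0}+\bar r<b_j\big)\Big).$$
The degenerate branches are settled the same way: with no lower bounds one uses that the order has no least element (from $\texttt{A}_5$ with $\texttt{O}_7^\circ$) to reduce to $\bigvee_{r=1}^N C(\bar r)$; with no congruences one has $N=1$ and recovers the pattern $\bigwedge_{i,j}\mathfrak{s}(a_i)<b_j$ familiar from Theorem~\ref{thm-oz}; and an inconsistent congruence system simply makes the disjunctions false --- more economically, the system $\bigwedge_k y\equiv_{N_k}c_k$ can be collapsed to a single congruence $y\equiv_N c$ (with a quantifier-free consistency side-condition) via the Generalized Chinese Remainder Theorem (Proposition~\ref{crt}), shrinking the $r$-disjunction to one residue. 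This establishes quantifier elimination for the (theory of the) given structure.

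Finally, decidability and the completeness claim follow along standard lines. By quantifier elimination, modulo the axioms $T$ every sentence is equivalent to a Boolean combination of closed atoms $\bar i<\bar j$, $\bar i=\bar j$ and $\bar i\equiv_n\bar j$, each of which $T$ proves or refutes; thus $T$ is complete, and since $\langle\mathbb{Z};<,+,-,{\bf 0},{\bf 1},\{\equiv_n\}\rangle$ is a model of $T$, that structure's theory is exactly $T$ and the $\{<,+,-,{\bf 0},{\bf 1}\}$-reduct of $T$ axiomatizes the corresponding reduct theory; as $T$ is moreover axiomatizable by a decidable set of sentences, Proposition~\ref{ctd}(2) gives decidability. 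I expect the genuine work to be concentrated in the core step: normalizing the coefficients via the least common multiple, verifying the $N$-periodicity of $C$ inside an arbitrary model of the axioms (so that $\texttt{A}_7^\circ$ is invoked only as stated), and checking that the window search together with every degenerate branch is truly quantifier-free.
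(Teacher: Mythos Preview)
Your proof is correct, and it follows the ``periodicity plus window search'' route that is standard in many presentations of Presburger elimination (Cooper's method, and the treatments in Enderton and Marker). The paper takes a different path after the common normalization step: instead of keeping all bounds and all congruences and searching the $N$-window above a maximal $a_{i_0}$, it first collapses the bounds to at most one on each side by an explicit case-split on which $u_j$ (resp.~$v_k$) is largest (resp.~smallest), and collapses the congruences to a single one $x\equiv_n t$ by invoking the Generalized Chinese Remainder Theorem (Proposition~\ref{crt}); only then does it handle the single remaining case $\exists x\,(x\equiv_n t\wedge u<x\wedge x<v)$ by a short direct calculation. Your approach avoids the need to set up and prove Proposition~\ref{crt} in advance (you only mention CRT as an optional optimization), at the price of a bulkier final quantifier-free formula (a disjunction over the choice of $i_0$ and over residues $r\leqslant N$). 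The paper's approach pays the CRT overhead but arrives at a very compact endgame. Both are valid and well known; the paper's choice fits its emphasis on the CRT machinery developed in Section~\ref{cr}.
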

\begin{proof}
Indeed, the axiom  $\texttt{A}_7^\circ$ is   equivalent with
$$\forall x\bigvee\hspace{-2.2ex}\bigvee_{i<n} \big(x\equiv_n\bar{i}\wedge \bigwedge\hspace{-3.3ex}\bigwedge_{i\neq j<n}x\not\equiv_n\bar{j}\big),$$
\medskip
which is rather easy to verify,
and so the negation signs behind the congruences can be eliminated by
$$(a\not\equiv_n b) \leftrightarrow \bigvee\hspace{-3.3ex}\bigvee_{0<i<n} (a\equiv_n b+\bar{i}\,).$$

\medskip

Since every term containing the variable $x$ is equal to $n\centerdot x + t$, for some $x$-free term $t$ and $n\!\in\!\mathbb{Z}\!-\!\{0\}$, every atomic formula containing $x$ is equivalent with $n\centerdot x \Box t$ where $\Box\!\in\!\{=,<,>,\{\equiv_n\}_{n\geqslant 2}\}$ and $t$ is an $x$-free term. Whence, by Remark~\ref{mainlem}, it suffices to prove the equivalence of the formula

\begin{equation}\label{r-3}
\exists x (\bigwedge\hspace{-2.35ex}\bigwedge_{i<m} a_i\centerdot x\equiv_{n_i}t_i \;\wedge\;   \bigwedge\hspace{-2.35ex}\bigwedge_{j<p} u_j\!<\!b_j\centerdot x \;\wedge\;  \bigwedge\hspace{-2.3ex}\bigwedge_{k<q} c_k\centerdot x\!<\!v_k \;\wedge\;
   \bigwedge\hspace{-2.25ex}\bigwedge_{\ell<r} d_\ell\centerdot x=w_\ell)
\end{equation}

\medskip

with some quantifier-free formula, where $a_i$'s, $b_j$'s, $c_k$'s and $d_\ell$'s are natural numbers and $t_i$'s, $u_j$'s, $v_k$'s and $w_\ell$'s are $x$-free terms.

By the equivalences
\begin{itemize}\itemindent=5ex
\item[(i)] $[a<b] \leftrightarrow [n\centerdot a< n\centerdot b]$,
\item[(ii)] $[a=b] \leftrightarrow [n\centerdot a=n\centerdot b]$,
\item[(iii)]  $[a\equiv_m b] \leftrightarrow [n\centerdot a\equiv_{nm}n\centerdot b]$,
\end{itemize}
which are provable from the axioms, we can assume that $a_i$'s, $b_j$'s, $c_k$'s and $d_\ell$'s  in
the formula~\eqref{r-3} are equal to each other, say to $\alpha$. Now, \eqref{r-3} is equivalent with
\begin{equation}\label{r-4}
\exists y (y\equiv_\alpha{\bf 0} \;\wedge\; \bigwedge\hspace{-2.35ex}\bigwedge_{i<m} y\equiv_{n_i}t_i' \;\wedge\;   \bigwedge\hspace{-2.35ex}\bigwedge_{j<p} u_j'\!<\!y \;\wedge\;  \bigwedge\hspace{-2.3ex}\bigwedge_{k<q} y\!<\!v_k' \;\wedge\;
   \bigwedge\hspace{-2.25ex}\bigwedge_{\ell<r} y=w_\ell'),
\end{equation}
for $y=\alpha\centerdot x$ and some (possibly new) terms $t_i'$'s, $u_j'$'s, $v_k'$'s and $w_\ell'$'s.

\noindent
If $r\neq 0$, then \eqref{r-4} is readily equivalent with the  quantifier-free formula which results from  substituting $w_0'$ with $y$.
So, it suffices to eliminate the quantifier of
\begin{equation}\label{r-5}
\exists x (\bigwedge\hspace{-2.35ex}\bigwedge_{i<m} x\equiv_{n_i}t_i \;\wedge\;   \bigwedge\hspace{-2.35ex}\bigwedge_{j<p} u_j\!<\!x \;\wedge\;  \bigwedge\hspace{-2.3ex}\bigwedge_{k<q} x\!<\!v_k).
\end{equation}
By the equivalence of the formula $\exists x(\theta(x)\wedge u_0\!<\!x \wedge u_1\!<\!x)$ with the   formula
 $$\big[\exists x (\theta(x)\!\wedge\!u_0\!<\!x)\!\wedge\!u_1\!\leqslant\!u_0\big]
\!\vee\!\big[\exists x (\theta(x)\!\wedge\!u_1\!<\!x)\!\wedge\!u_0\!\leqslant\!u_1\big],$$
we can assume that $p\leqslant 1$ (and   $q\leqslant 1$ by a dual argument).
 Also, the following formula with two $x$-congruences
 $$\exists x(\theta(x)\wedge x\equiv_{n_0}t_0  \wedge x\equiv_{n_1}t_1)$$
 is equivalent with the following formula with just one $x$-congruence
$$\exists x(\theta(x)\wedge x\equiv_{n}t)  \wedge t_0\equiv_{d}t_1,$$
where $d$ is the greatest common divisor of $n_0$ and $n_1$, $n$ is their least common multiplier, and  $t=a_0(n_0/d)t_1 + a_1(n_1/d)t_0$ where $a_0,a_1$ satisfy $a_0n_0 + a_1n_1 = d$ (see the proof of Proposition~\ref{crt}). So, we can assume that $m\leqslant 1$ as well.

\noindent
Now, if $m=0$ then the formula~\eqref{r-5} is equivalent with a quantifier-free formula by Theorem~\ref{thm-oz} (with $\mathfrak{s}(x)=x+{\bf 1}$ just like the way formula~\eqref{f-4} was equivalent with some quantifier-free formula).

\noindent
So, suppose   $m=1$. In this case, if any of $p$ or $q$ is equal to $0$ then \eqref{r-5} is equivalent with $\top$ (since any congruence can have infinitely large or infinitely small solutions).

\noindent
Finally, if we have $p=q=1=m$, then the formula
$
\exists x (x\equiv_{n}t \;\wedge\;   u\!<\!x \;\wedge\;  x\!<\!v)
$
is equivalent with the formula $\exists y (r<n\centerdot y\leqslant s)$ for $x=t+n\centerdot y$, $r=u-t$ and $s=v-t-{\bf 1}$. Now, the formula
$\exists y (r<n\centerdot y\leqslant s)$  is   equivalent with   the quantifier-free formula $\bigvee\hspace{-2.25ex}\bigvee_{i<n}(s\equiv_n \bar{i} \;\wedge\;  r+\bar{i}<s)$, since there are some $q$ and some $i<n$ such that $s=qn + i$. The existence of some $y$ such that $r<n\centerdot y\leqslant s$   is then equivalent with   $r<nq$ ($=s-i$).
\qed\end{proof}

\subsection{Non-finite Axiomatizability of $\langle\mathbb{Z};<,+\rangle$}
\begin{pro}\rm\label{rem-infz}
The theory of $\langle\mathbb{Z};<,+\rangle$ cannot be axiomatized finitely.
\end{pro}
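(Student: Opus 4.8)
The plan is to mimic the proof of Proposition~\ref{rem-infq}. By Theorem~\ref{thm-oaz} the theory $T$ of $\langle\mathbb{Z};<,+\rangle$ is completely axiomatized by the finitely many axioms $\texttt{O}_1,\texttt{O}_2,\texttt{O}_3,\texttt{A}_1,\dots,\texttt{A}_5,\texttt{O}_7^\circ$ together with the schema $\texttt{A}_7^\circ$, whose $n$-th instance I will write $\texttt{A}_7^\circ[n]$ (one for each $n\in\mathbb{N}^+$). First I would observe that if $T$ were finitely axiomatizable, then --- $T$ being complete (Remark~\ref{ct}) and hence equal to the deductive closure of each of its axiomatizations --- a single sentence axiomatizing $T$ would follow from the displayed axioms, so by compactness it would follow from the finite axioms together with $\texttt{A}_7^\circ[1],\dots,\texttt{A}_7^\circ[N]$ for some $N$; thus these finitely many sentences would already axiomatize $T$. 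It therefore suffices to show: for every $N$ there is a model of the finite axioms and of $\texttt{A}_7^\circ[1],\dots,\texttt{A}_7^\circ[N]$ which is not a model of $T$ --- concretely, one in which $\texttt{A}_7^\circ[p]$ fails for some prime $p>N$ (note $\texttt{A}_7^\circ[p]$ holds in $\mathbb{Z}$, so it is a theorem of $T$). Equivalently, the theories $T_N=\mathrm{Cn}\big(\{\text{finite axioms}\}\cup\{\texttt{A}_7^\circ[1],\dots,\texttt{A}_7^\circ[N]\}\big)$ form a chain with $\bigcup_N T_N=T$ in which every inclusion $T_N\subseteq T$ is strict, and no finitely axiomatizable theory can be such a union.

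For the witnessing model I would take $G_N=\mathbb{Z}\times(\mathbb{Q}/N!)$, where $\mathbb{Q}/N!=\{m/(N!)^{k}\mid m\in\mathbb{Z},\ k\in\mathbb{N}\}$ is the group of the proof of Proposition~\ref{rem-infq}, with coordinatewise addition and the lexicographic order in which the \emph{first} coordinate is infinitesimal: $(a,h)<(a',h')$ iff $h<h'$, or $h=h'$ and $a<a'$. Then $G_N$ is an abelian group satisfying $\texttt{O}_1$--$\texttt{O}_3$ and $\texttt{A}_1$--$\texttt{A}_5$; its least positive element is $(1,0)$ (any element with positive second coordinate dominates the whole copy $\mathbb{Z}\times\{0\}$), so interpreting $\mathbf{1}$ as $(1,0)$ it satisfies $\texttt{O}_7^\circ$, is discretely ordered, and has $\bar i=(i,0)$. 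Since $n\centerdot G_N=n\mathbb{Z}\times n(\mathbb{Q}/N!)$, the sentence $\texttt{A}_7^\circ[n]$ holds in $G_N$ iff $\mathbb{Q}/N!$ is $n$-divisible; this holds for every $n\leqslant N$, because then every prime factor of $n$ divides $N!$. On the other hand, for a prime $p>N$ we have $p\nmid(N!)^{k}$ for all $k$, so $1/p\notin\mathbb{Q}/N!$; hence the element $(0,1)$ of $G_N$ is not of the form $p\centerdot y+\bar i$ with $i<p$ (that would force $1\in p(\mathbb{Q}/N!)$), so $\texttt{A}_7^\circ[p]$ fails in $G_N$, as required.

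The routine parts --- that the lexicographic order is a linear order compatible with addition, that $(1,0)$ is genuinely the least positive element, and the divisibility computations inside $\mathbb{Q}/N!$ --- I would carry out quickly. The hard part, such as it is, is choosing the model: it must be an honestly discretely ordered abelian group (so that $\mathbf{1}$, the terms $\bar i$, and $\texttt{O}_7^\circ$ all make sense), which forces a copy of $\mathbb{Z}$ into the infinitesimal position of a lexicographic product; and its quotients $G/nG$ must collapse to $\mathbb{Z}/n\mathbb{Z}$ for every $n\leqslant N$ yet be strictly larger for some prime $p>N$. Putting a divisible group such as $\mathbb{Q}$ in the dominant coordinate would instead produce a (nonstandard) model of all of $T$, so it is exactly the truncation to $\mathbb{Q}/N!$ that makes the first $N$ instances of $\texttt{A}_7^\circ$ hold while breaking all later ones --- playing here the role that $\mathbb{Q}/N!$ plays for $\texttt{A}_7$ in Proposition~\ref{rem-infq}. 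A minor additional point is the logical bookkeeping of the first paragraph, which rests only on completeness of $T$ and compactness of first-order logic.
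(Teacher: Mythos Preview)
Your proof is correct and follows essentially the same approach as the paper: both build the lexicographic product of $\mathbb{Z}$ (in the infinitesimal position, carrying the discrete $\mathbf{1}$) with $\mathbb{Q}/N!$ (in the dominant position, controlling divisibility), and observe that $\texttt{A}_7^\circ[n]$ holds precisely when $\mathbb{Q}/N!$ is $n$-divisible. The only cosmetic differences are that the paper writes the factors in the opposite order, $(\mathbb{Q}/N)\times\mathbb{Z}$ with $N=(\mathfrak{p}-1)!$, and does not spell out the compactness reduction you give in your first paragraph.
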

\begin{proof}
We show that $\texttt{O}_1$, $\texttt{O}_2$, $\texttt{O}_3$, $\texttt{A}_1$, $\texttt{A}_2$, $\texttt{A}_3$, $\texttt{A}_4$, $\texttt{A}_5$, $\texttt{O}_7^\circ$ and any finite number of the instances of $\texttt{A}_7^\circ$ cannot prove all the instances of $\texttt{A}_7^\circ$. To see this take $\mathfrak{p}$ to be a sufficiently large prime number and put $N=(\mathfrak{p}-1)!$. Let us recall that the (rational) set   $\mathbb{Q}/N=\{m/N^k\mid m\in\mathbb{Z},k\in\mathbb{N}\}$ (Theorem~\ref{rem-infq}) is closed under the addition operation  and   $x\mapsto x/n$ for any $1<n<\mathfrak{p}$. Define the set  $\mathcal{A}=(\mathbb{Q}/N)\times\mathbb{Z}$ and put  the structure $\mathfrak{A}=\langle\mathcal{A};<_\mathfrak{A},+_\mathfrak{A},
-_\mathfrak{A},{\bf 0}_\mathfrak{A},{\bf 1}_\mathfrak{A}\rangle$ on it by  the following:
\begin{itemize}\itemindent=2em
\item[$(<_\mathfrak{A})$:] $(a,\ell)<_\mathfrak{A}(b,m)\iff (a<b)\vee (a=b\wedge \ell<m)$;
\item[$(+_\mathfrak{A})$:] $(a,\ell)+_\mathfrak{A}(b,m)=(a+b,\ell+m)$;
\item[$(-_\mathfrak{A})$:] $-_\mathfrak{A}(a,\ell)=(-a,-\ell)$;
\item[$({\bf 0}_\mathfrak{A})$:] ${\bf 0}_\mathfrak{A}=({\mathbf0},{\mathbf0})$;
\item[$({\bf 1}_\mathfrak{A})$:] ${\bf 1}_\mathfrak{A}=({\mathbf0},{\mathbf1})$.
\end{itemize}
It is straightforward to see that $\mathfrak{A}$ satisfies the axioms $\texttt{O}_1$, $\texttt{O}_2$, $\texttt{O}_3$, $\texttt{A}_1$, $\texttt{A}_2$, $\texttt{A}_3$, $\texttt{A}_4$, $\texttt{A}_5$ and $\texttt{O}_7^\circ$; but does not satisfy $\texttt{A}_7^\circ$ for $n=\mathfrak{p}$ since  the equality   $({\mathbf1},{\mathbf0})=\mathfrak{p}\centerdot(a,\ell)+\bar{i}$ for any  $a\in\mathbb{Q}/N,\ell\in\mathbb{Z},i\in\mathbb{N}$ (with $i<\mathfrak{p}$) implies that $a=1/\mathfrak{p}$ but $1/\mathfrak{p}\not\in\mathbb{Q}/N$. However, $\mathfrak{A}$ satisfies the finite number of the instances of $\texttt{A}_7^\circ$ (for any $1<n<\mathfrak{p}$): for any element  $(a,\ell)\in\mathcal{A}$ we have $a=m/N^k$ for some $m\in\mathbb{Z}$, $k\in\mathbb{N}$, and $\ell=nq+r$ for  some $q,r$ with $0\leqslant r<n$; now, $(a,\ell)=n\centerdot \big(m'/N^{k+1},q\big)+_\mathfrak{A}(0,r)$ (where $m'=m\cdot (N/n)\in\mathbb{Z}$) and so $(a,\ell)=n\centerdot \big(m'/N^{k+1},q\big)+_\mathfrak{A}\bar{r}$ (where $\bar{r}={\bf 1}_\mathfrak{A}+_\mathfrak{A}\cdots+_\mathfrak{A}{\bf 1}_\mathfrak{A}$ for $r$ times).
\qed\end{proof}


\section{Natural Numbers with Order and Addition}

\subsection{Axiomatization of $\langle\mathbb{N};<,+\rangle$}
\begin{theorem}\rm
The following axioms completely axiomatize the theory of the structure of $\langle\mathbb{N};<,+,{\mathbf 0},{\mathbf 1}\rangle$:
\begin{equation*}
\begin{tabular}{l}
($\texttt{O}_1$)\; $\forall x,y(x<y\rightarrow y\not<x)$  \\
($\texttt{O}_2$) \; $\forall x,y,z (x<y<z\rightarrow x<z)$  \\
($\texttt{O}_3$) \; $\forall x,y (x<y \vee x=y \vee y<x)$ \\
($\texttt{O}_7$) \; $\forall x,y (x\!<\!y \;\leftrightarrow\; x+{\mathbf 1}\!<\!y \vee x+{\mathbf 1}\!=\!y)$ \\
($\texttt{O}_8^\circ$) \; $\forall x\exists y (x\neq {\mathbf 0} \rightarrow y+{\mathbf 1}=x)$ \\
($\texttt{O}_9$) \; $\forall x (x\not<{\mathbf 0}),$ \\
($\texttt{A}_1$) \; $\forall x,y,z\,(x+(y+z)=(x+y)+z)$ \\
($\texttt{A}_2$) \; $\forall x (x+\mathbf{\texttt 0}=x)$ \\
($\texttt{A}_4$) \; $\forall x,y (x+y=y+x)$ \\
($\texttt{A}_5$) \; $\forall x,y,z(x<y\rightarrow x+z<y+z)$ \\
($\texttt{A}_7^\circ$) \;  $\forall x\exists  y\,\big(\bigvee\hspace{-1.5ex}\bigvee_{i<n}x=n\centerdot y + \bar{i}\,\big)$  \qquad \qquad \qquad  $\hspace{-2cm}n\in\mathbb{N}^+,\,\,\,\,\bar{i}=\underbrace{{\mathbf 1}+\cdots+{\mathbf 1}}_{\text{i-times}}$
\end{tabular}
\end{equation*}
and, moreover, the structure $\langle\mathbb{N};<,+,{\mathbf 0},\{\equiv_n\}_{n\geqslant 2}\rangle$ admits quantifier elimination, and so its theory is decidable.
\end{theorem}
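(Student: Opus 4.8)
The plan is to run the quantifier-elimination loop of the proof of Theorem~\ref{thm-oaz}, adjusted to the bounded-below domain $\mathbb{N}$ the way Theorem~\ref{thm-on} adjusts Theorem~\ref{thm-oz}. First I would check, exactly as for $\mathbb{Z}$, that $\texttt{A}_7^\circ$ is provably equivalent to $\forall x\bigvee_{i<n}\big(x\equiv_n\bar i\wedge\bigwedge_{i\neq j<n}x\not\equiv_n\bar j\big)$, so that $a\not\equiv_n b$ can be rewritten as $\bigvee_{0<i<n}(a\equiv_n b+\bar i)$; together with Remark~\ref{hn}, which removes the order-negations, this reduces the problem, via Lemma~\ref{mainlem}, to eliminating $\exists x$ from a conjunction of atoms. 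Since $-$ is not in the language and every $x$-term equals $n\centerdot x+t$ with $n\in\mathbb{N}$ and $t$ an $x$-free term, every such atom is $n\centerdot x\,\Box\,t$ with $\Box\in\{=,<,>\}\cup\{\equiv_m\}_{m\geqslant 2}$, so the target is a formula of the shape~\eqref{r-3}.

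Next I would normalise the coefficient of $x$. The equivalences $[a<b]\leftrightarrow[n\centerdot a<n\centerdot b]$, $[a=b]\leftrightarrow[n\centerdot a=n\centerdot b]$ and $[a\equiv_m b]\leftrightarrow[n\centerdot a\equiv_{nm}n\centerdot b]$ are provable from $\texttt{A}_1,\dots,\texttt{A}_5$, so I may assume all coefficients equal a common $\alpha$, put $y=\alpha\centerdot x$, and adjoin the conjunct $y\equiv_\alpha\mathbf 0$. An equality conjunct $y=w_\ell$ is then removed by substitution, settling that case. Otherwise I am left with $\exists x\big(\bigwedge\text{congruences}\wedge\bigwedge_j u_j<x\wedge\bigwedge_k x<v_k\big)$, and $\mathbf 0\leqslant x$ may always be adjoined by $\texttt{O}_9$. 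Copying the proof of Theorem~\ref{thm-oaz}: the disjunction trick $\exists x(\theta\wedge u_0<x\wedge u_1<x)\equiv[\exists x(\theta\wedge u_0<x)\wedge u_1\leqslant u_0]\vee[\exists x(\theta\wedge u_1<x)\wedge u_0\leqslant u_1]$ collapses the lower bounds to one and (dually) the upper bounds to one, and the two-congruence reduction from that proof collapses the congruences to a single $x\equiv_n t$.

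It then remains to treat $\exists x(x\equiv_n t\wedge u<x\wedge x<v)$ with the congruence, the lower bound $u$, and the upper bound $v$ each possibly absent. With no congruence the formula is purely ordinal and is handled verbatim by the argument of Theorem~\ref{thm-on} with successor $x+\mathbf 1$ (no upper bound: the formula is $\top$; no lower bound: it is $\bigwedge_k\mathbf 0<v_k$; otherwise $\bigwedge_{j,k}u_j+\mathbf 1<v_k$). With a congruence but no upper bound it is $\top$, since every congruence class is cofinal in $\mathbb{N}$ by $\texttt{A}_7^\circ$. In the remaining case I would split, via the equivalent form of $\texttt{A}_7^\circ$, on the residues $a,b<n$ with $u\equiv_n\bar a$ and $t\equiv_n\bar b$: given these, the least $x$ with $u<x$ and $x\equiv_n t$ is $u+\bar c$ for the explicit constant $c=c(a,b)\in\{1,\dots,n\}$, the solutions form $\{\,u+\bar c+n\centerdot w : w\in\mathbb N\,\}$, and $\exists w(u+\bar c+n\centerdot w<v)$ holds iff $u+\bar c<v$ (take $w=\mathbf 0$); hence the formula is equivalent to $\bigvee_{a,b<n}\big(u\equiv_n\bar a\wedge t\equiv_n\bar b\wedge u+\overline{c(a,b)}<v\big)$, and the no-lower-bound variant is the analogous $\bigvee_{b<n}\big(t\equiv_n\bar b\wedge\bar b<v\big)$.

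The only genuine divergence from the integer proof, and the step needing care, is this last residue split together with the implicit bound $\mathbf 0$: since $-$ is unavailable one must realise the ``shift by the distance to the next class member'' purely additively through the standard constants $\overline{c(a,b)}$, and since a congruence class meets an initial segment $[\mathbf 0,v)$ only for some residues, the boundary subcases (when $u$ already lies in the class, when $v\leqslant\bar c$, and the missing-bound cases) must each be verified. None of this is deep, but it is where the $\mathbb{N}$ version does strictly more work than Theorem~\ref{thm-oaz}.
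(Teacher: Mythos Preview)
The paper does not actually prove this result: its entire argument is the one-line citation ``The quantifier elimination of this structure is shown in \cite[Theorem~32E]{enderton}.'' So your sketch supplies far more than the paper does, and your plan---rerun Theorem~\ref{thm-oaz} with the $\mathbb{N}$-boundary adjustments of Theorem~\ref{thm-on}---is the natural and correct one.

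There is, however, one genuine slip. You write that ``every such atom is $n\centerdot x\,\Box\,t$''. Over $\mathbb{Z}$ this follows by subtracting the $x$-free part, but over $\mathbb{N}$ without $-$ an atom $(n_1\centerdot x+r_1)\,\Box\,(n_2\centerdot x+r_2)$ only reduces, after cancelling the smaller $x$-multiple, to the shape $n\centerdot x+s\,\Box\,t$ with $s$ and $t$ \emph{both} $x$-free; the offset $s$ cannot be moved across. Your subsequent normal form $\bigwedge_j u_j<x\wedge\bigwedge_k x<v_k$ and the residue analysis at the end all presuppose $x$ isolated on one side, so this is not merely notational. (For congruences the issue is harmless, since $n\centerdot x+s\equiv_m t$ rewrites as $n\centerdot x\equiv_m t+(m{-}1)\centerdot s$; the problem is with $<$ and $=$.)

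The repair is mechanical. After uniformising the coefficient of $x$ to $\alpha$, let $S$ be the sum of all the offsets appearing on the $x$-side, and add $\sum_{i\neq j}s_i$ to both sides of the $j$th atom so that every $x$-side becomes $\alpha\centerdot x+S$; then substitute $y=\alpha\centerdot x+S$, adjoining $y\equiv_\alpha S$ and the disjunction $S<y\vee S=y$. The equality disjunct is handled by your substitution step, and $S<y$ is simply a new lower-bound atom with $y$ isolated. From that point your argument---the disjunctive collapse to one lower and one upper bound, the Chinese-remainder merge to one congruence, and your additive residue split in place of the subtractive finish of Theorem~\ref{thm-oaz}---goes through as written.
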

\begin{proof}
The quantifier elimination of this structure is shown in \cite[Theorem~32E]{enderton}.
\qed\end{proof}

\subsection{Decidability of $\langle\mathbb{N};<,+\rangle$}
Here, we use the super-structure $\langle\mathbb{Z};<,+\rangle$ to show the decidability of the theory of natural numbers with order and addition.
\begin{remark}\rm\label{nz}
The set of natural numbers is definable in structure $\langle\mathbb{Z};<,+\rangle$ by
$$``x\in\mathbb{N}"\iff\exists y (y\!+\!y\!=\!y\wedge y\leqslant x).$$
\qecon\end{remark}
\begin{theorem}\rm\label{rem-noa}
The theory of the structure $\langle\mathbb{N};<,+\rangle$ is decidable.
\end{theorem}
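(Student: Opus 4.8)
The plan is to reduce the theory of $\langle\mathbb{N};<,+\rangle$ to the theory of $\langle\mathbb{Z};<,+\rangle$, whose decidability is already available from Theorem~\ref{thm-oaz}, and to effect this reduction by relativizing quantifiers to the $\{<,+\}$-definable copy of $\mathbb{N}$ sitting inside $\mathbb{Z}$. (Here I use that a $\{<,+\}$-sentence lies in $\mathrm{Th}\langle\mathbb{Z};<,+\rangle$ exactly when it lies in the decidable theory of the enriched structure $\langle\mathbb{Z};<,+,-,{\bf 0},{\bf 1},\{\equiv_n\}_{n\geqslant 2}\rangle$ of Theorem~\ref{thm-oaz}, since the two structures differ only by symbols absent from the sentence.)

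First I would take the formula $\nu(x)\;:=\;\exists y\,\bigl(y+y=y\wedge(y<x\vee y=x)\bigr)$ from Remark~\ref{nz}, so that $a\in\mathbb{N}\iff\langle\mathbb{Z};<,+\rangle\models\nu(a)$ for every $a\in\mathbb{Z}$; note that $\langle\mathbb{N};<,+\rangle$ is then literally the substructure of $\langle\mathbb{Z};<,+\rangle$ whose universe is defined by $\nu$, and this is genuinely a substructure because $\mathbb{N}$ is closed under $+$ and there are no other function or constant symbols. Next I would define, by recursion on $\{<,+\}$-formulas, the relativization $\varphi\mapsto\varphi^{\mathbb{N}}$ that fixes atomic formulas, commutes with $\neg,\wedge,\vee$, and sends $\exists x\,\psi$ to $\exists x\,(\nu(x)\wedge\psi^{\mathbb{N}})$ and $\forall x\,\psi$ to $\forall x\,(\nu(x)\rightarrow\psi^{\mathbb{N}})$. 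Atomic formulas need no change precisely because $\mathbb{N}$ is a substructure: a term in variables ranging over $\mathbb{N}$ already evaluates to an element of $\mathbb{N}$, and $<$ and $+$ are interpreted identically in the two structures.

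Then I would prove, by a routine induction on $\varphi$, that for every formula $\varphi(x_1,\dots,x_k)$ and all $a_1,\dots,a_k\in\mathbb{N}$,
$$\langle\mathbb{N};<,+\rangle\models\varphi(a_1,\dots,a_k)\iff\langle\mathbb{Z};<,+\rangle\models\varphi^{\mathbb{N}}(a_1,\dots,a_k),$$
the atomic and propositional cases being immediate and the quantifier cases using only that $\nu$ defines $\mathbb{N}$ inside $\langle\mathbb{Z};<,+\rangle$. Specializing to sentences, $\varphi\in\mathrm{Th}\langle\mathbb{N};<,+\rangle\iff\varphi^{\mathbb{N}}\in\mathrm{Th}\langle\mathbb{Z};<,+\rangle$. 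Since $\varphi\mapsto\varphi^{\mathbb{N}}$ is an effective transformation, composing it with the decision procedure for $\mathrm{Th}\langle\mathbb{Z};<,+\rangle$ furnished by Theorem~\ref{thm-oaz} yields a decision procedure for $\mathrm{Th}\langle\mathbb{N};<,+\rangle$.

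I do not expect a genuine difficulty here; the one thing to watch is that the whole argument stays within the language $\{<,+\}$ — the defining formula $\nu$ uses only $<$ and $+$, and $\mathbb{N}$ sits inside $\mathbb{Z}$ as a substructure in this language — so that one may simply invoke decidability of $\mathrm{Th}\langle\mathbb{Z};<,+\rangle$ without having to carry along the constants and congruence relations that Theorem~\ref{thm-oaz} needs for quantifier elimination. A secondary, purely bookkeeping point is to phrase the induction with free variables so that the existential and universal steps go through cleanly.
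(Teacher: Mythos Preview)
Your proposal is correct and follows essentially the same route as the paper: define $\mathbb{N}$ inside $\langle\mathbb{Z};<,+\rangle$ by the formula of Remark~\ref{nz}, relativize quantifiers, and invoke the decidability of $\langle\mathbb{Z};<,+\rangle$ from Theorem~\ref{thm-oaz}. You simply supply more of the details (the substructure check, the induction on formulas, the observation that everything stays in the pure $\{<,+\}$-language) that the paper leaves implicit.
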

\begin{proof}
We show that the decidability of the structure $\langle\mathbb{Z};<,+\rangle$ implies the decidability of the structure $\langle\mathbb{N};<,+\rangle$. Relativization $\psi^{\mathbb{N}}$ of a $\{<,+\}$-formula $\psi$ resulted  from substituting any subformula of the form $\forall x\theta(x)$ by $\forall x [``x\in\mathbb{N}"\!\rightarrow\!\theta(x)]$ and $\exists x\theta(x)$ by $\exists x [``x\in\mathbb{N}"\!\wedge\!\theta(x)]$ by Remark~\ref{nz}  has the following property: $$\langle\mathbb{N};<,+\rangle\models\psi \iff \langle\mathbb{Z};<,+\rangle\models\psi^{\mathbb{N}}.$$
So, the theory of the structure $\langle\mathbb{N};<,+\rangle$ is decidable
\qed\end{proof}


\chapter{Multiplicative Ordered Structures} 
\label{Chapter4} 
In this chapter we consider the theories of the number sets $\mathbb{N},\mathbb{Z},\mathbb{R}$ and $\mathbb{Q}$ over the language $\{<,\times\}$.

\section{Natural numbers with order and multiplication}\label{nom}
\subsection{Non-Axiomatizability of $\langle\mathbb{N};<,\times\rangle$}
\begin{pro}\rm\label{subsec-n}
The theory of the structure $\langle\mathbb{N};<,\times\rangle$ is undecidable.
\end{pro}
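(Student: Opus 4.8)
The plan is to show that the addition operation is first-order definable in $\langle\mathbb{N};<,\times\rangle$, so that the undecidable structure $\langle\mathbb{N};+,\times\rangle$ (G\"odel's incompleteness theorem; see~\cite{enderton}) is, in effect, a definitional expansion of $\langle\mathbb{N};<,\times\rangle$, and the undecidability then transfers downward.

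First I would recover $0$ and the successor function inside $\langle\mathbb{N};<,\times\rangle$ using the order alone: the constant $0$ is defined by $\varphi_{0}(x):\equiv\forall y\,(x\leqslant y)$, and the successor by $``\mathfrak{s}(x)=y\text{''}:\equiv x<y\wedge\neg\exists z\,(x<z<y)$. Next I would invoke the Tarski--Robinson identity: for all $x,y$ and every $z\neq 0$,
$$x+y=z\quad\Longleftrightarrow\quad\mathfrak{s}(x\cdot z)\cdot\mathfrak{s}(y\cdot z)=\mathfrak{s}\big(z\cdot z\cdot\mathfrak{s}(x\cdot y)\big).$$
This is checked by expanding both sides: the left-hand side equals $xyz^{2}+(x+y)z+1$ and the right-hand side equals $xyz^{2}+z^{2}+1$, so the two agree exactly when $(x+y)z=z^{2}$, i.e.\ (for $z\neq 0$) when $x+y=z$. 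The degenerate case is handled directly, since $x+y=0$ is equivalent to $\varphi_{0}(x)\wedge\varphi_{0}(y)$. Unwinding the abbreviations $\mathfrak{s}$ and $0$ into their $\{<,\times\}$-definitions then yields a $\{<,\times\}$-formula $\sigma(x,y,z)$ with $\langle\mathbb{N};<,\times\rangle\models\sigma(a,b,c)$ if and only if $a+b=c$.

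Finally I would run the standard transfer argument. Given any $\{+,\times\}$-sentence $\varphi$, replace the symbol $+$ throughout by $\sigma$ (introducing existential quantifiers for the values of nested sums in the usual way) to obtain a $\{<,\times\}$-sentence $\varphi^{*}$ with $\langle\mathbb{N};+,\times\rangle\models\varphi$ if and only if $\langle\mathbb{N};<,\times\rangle\models\varphi^{*}$; the map $\varphi\mapsto\varphi^{*}$ is clearly computable. Hence a decision procedure for $\mathrm{Th}\langle\mathbb{N};<,\times\rangle$ would give one for $\mathrm{Th}\langle\mathbb{N};+,\times\rangle$, which is impossible.

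I expect the only point requiring care to be the verification of the Tarski--Robinson identity together with its boundary behaviour (the case $z=0$, and confirming that $\mathfrak{s}$ is the genuine successor everywhere, including at $0$); the remainder is bookkeeping. Should the identity route feel awkward, an alternative is to define divisibility and primality (immediate from $\times$) and reconstruct $+$ from $\times$ and $<$ along those lines, but the identity above is the shortest path and is the one I would take.
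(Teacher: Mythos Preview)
Your proposal is correct and follows essentially the same route as the paper: define the successor from $<$, then use the Tarski--Robinson identity to define $+$ from $\mathfrak{s}$ and $\times$ (with the $z=0$ case handled separately), and transfer undecidability from $\langle\mathbb{N};+,\times\rangle$. The only cosmetic difference is that the paper detects the exceptional case by ``$z$ is not a successor'' rather than by your explicit definition of $0$ as the least element.
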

\begin{proof}
First we notice that the addition operation is definable in $\langle\mathbb{N};<,\times\rangle$, since
    \begin{itemize}
    \item[$(1)$] successor  $\mathfrak{s}$ is definable from $<$:
         $$y\!=\!\mathfrak{s}(x) \iff x\!<\!y \wedge \neg\exists z (x\!<\!z\!<\!y);$$
    \item[$(2)$] and  addition  is definable from the successor and multiplication: \\
    \hfill$z\!=\!x\!+\!y  \iff$
        \newline{$\big[\neg\exists u (\mathfrak{s}(u)\!=\!z)\wedge x\!=\!y\!=\!z\big] \vee\big[\exists u (\mathfrak{s}(u)\!=\!z) \wedge \mathfrak{s}(z\cdot x)\cdot \mathfrak{s}(z\cdot y)=
\mathfrak{s}(z\cdot z\cdot \mathfrak{s}(x\cdot y))\big].$}
    \end{itemize}
(The above identity\label{r} was first introduced by Robinson~\cite{robinson}; also see e.g.   \cite[Chapter 24]{bbj} or \cite[Exercise 2 on page 281]{enderton}.)\\
Now by (1) and (2), the structure $\langle\mathbb{N};<,\times\rangle$ can interpret  the structure  $\langle\mathbb{N};+,\times\rangle$ whose theory is  undecidable by G\"odel's Incompleteness theorem. Thus, the theory of the structure $\langle\mathbb{N};<,\times\rangle$ is undecidable (see \cite[Theorem 17.4]{bbj}, \cite[Corollary 35A]{enderton}, \cite[Theorem 4.1.7]{hinman}, \cite[Chapter 15]{monk} or \cite[Corollary 6.4 in Chapter III]{smorynski} for a proof of the undecidability of the structure $\langle\mathbb{N};<,\times\rangle$ and some more details).
\qed\end{proof}
\begin{cor} \rm
The structure $\langle\mathbb{N};<,\times\rangle$ can not be axiomatized by any computably enumerable set of sentences.
\qed\end{cor}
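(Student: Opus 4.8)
The plan is to derive the corollary from the undecidability of $\mathrm{Th}(\langle\mathbb{N};<,\times\rangle)$ just established in Proposition~\ref{subsec-n}, combined with the completeness of the theory of any structure (Remark~\ref{ct}) and Proposition~\ref{ctd}. The argument runs by contraposition: I would assume, towards a contradiction, that some computably enumerable set $\Sigma$ of $\{<,\times\}$-sentences has the property that its set of logical consequences $\mathrm{Cn}(\Sigma)$ equals $\mathrm{Th}(\langle\mathbb{N};<,\times\rangle)$, and then show this forces the theory to be decidable.

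First I would pass from a computably enumerable axiomatization to the notion of axiomatizability used in Definition~\ref{ax}, which asks for a \emph{decidable} set of axioms. This is Craig's trick: given an effective enumeration $\sigma_0,\sigma_1,\dots$ of $\Sigma$, replace each $\sigma_n$ by a logically equivalent sentence $\sigma_n^\ast$ that visibly encodes the index $n$ in its syntactic shape (for instance, conjoining $n$ suitably marked copies of a fixed tautology $\top$). The set $\Sigma^\ast=\{\sigma_n^\ast : n\in\mathbb{N}\}$ is then decidable — from a candidate sentence one reads off $n$ and checks, by running the enumeration $n{+}1$ steps, whether it really is $\sigma_n^\ast$ — while $\mathrm{Cn}(\Sigma^\ast)=\mathrm{Cn}(\Sigma)=\mathrm{Th}(\langle\mathbb{N};<,\times\rangle)$. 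Hence the theory is axiomatizable in the sense of Definition~\ref{ax}. (Alternatively one may bypass Craig's trick and note directly that $\mathrm{Cn}(\Sigma)$ is computably enumerable — dovetail over finite subsets of $\Sigma$ and over derivation lengths — which is all the next step really needs.)

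Next I would invoke Remark~\ref{ct}: the theory of a structure is complete, so in particular $\mathrm{Th}(\langle\mathbb{N};<,\times\rangle)$ is complete. Since $\{<,\times\}$ is a finite language, Proposition~\ref{ctd}(2) — a complete axiomatizable theory is decidable — applies and yields that $\mathrm{Th}(\langle\mathbb{N};<,\times\rangle)$ is decidable. This contradicts Proposition~\ref{subsec-n}, and the contradiction proves the corollary. (Concretely, decidability would follow even without Proposition~\ref{ctd}: enumerate $\mathrm{Cn}(\Sigma)$ and, given a sentence $\sigma$, search until $\sigma$ or $\neg\sigma$ appears — exactly one must, by completeness.)

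I do not anticipate a genuine obstacle; the proof is an assembly of results already present in the excerpt. The only point requiring a sentence of care is the step from ``computably enumerable set of axioms'' to the decidable-axiom-set formulation underlying Definition~\ref{ax} and Proposition~\ref{ctd}, i.e.\ Craig's trick — and even that can be sidestepped by the direct enumeration argument noted above, so the corollary is essentially immediate.
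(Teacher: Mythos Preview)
Your proposal is correct and follows essentially the same route the paper intends: the corollary is marked \qed\ with no explicit argument, relying on the earlier remark (after Proposition~\ref{ctd}) that the theory of an axiomatizable structure is decidable, together with the undecidability just proved in Proposition~\ref{subsec-n}. Your inclusion of Craig's trick is a welcome clarification, since Definition~\ref{ax} requires a \emph{decidable} axiom set while the corollary speaks of computably enumerable ones---the paper silently assumes this bridge, and you have made it explicit.
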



\section{Integer numbers with order and multiplication}
\subsection{Non-Axiomatizability of $\langle\mathbb{Z};<,\times\rangle$}

The undecidability of the theory of the structure $\langle\mathbb{N};+,\times\rangle$ also implies the undecidability of the theories of the structures
$\langle\mathbb{Z};+,\times\rangle$ and $\langle\mathbb{Z};<,\times\rangle$.
\begin{pro}\rm\label{zam}
The theory of the structure $\langle\mathbb{Z};+,\times\rangle$ is undecidable.
\end{pro}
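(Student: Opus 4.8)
The plan is to reduce the undecidability of $\langle\mathbb{N};+,\times\rangle$ --- which holds by G\"odel's incompleteness theorem, just as used in Proposition~\ref{subsec-n} --- to the theory of $\langle\mathbb{Z};+,\times\rangle$ by means of a definable relativization. The key observation is that $\mathbb{N}$ is definable inside $\langle\mathbb{Z};+,\times\rangle$: by Lagrange's four square theorem (recalled in the Introduction of this thesis), an integer is non-negative if and only if it is a sum of four squares, so the formula
$$\nu(x)\;:\equiv\;\exists t,u,v,w\,(x=t\cdot t+u\cdot u+v\cdot v+w\cdot w)$$
defines precisely the set $\mathbb{N}$ as a subset of $\mathbb{Z}$ (note $0=0\cdot 0+0\cdot 0+0\cdot 0+0\cdot 0$, so the zero case is included). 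Moreover, $\mathbb{N}$ is closed under both $+$ and $\times$, so these ambient operations restrict on this definable copy to the arithmetic operations of $\mathbb{N}$.

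Next I would carry out the relativization. To each $\{+,\times\}$-sentence $\sigma$ associate the sentence $\sigma^{\mathbb{N}}$ obtained by replacing every subformula $\forall x\,\theta$ by $\forall x\,(\nu(x)\rightarrow\theta)$ and every subformula $\exists x\,\theta$ by $\exists x\,(\nu(x)\wedge\theta)$; this map $\sigma\mapsto\sigma^{\mathbb{N}}$ is evidently computable. A routine induction on the structure of $\sigma$ --- using that $\nu$ defines exactly $\mathbb{N}$ and that $\mathbb{N}$ is closed under the ambient $+$ and $\times$ --- yields
$$\langle\mathbb{N};+,\times\rangle\models\sigma\quad\Longleftrightarrow\quad\langle\mathbb{Z};+,\times\rangle\models\sigma^{\mathbb{N}}.$$

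Finally I would conclude by contradiction: if $\mathrm{Th}\langle\mathbb{Z};+,\times\rangle$ were decidable, then composing its decision procedure with the computable translation $\sigma\mapsto\sigma^{\mathbb{N}}$ would decide $\mathrm{Th}\langle\mathbb{N};+,\times\rangle$, contradicting the undecidability of first-order arithmetic of the natural numbers. Hence $\mathrm{Th}\langle\mathbb{Z};+,\times\rangle$ is undecidable (and, since $\mathbb{Z}$ is countable and its theory is complete, the structure is not axiomatizable by any computably enumerable set of sentences). There is no genuine obstacle here: the only substantive ingredient is Lagrange's four square theorem, which is classical and already invoked in the excerpt; the one point worth stating with care is the faithfulness of the relativization, which rests entirely on the closure of the definable copy of $\mathbb{N}$ under the operations of $\mathbb{Z}$.
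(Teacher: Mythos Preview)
Your proof is correct and follows essentially the same approach as the paper: define $\mathbb{N}$ in $\langle\mathbb{Z};+,\times\rangle$ via Lagrange's four square theorem and then reduce to the undecidability of $\langle\mathbb{N};+,\times\rangle$ by relativization. The paper's version is terser (it does not spell out the relativization map or the closure remark), but the argument is the same.
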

\begin{proof}
By Lagrange's Four Square Theorem (see e.g. \cite[Theorem 16.6]{monk}) $\mathbb{N}$ is definable in $\langle\mathbb{Z};+,\times\rangle$:
$$u\in\mathbb{N} \iff \exists x,y,z,t (u=x\cdot x+y\cdot y+z\cdot z+t\cdot t).$$
Whence, $\langle\mathbb{N};+,\times\rangle$ is definable in $\langle\mathbb{Z};+,\times\rangle$,
  and so $\langle\mathbb{Z};+,\times\rangle$ has an undecidable theory by G\"odel's Incompleteness theorem (see e.g.  \cite[Theorem 16.7]{monk} or \cite[Corollary 8.29 in Chapter III]{smorynski}).
\qed\end{proof}
\begin{pro}\rm\label{subsec-z}
The theory of the structure $\langle\mathbb{Z};<,\times\rangle$ is undecidable.
\end{pro}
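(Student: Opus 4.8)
The plan is to reduce the statement to Proposition~\ref{subsec-n} (the undecidability of $\langle\mathbb{N};<,\times\rangle$) by realizing $\langle\mathbb{N};<,\times\rangle$ as a definable substructure of $\langle\mathbb{Z};<,\times\rangle$. First I would note that $\mathbf 0$ is definable in $\langle\mathbb{Z};\times\rangle$ already: it is the unique $x$ with $\forall y\,(x\cdot y=x)$, since any such $x$ satisfies $x=x\cdot\mathbf 0=\mathbf 0$, and $\mathbf 0$ itself works. Hence the set of non-negative integers is definable in $\langle\mathbb{Z};<,\times\rangle$ by the formula $\mathsf D(x)\;\equiv\;\exists z\big(\forall y\,(z\cdot y=z)\wedge(z<x\vee z=x)\big)$, whose extension in $\mathbb{Z}$ is precisely $\{0,1,2,\dots\}$.

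Next I would observe that $\mathsf D^{\mathbb{Z}}$ is closed under $\times$ and carries the same order and multiplication as $\mathbb{N}$, so the inclusion map is an isomorphism of $\langle\mathbb{N};<,\times\rangle$ onto the substructure of $\langle\mathbb{Z};<,\times\rangle$ with universe $\mathsf D^{\mathbb{Z}}$. Therefore, letting $\varphi^{\mathsf D}$ denote the relativization of a $\{<,\times\}$-sentence $\varphi$ to $\mathsf D$ (replace every $\forall u\,\theta$ by $\forall u\,(\mathsf D(u)\to\theta)$ and every $\exists u\,\theta$ by $\exists u\,(\mathsf D(u)\wedge\theta)$), one gets $\langle\mathbb{N};<,\times\rangle\models\varphi$ if and only if $\langle\mathbb{Z};<,\times\rangle\models\varphi^{\mathsf D}$. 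The map $\varphi\mapsto\varphi^{\mathsf D}$ is computable, so a decision procedure for $\mathrm{Th}\langle\mathbb{Z};<,\times\rangle$ would yield one for $\mathrm{Th}\langle\mathbb{N};<,\times\rangle$, contradicting Proposition~\ref{subsec-n}. This finishes the proof, and the corollary that $\langle\mathbb{Z};<,\times\rangle$ is not axiomatizable by any computably enumerable set of sentences then follows exactly as in the case of $\mathbb{N}$.

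I do not expect a genuine obstacle here: the whole content is the one-line observation that $\mathbf 0$ — and hence the copy of $\mathbb{N}$ sitting inside $\mathbb{Z}$ — is first-order definable from $\times$ (and $<$) without reference to $+$; the remainder is the routine relativization argument for interpretations. An alternative route, closer to the proof of Proposition~\ref{subsec-n}, is to define addition on all of $\mathbb{Z}$ inside $\langle\mathbb{Z};<,\times\rangle$ directly: the successor $\mathfrak s$ is definable from $<$ because the order is discrete, Robinson's identity then defines $+$ on the non-negative part, and a short split into the four sign cases extends it to all of $\mathbb{Z}$; one then invokes the undecidability of $\langle\mathbb{Z};+,\times\rangle$ from Proposition~\ref{zam}. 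I would present the first route, which sidesteps the sign bookkeeping entirely.
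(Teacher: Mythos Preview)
Your argument is correct and takes a genuinely different route from the paper. The paper does not relativize to $\mathbb{N}$; instead it defines addition on all of $\mathbb{Z}$ inside $\langle\mathbb{Z};<,\times\rangle$ (defining ${\bf 0},{\bf 1},{\bf -1}$, the map $x\mapsto -x$, the successor $\mathfrak{s}$ from $<$, and then $+$ via Robinson's identity with a separate clause for the case $z={\bf 0}$), and then invokes Proposition~\ref{zam} on $\langle\mathbb{Z};+,\times\rangle$. This is precisely the ``alternative route'' you sketch at the end. Your primary route is shorter and more elementary: once ${\bf 0}$ is defined, the formula ${\bf 0}\leqslant x$ cuts out $\mathbb{N}$, and a plain relativization to Proposition~\ref{subsec-n} finishes the job with no need to reconstruct $+$ or handle signs. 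What the paper's approach buys in exchange is that it exhibits $\langle\mathbb{Z};<,\times\rangle$ and $\langle\mathbb{Z};+,\times\rangle$ as mutually interpretable (since $<$ is already definable from $+,\times$ by Lagrange), which is a slightly stronger structural statement than mere undecidability.
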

\begin{proof}
First we notice that the following numbers and  operations   are definable in the structure $\langle\mathbb{Z};<,\times\rangle$:
   \begin{itemize}
   \item[--] The number zero:
   $$u={\mathbf 0} \iff \forall x (x\cdot u = u).$$
   \item[--] The number one:
   $$u={\mathbf 1} \iff \forall x (x\cdot u =x).$$
   \item[--] The number $-{\mathbf 1}$:
   $$u=-{\mathbf 1} \iff u\cdot u={\mathbf 1} \wedge u\neq {\mathbf 1}.$$
   \item[--]  The additive inverse:
   $$y=-x \iff y=(-{\mathbf 1})\cdot x.$$
   \item[--] The successor:
   $$y=\mathfrak{s}(x) \iff x<y \wedge \neg\exists z (x<z<y).$$
   \item[--] The addition:
   $$z=x+y \iff [z={\mathbf 0}\wedge y=-x] \vee [z\neq {\mathbf 0}\wedge \mathfrak{s}(z\cdot x)\cdot \mathfrak{s}(z\cdot y)=
\mathfrak{s}(z\cdot z\cdot \mathfrak{s}(x\cdot y))].$$
   \end{itemize}
There is another beautiful
  definition for $+$ in terms of $\mathfrak{s}$ and $\times$ in $\mathbb{Z}$ in \cite[p.~187]{hinman}:\\

    \hfill     $z=x+y \iff$\\
    \hfill$[z\cdot\mathfrak{s}(z)=z\wedge\mathfrak{s}(x\cdot y)=\mathfrak{s}(x)\cdot\mathfrak{s}(y)]
    \vee[z\cdot\mathfrak{s}(z)\neq z\wedge \mathfrak{s}(z\cdot x)\cdot \mathfrak{s}(z\cdot y)=
\mathfrak{s}(z\cdot z\cdot \mathfrak{s}(x\cdot y))]$.

\noindent And so, the structure $\langle\mathbb{Z};+,\times\rangle$ somehow includes the structure $\langle\mathbb{Z};<,\times\rangle$.
By Proposition~\ref{zam}, the theory of the structure $\langle\mathbb{Z};+,\times\rangle$ is undecidable. Thus the theory of the structure $\langle\mathbb{Z};<,\times\rangle$ is undecidable too.
\qed\end{proof}
\begin{cor}\rm
The structure $\langle\mathbb{Z};<,\times\rangle$ can not be axiomatized by any computably enumerable set of sentences.
\qed\end{cor}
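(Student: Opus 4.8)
The plan is to deduce the non-axiomatizability directly from the undecidability just proved, using the completeness of the theory of a structure. Write $T=\mathrm{Th}(\langle\mathbb{Z};<,\times\rangle)$ for the set of all $\{<,\times\}$-sentences true in $\langle\mathbb{Z};<,\times\rangle$. First I would record two facts already available: by Remark~\ref{ct} the theory $T$ is complete, i.e.\ for every $\{<,\times\}$-sentence $\sigma$ either $\sigma\in T$ or $\neg\sigma\in T$; and by Proposition~\ref{subsec-z} the theory $T$ is undecidable.

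Next I would argue by contradiction. Suppose some computably enumerable set $\Sigma$ of $\{<,\times\}$-sentences axiomatized $\langle\mathbb{Z};<,\times\rangle$, meaning that $T$ coincides with the set of logical consequences of $\Sigma$. By Craig's trick one may replace $\Sigma$ by a \emph{decidable} set $\Sigma'$ of sentences with the same deductive closure (pad the $n$-th enumerated sentence into an $n$-fold conjunction of copies of itself, so that membership in $\Sigma'$ becomes decidable while $\Sigma'$ and $\Sigma$ are logically equivalent). Then $T$ is the set of logical consequences of the decidable set $\Sigma'$, hence $T$ is axiomatizable in the sense of Definition~\ref{ax}; and Proposition~\ref{ctd}(2) --- a complete axiomatizable theory is decidable --- forces $T$ to be decidable, contradicting Proposition~\ref{subsec-z}. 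Therefore no computably enumerable set of sentences axiomatizes $\langle\mathbb{Z};<,\times\rangle$.

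I do not anticipate a real obstacle: this is the standard principle that completeness together with undecidability rules out any reasonable axiomatization, and the only step needing a word of care is the passage from a merely computably enumerable axiom set to a decidable one. That gap is closed by Craig's trick, after which the conclusion is an immediate application of the already-proved Proposition~\ref{ctd}. Alternatively, one could avoid Craig's trick altogether by observing directly, via G\"odel's completeness theorem, that the set of logical consequences of a computably enumerable set of sentences is again computably enumerable, and that a complete computably enumerable theory is decidable by searching the enumeration for $\sigma$ or $\neg\sigma$.
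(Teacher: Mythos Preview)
Your argument is correct and is exactly the intended one: the paper treats this corollary as immediate (just a \qed), relying on the principle stated in the Introduction that the theory of a structure is either decidable or not axiomatizable by any computably enumerable set of sentences, which is precisely the combination of Remark~\ref{ct}, Proposition~\ref{ctd}(2), and Craig's trick that you spell out.
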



\section{Real numbers with order and multiplication}\label{subsec-r}
The structure $\langle\mathbb{R};<,\times\rangle$ is decidable since by a theorem of Tarski  the (theory of the)
structure $\langle\mathbb{R};<,+,\times\rangle$ can be completely axiomatized by the theory of {\em real closed ordered fields}, and so has a decidable theory; see e.g. \cite[Theorem 7, Chapter 4]{kk}, \cite[Theorem 3.3.15]{marker} or \cite[Theorem 21.36]{monk}.
\begin{cor}\rm
For the reason that the structure $\langle\mathbb{R};<,\times\rangle$ is included in the structure $\langle\mathbb{R};<,+,\times\rangle$, the theory of the structure $\langle\mathbb{R};<,\times\rangle$ is also decidable.
\qed\end{cor}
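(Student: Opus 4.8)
The plan is to obtain the decidability of $\mathrm{Th}(\langle\mathbb{R};<,\times\rangle)$ as an immediate consequence of the decidability of $\mathrm{Th}(\langle\mathbb{R};<,+,\times\rangle)$, which is available from Tarski--Seidenberg's theorem (the real closed ordered fields completely axiomatize $\langle\mathbb{R};<,+,\times\rangle$, by the references cited just above). The whole point is the purely syntactic observation that $\{<,\times\}$ is a sublanguage of $\{<,+,\times\}$, so every $\{<,\times\}$-sentence is in particular a $\{<,+,\times\}$-sentence, and truth of a sentence in a structure depends only on the interpretations of the non-logical symbols actually occurring in it.

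First I would record the equivalence that for every $\{<,\times\}$-sentence $\sigma$,
$$\langle\mathbb{R};<,\times\rangle\models\sigma \iff \langle\mathbb{R};<,+,\times\rangle\models\sigma,$$
so that $\mathrm{Th}(\langle\mathbb{R};<,\times\rangle)$ is exactly the set of $\{<,\times\}$-sentences lying in $\mathrm{Th}(\langle\mathbb{R};<,+,\times\rangle)$. Then I would invoke the decision procedure for the richer structure: on input $\sigma$, first check effectively that $\sigma$ uses no symbol outside $\{<,\times\}$, and if so run the decision algorithm for $\mathrm{Th}(\langle\mathbb{R};<,+,\times\rangle)$ on $\sigma$ and output its verdict. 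By the displayed equivalence this correctly decides membership in $\mathrm{Th}(\langle\mathbb{R};<,\times\rangle)$, which by Remark~\ref{ct} and Proposition~\ref{ctd} is the same as showing that this (complete) theory is decidable.

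There is essentially no obstacle here: the argument is the routine ``decidability is inherited by reducts'' principle, and the transfer runs in the easy direction (a reduct's theory in the smaller language is a subset of the larger theory, recovered from it by an effective syntactic filter on the occurring symbols). The only thing worth stating with care is that language restriction, not some form of interpretation, already suffices. This should be contrasted with the self-contained route promised in Theorem~\ref{thm-or}, where an \emph{explicit} axiomatization and a direct quantifier-elimination argument for $\langle\mathbb{R};<,\times\rangle$ are carried out without appealing to the heavy machinery of real closed fields.
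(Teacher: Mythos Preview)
Your argument is correct and is precisely the content the paper intends: the corollary is stated with an immediate \qed\ and no separate proof, relying on exactly the ``reduct inherits decidability'' observation you spell out. You have simply made explicit what the paper leaves tacit.
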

$\bullet$
Here, we prove the decidability of this theory directly (without using Tarski's theorem) and provide an explicit axiomatization for it.

\subsection{Axiomatization and Quantifier Elimination of $\langle\mathbb{R};<,\times\rangle$}
First we study the structure $\langle\mathbb{R}^+;<,\times\rangle$.
\begin{pro}\rm\label{thm-or+}
The following infinite theory \textup{(}of the non-trivial ordered divisible abelian groups\textup{)}
completely axiomatizes the order and multiplicative theory of the positive real numbers:

\begin{tabular}{l}
($\texttt{O}_1$) \; $\forall x,y(x<y\rightarrow y\not<x)$  \\ ($\texttt{O}_2$) \; $\forall x,y,z (x<y<z\rightarrow x<z)$ \\
($\texttt{O}_3$) \; $\forall x,y (x<y \vee x=y \vee y<x)$ \\
($\texttt{M}_1$) \; $\forall x,y,z\,(x\cdot(y\cdot z)=(x\cdot y)\cdot z)$ \\
($\texttt{M}_2$) \; $\forall x (x\cdot\mathbf{1}=x)$ \\
($\texttt{M}_3$) \; $\forall x (x\cdot  x^{-1}=\mathbf{1})$ \\
($\texttt{M}_4$) \; $\forall x,y (x\cdot y=y\cdot x)$ \\
($\texttt{M}_5$) \; $\forall x,y,z(x<y\rightarrow x\cdot z<y\cdot z)$ \\
($\texttt{M}_6$) \; $\exists y (y\neq {\bf 1})$ \\
($\texttt{M}_7$) \; $\forall x\exists y(x=y^n)$  \qquad \qquad \qquad \qquad  $n\geqslant 2$
\end{tabular}

\noindent
The structure $\langle\mathbb{R}^+;<,\times,\square^{-1},{\bf 1}\rangle$  admits quantifier elimination, and so its theory is decidable.
\end{pro}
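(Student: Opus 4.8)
The plan is to reduce everything to Theorem~\ref{thm-oa} by means of the order isomorphism $\log\colon\mathbb{R}^+\to\mathbb{R}$ (with inverse $\exp$). First I would check that $\log$ is an isomorphism of ordered structures from $\langle\mathbb{R}^+;<,\times,\square^{-1},{\bf 1}\rangle$ onto $\langle\mathbb{R};<,+,-,{\bf 0}\rangle$: it is a bijection, it is strictly increasing (so it preserves and reflects $<$), and it satisfies $\log(x\cdot y)=\log x+\log y$, $\log(x^{-1})=-\log x$ and $\log{\bf 1}={\bf 0}$. Consequently the two structures have identical first-order theories once the symbols are matched by $\times\leftrightarrow+$, $\square^{-1}\leftrightarrow-$, ${\bf 1}\leftrightarrow{\bf 0}$, $<\leftrightarrow<$.

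Next I would observe that, under exactly this matching, the axioms $\texttt{M}_1,\dots,\texttt{M}_7$ are the literal translations of the axioms $\texttt{O}_1,\texttt{O}_2,\texttt{O}_3,\texttt{A}_1,\dots,\texttt{A}_7$ of the theory of non-trivial ordered divisible abelian groups used in Theorem~\ref{thm-oa}; in particular the scheme $\texttt{M}_7$ ($\forall x\exists y\,(x=y^n)$ for $n\geqslant 2$) corresponds to the divisibility scheme $\texttt{A}_7$ ($\forall x\exists y\,(x=n\centerdot y)$ for $n\in\mathbb{N}^+$), the omitted instance $n=1$ being trivially true. Thus the infinite theory displayed in the statement is nothing but the theory of non-trivial ordered divisible abelian groups, written multiplicatively.

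The proposition then follows: the structure $\langle\mathbb{R}^+;<,\times,\square^{-1},{\bf 1}\rangle$ is a model of this theory, being isomorphic to $\langle\mathbb{R};<,+,-,{\bf 0}\rangle$, which is one by Theorem~\ref{thm-oa}; the theory admits quantifier elimination and is complete, since these are properties of the theory itself, invariant under a bijective relabelling of the non-logical symbols, and Theorem~\ref{thm-oa} establishes both for the additive presentation. Finally, a complete axiomatizable theory is decidable by Proposition~\ref{ctd}, so the theory of $\langle\mathbb{R}^+;<,\times,\square^{-1},{\bf 1}\rangle$ is decidable and is completely axiomatized by $\texttt{M}_1$--$\texttt{M}_7$. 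There is no genuine obstacle here; the only points needing a moment's care are that $\log$ is order-preserving (so the isomorphism is one of \emph{ordered} structures) and that quantifier elimination, being syntactic, transfers across the renaming. Should one prefer to avoid the transfer argument, one can instead rerun the quantifier-elimination computation from the proof of Theorem~\ref{thm-oa} verbatim, replacing $+$ by $\times$, the term $n\centerdot x$ by $x^n$, and $-$ by inversion: every step uses only the ordered-divisible-abelian-group axioms and so goes through unchanged.
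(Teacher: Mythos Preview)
Your proposal is correct and follows essentially the same approach as the paper: the paper's proof consists of a single observation that $x\mapsto\log(x)$ is an isomorphism from $\langle\mathbb{R}^+;<,\times\rangle$ onto $\langle\mathbb{R};<,+\rangle$, after which Theorem~\ref{thm-oa} applies. Your write-up simply spells out in more detail the points the paper leaves implicit (the symbol-by-symbol correspondence of the axioms and the transfer of quantifier elimination and completeness across the renaming).
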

\begin{proof}
The structure $\langle\mathbb{R}^+;<,\times\rangle$ (of the positive real numbers) is (algebraically) isomorphic to the structure $\langle\mathbb{R};<,+\rangle$ by the mapping $x\mapsto\log(x)$. So, Theorem~\ref{thm-oa} implies the decidability of the structure $\langle\mathbb{R}^+;<,\times\rangle$.
\qed\end{proof}
\begin{pro}\rm\label{thm-or}
The following infinite theory completely axiomatizes the order and multiplicative theory of the   real  numbers:

\begin{tabular}{l}
($\texttt{O}_1$) \; $\forall x,y(x<y\rightarrow y\not<x)$  \\ ($\texttt{O}_2$) \; $\forall x,y,z (x<y<z\rightarrow x<z)$ \\
($\texttt{O}_3$) \; $\forall x,y (x<y \vee x=y \vee y<x)$ \\
($\texttt{M}_1$) \; $\forall x,y,z\,(x\cdot(y\cdot z)=(x\cdot y)\cdot z)$ \\
($\texttt{M}_2^\circ$) \; $\forall x (x\cdot\mathbf{1}=x  \;\,  \wedge \;\, x\cdot{\bf 0}={\bf 0}={\bf 0}^{-1})$ \\
($\texttt{M}_3^\circ$) \; $\forall x (x\neq {\bf 0}\rightarrow x\cdot  x^{-1}=\mathbf{1})$ \\
($\texttt{M}_4$) \; $\forall x,y (x\cdot y=y\cdot x)$ \\
($\texttt{M}_5^\circ$) \; $\forall x,y,z(x<y\wedge {\bf 0}<z \rightarrow x\cdot z<y\cdot z)$ \\
($\texttt{M}_5^\bullet$) \; $\forall x,y,z(x<y\wedge z<{\bf 0} \rightarrow y\cdot z<x\cdot z)$  \\
($\texttt{M}_6^\circ$) \; $\exists y ({\bf -1}<{\bf 0}<{\bf 1}<y)$
\\
($\texttt{M}_{7}^\circ$) \;  $\forall x\exists y (x=y^{2n+1})\qquad\qquad n\in\mathbb{N}$
\\
($\texttt{M}_{8}$) \;  $\forall
x (x^{2n}={\bf 1}\longleftrightarrow x={\bf 1}\vee x={\bf -1})\qquad\qquad n\in\mathbb{N}$ \\
($\texttt{M}_{9}$) \; $\forall x\,({\bf 0}<x\longleftrightarrow \exists y [y\neq{\bf 0}\wedge x=y^2])$
\end{tabular}

\noindent
and, moreover, the structure $\langle\mathbb{R};<,\times,\square^{-1},{\bf -1},{\bf 0},{\bf 1}\rangle$  admits quantifier elimination, and so its theory is decidable.
\end{pro}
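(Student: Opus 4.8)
The plan is to run the quantifier-elimination argument of Theorem~\ref{thm-oa} inside the multiplicative group of the positive cone, where quantifier elimination is already available by Proposition~\ref{thm-or+}, and to reduce the general case to that one by splitting on the sign of the quantified variable. Write $T$ for the listed axiom set. By Lemma~\ref{mainlem} and Remark~\ref{hn} it suffices to eliminate $\exists x$ from $\exists x\bigwedge_i\alpha_i$ with each $\alpha_i$ an equation or strict inequality between terms. Using $\texttt{M}_1$--$\texttt{M}_4$ together with $\texttt{M}_2^\circ,\texttt{M}_3^\circ$, every term is provably in $T$ equal to $\pm\,x^{a}\cdot m$ for some $a\in\mathbb{Z}$ and some $x$-free monomial $m$, or else provably equal to ${\bf 0}$ (the latter yielding only finitely many, trivial, cases); so one may assume each $\alpha_i$ has the form $A\cdot x^{a}\ \square\ B\cdot x^{b}$ with $\square\in\{=,<\}$ and $A,B$ $x$-free monomials.

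Next I would split off the sign of $x$: by $\texttt{O}_3$ and since the term $(-{\bf 1})\cdot x$ plays the role of $-x$ (using $\texttt{M}_8$ to get $(-{\bf 1})^2={\bf 1}$ and $\texttt{M}_5^\bullet$ for its order behaviour), the formula $\exists x\bigwedge_i\alpha_i$ is $T$-equivalent to $\big(\bigwedge_i\alpha_i\big)[x:={\bf 0}]\vee\exists x\big({\bf 0}<x\wedge\bigwedge_i\alpha_i\big)\vee\exists y\big({\bf 0}<y\wedge\bigwedge_i\alpha_i[x:=(-{\bf 1})\cdot y]\big)$; the first disjunct is quantifier-free and the last has the same shape as the middle one, so only $\exists x\big({\bf 0}<x\wedge\bigwedge_i\alpha_i\big)$ remains. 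Here I would first verify, as in the computations supporting Proposition~\ref{thm-or+}, that $T$ proves the positive cone $P=\{x:{\bf 0}<x\}$ to be closed under $\cdot$ and $\square^{-1}$ and to be, with $<$, a non-trivial ordered divisible abelian group: closure and non-triviality come from $\texttt{M}_2^\circ,\texttt{M}_5^\circ,\texttt{M}_5^\bullet,\texttt{M}_6^\circ$, while divisibility follows by writing $n=2^k\cdot(\text{odd})$ and using $\texttt{M}_7^\circ$ for odd roots and $\texttt{M}_9$ for square roots of positives, with $\texttt{M}_8$ controlling the sign ambiguity of even roots. Consequently Proposition~\ref{thm-or+} (equivalently Theorem~\ref{thm-oa} through $x\mapsto\log x$) yields quantifier elimination for formulas whose quantifier is relativised to $P$.

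To finish, under the standing hypothesis ${\bf 0}<x$ I would pass to a finite disjunction over the quantifier-free, $x$-free sign patterns of the coefficient monomials $A,B$, rewriting any negative coefficient $A$ as $(-{\bf 1})\cdot A$ (and flipping the inequality via $\texttt{M}_5^\bullet$) so that all coefficients are positive. In each branch, since ${\bf 0}<x$ forces ${\bf 0}<x^{a}$, every $\alpha_i$ collapses either to a sign condition on the parameters --- $x$-free, hence kept verbatim --- or, after multiplying by a positive inverse, to an atomic formula $x^{c}\ \square\ D$ of the positive cone with $D$ a positive $x$-free term; applying the relativised quantifier elimination of the previous paragraph to $\exists x\!\in\!P\,(\cdots)$ then removes $\exists x$. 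For completeness: the only quantifier-free sentences involve just ${\bf -1},{\bf 0},{\bf 1}$, and $T$ (through $\texttt{M}_2^\circ,\texttt{M}_3^\circ,\texttt{M}_6^\circ,\texttt{M}_8$ and the group and order axioms) decides each of them in agreement with $\langle\mathbb{R};<,\times\rangle$; together with the quantifier elimination this makes $T$ complete, and since $\langle\mathbb{R};<,\times,\square^{-1},{\bf -1},{\bf 0},{\bf 1}\rangle\models T$ we conclude that $T$ axiomatises $\mathrm{Th}\langle\mathbb{R};<,\times,\square^{-1},{\bf -1},{\bf 0},{\bf 1}\rangle$, which is therefore decidable by Proposition~\ref{ctd}, the decision procedure being the quantifier-elimination algorithm itself.

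The main obstacle is the sign bookkeeping in the third paragraph: one must be sure that every case split and every rewriting is provable from $T$ and not merely true in $\mathbb{R}$ --- in particular that $\texttt{M}_5^\circ$ and $\texttt{M}_5^\bullet$ really suffice to push inequalities through multiplication by parameters of either sign, that $\texttt{M}_8$ pins down the even roots tightly enough that relativising to $P$ loses no solutions, and that the positive cone of an arbitrary model of $T$ genuinely satisfies all the hypotheses of Proposition~\ref{thm-or+}. Everything else is a routine adaptation of the proof of Theorem~\ref{thm-oa}.
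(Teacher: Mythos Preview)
Your proposal is correct and follows essentially the same route as the paper: split on the sign of the quantified variable (and of the free parameters/coefficients) to reduce to the positive cone, verify from the listed axioms that this cone is a non-trivial ordered divisible abelian group---using the decomposition $n=2^k(2m+1)$ together with $\texttt{M}_7^\circ$ for odd roots and $\texttt{M}_9$ for square roots---and then invoke Proposition~\ref{thm-or+}. The only cosmetic difference is that the paper case-splits on the signs of \emph{all} free variables at once (so that every surviving term is positive or ${\bf 0}$ or a negation, and the constants ${\bf 0},{\bf -1}$ are then eliminated by trivial rewritings), whereas you first isolate the sign of $x$ and only afterwards branch on the sign patterns of the coefficient monomials; the bookkeeping is the same.
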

\begin{proof}
We have $(x<{\bf 0}) \leftrightarrow ({\bf 0}<-x)$ by $\texttt{M}_{5}^\bullet$, $\texttt{M}_{2}^\circ$, $\texttt{M}_{6}^\circ$ and $\texttt{M}_{8}$, where $-x=({\bf -1})\cdot x$.
Whence, for any   formula $\eta$ we have
$$\exists x \eta(x)\equiv \exists x\!>\!{\bf 0}\eta(x)\vee \eta({\bf 0}) \vee \exists y\!>\!{\bf 0}\eta(-y).$$
Also, if $z$ is another variable in $\eta$, then $\eta(x,z)$ is equivalent with
$$[{\bf 0}<z \wedge \eta(x,z)]\vee \eta(x,{\bf 0}) \vee [{\bf 0}<-z\wedge\eta(x,z)].$$
For the last disjunct, if we let $z'=-z$, then ${\bf 0}<-z\wedge\eta(x,z)$ will be ${\bf 0}<z'\wedge\eta(x,-z')$.
Thus, by introducing the constants ${\bf 0}$ and ${\bf -1}$ (and renaming the variables if necessary) we can assume that all the variables of a quantifier-free formula  are positive.
 Now, the process of eliminating the quantifier of the formula $\exists x\eta(x)$, where $\eta$ is the conjunction of some atomic formulas (cf. Remark~\ref{mainlem}), goes as follows: \\
 We first eliminate the constants ${\bf 0}$ and ${\bf -1}$ and then reduce the desired conclusion to Proposition~\ref{thm-or+}.
 For the first part, we simplify terms so that each term is either positive (all the variables are positive) or equals to ${\bf 0}$ or is the negation of a positive term (is $-t$ for some positive term $t$). Then by replacing ${\bf 0}={\bf 0}$ with $\top$ and ${\bf 0}<{\bf 0}$ with $\bot$, we can assume that ${\bf 0}$ appears at most once in any atomic formula; also ${\bf -1}$ appears at most once since $-t=-s$ is equivalent with $t=s$ and $-t<-s$ with $s<t$.
 Now, we can eliminate the constant ${\bf -1}$ by replacing the atomic formulas $-t=s$, $t=-s$ and $t<-s$ by $\bot$ and $-t<s$ by $\top$ for positive or zero terms $t,s$ (note that   ${\bf -0}={\bf 0}$ by $\texttt{M}_{2}^\circ$).  Also the constant ${\bf 0}$ can be eliminated by replacing ${\bf 0}<t$ with $\top$ and  $t<{\bf 0}$ and $t={\bf 0}$ (also ${\bf 0}=t$) with $\bot$ for positive terms $t$.
 Thus, we get a formula whose all variables are positive, and so we are in the realm of $\mathbb{R}^+$. Finally, for the second part we have the equivalence of thus resulted formula with a quantifier-free formula by Proposition~\ref{thm-or+} provided that the relativized form of the axioms $\texttt{O}_{1}$, $\texttt{O}_{2}$, $\texttt{O}_{3}$, $\texttt{M}_{1}$, $\texttt{M}_{2}$, $\texttt{M}_{3}$, $\texttt{M}_{4}$, $\texttt{M}_{5}$, $\texttt{M}_{6}$ and $\texttt{M}_{7}$ to $\mathbb{R}^+$ can be proved from the axioms $\texttt{O}_{1}$, $\texttt{O}_{2}$, $\texttt{O}_{3}$, $\texttt{M}_{1}$, $\texttt{M}_{2}^\circ$, $\texttt{M}_{3}^\circ$, $\texttt{M}_{4}$, $\texttt{M}_{5}^\circ$, $\texttt{M}_{5}^\bullet$, $\texttt{M}_{6}^\circ$, $\texttt{M}_{7}^\circ$, $\texttt{M}_{8}$,  and $\texttt{M}_{9}$. We need to consider   $\texttt{M}_{6}$ and $\texttt{M}_{7}$ only, when relativized to $\mathbb{R}^+$, i.e., $\exists y({\bf 0}<y \wedge y\neq {\bf 1})$ and $\forall x \exists y [{\bf 0}<x \rightarrow {\bf 0}<y \wedge x=y^n]$. The relativization of $\texttt{M}_{6}$ immediately follows from  $\texttt{M}_{6}^\circ$. For the relativization of $\texttt{M}_{7}$ take any $a>{\bf 0}$, and any $n\in\mathbb{N}$. Write $n=2^k(2m+1)$; by $\texttt{M}_{7}^\circ$ there exists some $c$ such that $c^{2m+1}=a$, and by $\texttt{M}_{5}^\circ$ and $\texttt{M}_{5}^\bullet$ we should have $c>{\bf 0}$. Now, by using $\texttt{M}_{9}$  for $k$ times there must exist some $b$ such that $b^{2^k}=c$ and we can assume that $b>{\bf 0}$ (since otherwise we can take $-b$ instead of $b$). Now, we have $b^{2^k(2m+1)}=c^{2m+1}=a$ and so $a=b^n$.
 \qed\end{proof}

\subsection{Non-finite Axiomatizability of $\langle\mathbb{R};<,\times\rangle$}

\begin{pro}\rm
The structure $\langle\mathbb{R}^+;<,\times\rangle$ is not finitely axiomatizable.
\end{pro}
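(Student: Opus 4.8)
The plan is to adapt the argument of Proposition~\ref{rem-infq} to the multiplicative setting. By Proposition~\ref{thm-or+}, the theory $T$ of $\langle\mathbb{R}^+;<,\times\rangle$ is the set of logical consequences of the axioms $\texttt{O}_1,\texttt{O}_2,\texttt{O}_3,\texttt{M}_1,\dots,\texttt{M}_6$ together with the infinite schema $\texttt{M}_7$, whose instance for a given $n\geqslant 2$ is the sentence $\forall x\exists y(x=y^n)$. If $T$ were finitely axiomatizable it would be the set of consequences of a single sentence $\sigma$, and since $\sigma\in T$ the compactness theorem would force $\sigma$ to follow already from $\texttt{O}_1,\texttt{O}_2,\texttt{O}_3,\texttt{M}_1,\dots,\texttt{M}_6$ together with the instances of $\texttt{M}_7$ for finitely many exponents, say for all $n$ in a finite set $F$. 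So it is enough to exhibit, for this $F$, a model of those axioms in which the instance of $\texttt{M}_7$ for some prime $p$ fails: since $\langle\mathbb{R}^+;<,\times\rangle\models\forall x\exists y(x=y^p)$ (every positive real has a $p$-th root), that instance lies in $T$, and a model of $\sigma$ violating it would not be a model of $T$, contradicting $T=\mathrm{Cn}(\sigma)$.

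To build such a model, fix a prime $p$ larger than every element of $F$, let $P$ be the set of primes smaller than $p$, and let $H$ be the additive subgroup of $\langle\mathbb{Q};<,+\rangle$ consisting of all fractions $m/d$ with $m\in\mathbb{Z}$ and $d$ a positive integer all of whose prime divisors lie in $P$. Then $H$ is a non-trivial ordered abelian group which is $n$-divisible for every $n$ whose prime factors lie in $P$ --- in particular for every $n\in F$ --- but is not $p$-divisible, since $1\in H$ while $1/p\notin H$. Interpreting the multiplicative language on $H$ by reading $\times$ as $+$, $\square^{-1}$ as additive negation and ${\bf 1}$ as $0$ (equivalently, passing to the order-isomorphic multiplicative subgroup $\exp(H)$ of $\langle\mathbb{R}^+;<,\times\rangle$), the term $y^n$ becomes the $n$-fold sum, so the $n$-th instance of $\texttt{M}_7$ becomes precisely the statement that $H$ is $n$-divisible. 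Hence this structure satisfies $\texttt{O}_1,\texttt{O}_2,\texttt{O}_3,\texttt{M}_1,\dots,\texttt{M}_6$ and the instances of $\texttt{M}_7$ for every $n\in F$, but not the instance for $p$, which is all that was needed.

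I expect no serious obstacle here; the only point deserving care --- and the genuine difference from the additive case of Proposition~\ref{rem-infq} --- is that one cannot simply use a multiplicative analogue of the set $\mathbb{Q}/N!$, because $\langle\mathbb{Q}^+;\times\rangle$ is a free abelian group and so is $n$-divisible for no $n\geqslant 2$ whatsoever; the required partial divisibility must instead be introduced by working inside an abstract ordered abelian group (or a subgroup of $\langle\mathbb{R}^+;<,\times\rangle$ obtained via the exponential map). Alternatively, the whole construction can be bypassed: the map $\log$ identifies $\langle\mathbb{R}^+;<,\times\rangle$ with $\langle\mathbb{R};<,+\rangle$ up to renaming the binary operation, and finite axiomatizability of a complete theory is clearly preserved under such a renaming, so the result follows immediately from Proposition~\ref{rem-infq}.
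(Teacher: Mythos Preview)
Your proof is correct and is essentially the same construction as the paper's: the paper exhibits the multiplicative subgroup $\{2^{m\cdot(N!)^{-k}}\mid m\in\mathbb{Z},\,k\in\mathbb{N}\}$ of $\mathbb{R}^+$, which is nothing other than your $\exp(H)$ (up to a harmless scaling by $\log 2$), since your group $H$ coincides with the additive group $\mathbb{Q}/N!$ of Proposition~\ref{rem-infq} once one takes $N$ so that the primes below your $p$ are exactly the primes $\leqslant N$. You spell out the compactness reduction and the additive description of the group more explicitly than the paper does, and your closing remark---that the whole statement follows at once from Proposition~\ref{rem-infq} via the order-isomorphism $\log\colon\langle\mathbb{R}^+;<,\times\rangle\to\langle\mathbb{R};<,+\rangle$ already used in Proposition~\ref{thm-or+}---is a clean shortcut the paper does not state outright.
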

\begin{proof}
For the infinite axiomatizability it suffices to note that for a sufficiently large $N$, the set $\{2^{{m}\cdot(N!)^{-k}}\mid m\in\mathbb{Z},k\in\mathbb{N}\}$ of positive real numbers is a multiplicative subgroup and so satisfies all the axioms ($\texttt{O}_1$, $\texttt{O}_2$, $\texttt{O}_3$, $\texttt{M}_1$,
 $\texttt{M}_2$, $\texttt{M}_3$, $\texttt{M}_4$, $\texttt{M}_5$,
 $\texttt{M}_6$) and  finitely many instances of the axiom $\texttt{M}_7$ (for $n\leqslant N$) but not all the instances of $\texttt{M}_7$ (for example when $n=p$ is a prime larger than $N!$).
\qed\end{proof}
\begin{theorem}\rm
The structure $\langle\mathbb{R};<,\times\rangle$ is not finitely axiomatizable.
\end{theorem}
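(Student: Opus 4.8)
\rm
The plan is to prove the formally stronger statement that \emph{no finite subfamily of the infinite axiom set $\Sigma$ of Proposition~\ref{thm-or} axiomatizes} the theory $\mathrm{Th}\langle\mathbb{R};<,\times,\square^{-1},{\bf -1},{\bf 0},{\bf 1}\rangle$; by compactness this yields the theorem. Indeed, were that theory axiomatized by a single sentence $\sigma$, then $\Sigma\vdash\sigma$ (Proposition~\ref{thm-or} says $\Sigma$ axiomatizes it), so by compactness $\Sigma_0\vdash\sigma$ for some finite $\Sigma_0\subseteq\Sigma$; since moreover every member of $\Sigma$ follows from $\sigma$, the finite set $\Sigma_0$ would already axiomatize the whole theory. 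So it is enough to attach to each finite $\Sigma_0\subseteq\Sigma$ a structure that models $\Sigma_0$ but not $\Sigma$.

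For the witnesses I would reuse, almost verbatim, the groups from the Proposition immediately preceding. For $N\geqslant 2$ put $G_N=\{\pm\,2^{m/(N!)^k}\mid m\in\mathbb{Z},\,k\in\mathbb{N}\}$, a multiplicative subgroup of $\mathbb{R}\setminus\{0\}$, and let $\mathcal{G}_N=\langle G_N\cup\{0\};<,\times,\square^{-1},{\bf -1},{\bf 0},{\bf 1}\rangle$ carry the order and product inherited from $\mathbb{R}$, with ${\bf 0}^{-1}:={\bf 0}$. One checks that $\mathcal{G}_N$ satisfies \emph{every} axiom of $\Sigma$ except possibly some instances of the odd-root scheme $\texttt{M}_7^\circ$: the order axioms $\texttt{O}_1,\texttt{O}_2,\texttt{O}_3$, the group-with-zero axioms $\texttt{M}_1,\texttt{M}_2^\circ,\texttt{M}_3^\circ,\texttt{M}_4$, the monotonicity axioms $\texttt{M}_5^\circ,\texttt{M}_5^\bullet$, and $\texttt{M}_6^\circ$ (take $y=2$) are inherited from $\mathbb{R}$ once one notes that $G_N\cup\{0\}$ contains ${\bf -1},{\bf 0},{\bf 1}$ and is closed under product and under $\square^{-1}$; and $\texttt{M}_8$ and $\texttt{M}_9$ hold because a routine sign analysis reduces each of them to the corresponding statement about the positive part $\{2^{m/(N!)^k}\}$, where it is immediate.

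The step carrying the real weight — and it is precisely the computation already performed in the preceding Proposition, transplanted to the positive elements — is to determine which instances $\texttt{M}_7^\circ(n)\colon\ \forall x\,\exists y\,(x=y^{2n+1})$ survive in $\mathcal{G}_N$. If $2n+1\leqslant N$ then $2n+1\mid N!$, and then every element has a $(2n{+}1)$-th root inside $G_N\cup\{0\}$ (namely $0$ if $x=0$, and otherwise the real $(2n{+}1)$-th root of $x$, which lands back in $G_N$ exactly because $2n+1\mid N!$), so $\texttt{M}_7^\circ(n)$ holds. On the other hand, for every prime $p>N!$ the instance $\texttt{M}_7^\circ\big((p-1)/2\big)$ fails: the element $2\in G_N$ has no $p$-th root in $G_N\cup\{0\}$, because the only real solution of $y^p=2$ is $2^{1/p}$, and $2^{1/p}\notin G_N$ since $1/p$ is not of the form $m/(N!)^k$ (as $p\nmid(N!)^k$). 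Hence, given a finite $\Sigma_0\subseteq\Sigma$, I would choose $N\geqslant 2$ so large that every instance of $\texttt{M}_7^\circ$ appearing in $\Sigma_0$ is of the form $\texttt{M}_7^\circ(n)$ with $2n+1\leqslant N$, and then a prime $p>N!$; the structure $\mathcal{G}_N$ then models $\Sigma_0$ but not the axiom $\texttt{M}_7^\circ\big((p-1)/2\big)\in\Sigma$, so $\Sigma_0$ does not axiomatize the theory. This is the main obstacle of the argument; the rest is bookkeeping.

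Finally I would descend to the reduct. Since ${\bf 0},{\bf 1},{\bf -1}$ and $\square^{-1}$ are first-order definable in $\langle\mathbb{R};<,\times\rangle$ — by the formulas used for the same symbols in the proof of Proposition~\ref{subsec-z}, together with $y=x^{-1}\leftrightarrow\big[(x\neq{\bf 0}\wedge x\cdot y={\bf 1})\vee(x={\bf 0}\wedge y={\bf 0})\big]$ — a hypothetical finite axiomatization of $\langle\mathbb{R};<,\times\rangle$ would become, after adjoining these definitional axioms, a finite axiomatization of $\langle\mathbb{R};<,\times,\square^{-1},{\bf -1},{\bf 0},{\bf 1}\rangle$, contradicting the above. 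Therefore $\langle\mathbb{R};<,\times\rangle$ is not finitely axiomatizable.
\qed
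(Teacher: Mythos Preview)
Your proof is correct and follows essentially the same route as the paper: both exhibit the structure $\{0\}\cup\{\pm 2^{m/(N!)^k}\mid m\in\mathbb{Z},k\in\mathbb{N}\}$, observe that it satisfies all axioms of Proposition~\ref{thm-or} together with the instances of $\texttt{M}_7^\circ$ for $2n+1\leqslant N$, but not the instance where $2n+1$ is a prime larger than $N!$. You add the explicit compactness reduction and the passage from the expanded language back to $\{<,\times\}$, both of which the paper leaves tacit.
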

\begin{proof}
The set $\{0\}\cup\{-2^{{m}\cdot(N!)^{-k}},2^{{m}\cdot(N!)^{-k}}\mid m\in\mathbb{Z},k\in\mathbb{N}\}$ of real numbers, for some $N>2$, satisfies all the axioms of Theorem~\ref{thm-or} except $\texttt{M}_{7}^\circ$; however it satisfies a finite number of its instances (when $2n+1\leqslant N$) but not all the instances of $\texttt{M}_{7}^\circ$ (e.g. when $2n+1$ is a prime greater than $N!$).
\qed\end{proof}


\section{Rational numbers with order and multiplication}
The technique of the proof of Theorem~\ref{thm-or} enables us to consider first the multiplicative and order structure of the positive rational numbers, that is  $\langle\mathbb{Q}^+;<,\times\rangle$.

\subsection{Quantifier Elimination of $\langle\mathbb{Q};<,\times\rangle$}

\begin{pro}\rm
The theory of the structure $\langle\mathbb{Q}^+;<,\times\rangle$ does not admit quantifier elimination.
\end{pro}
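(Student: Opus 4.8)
The plan is to exhibit a single formula in the language $\{<,\times\}$, with free variables over $\mathbb{Q}^+$, whose defined set cannot be defined by any quantifier-free formula in that language. The natural candidate is the squaring predicate: the formula $\varphi(x)\colon \exists y\,(x = y\cdot y)$, which defines exactly the set of squares in $\mathbb{Q}^+$. First I would describe the quantifier-free $\{<,\times\}$-formulas in one free variable $x$ explicitly. Every term in $x$ over the language $\{<,\times\}$ (without constants) is, up to the group identities provable in $\langle\mathbb{Q}^+;<,\times\rangle$, of the form $x^n$ for $n\in\mathbb{N}$; atomic formulas in the single variable $x$ therefore have the form $x^n = x^m$ or $x^n < x^m$. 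But $x^n = x^m$ is equivalent to $\top$ if $n=m$ and to $\bot$ otherwise (no positive rational other than $1$ satisfies $x^{n-m}=1$), and $x^n < x^m$ (say $n<m$) is equivalent to $x^{m-n} < 1$, i.e.\ to $x<1$ (since for positive $x$, $x^k<1 \iff x<1$). Hence, up to provable equivalence, the only atomic formulas in the single variable $x$ are $\top$, $\bot$ and $x<1$; thus every quantifier-free formula in $x$ is equivalent to one of $\bot$, $\top$, $x<1$, $x=1$, $x>1$, $x\leqslant 1$, $x\geqslant 1$, $x\neq 1$.

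Next I would observe that the set of squares of $\mathbb{Q}^+$ is none of these eight sets: it contains $1$ and $4$ but not $2$, so it is not downward-closed, not upward-closed, not a single point, not a complement of a single point, and obviously not $\emptyset$ or $\mathbb{Q}^+$. Therefore $\varphi(x)$ is not equivalent in $\langle\mathbb{Q}^+;<,\times\rangle$ to any quantifier-free $\{<,\times\}$-formula, and by the Main Lemma of Quantifier Elimination (Lemma~\ref{mainlem}) — or simply by the definition of admitting quantifier elimination — the theory of $\langle\mathbb{Q}^+;<,\times\rangle$ does not admit quantifier elimination.

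The only delicate point, and the place where a careful argument is needed, is the claim that every $\{<,\times\}$-term in one variable reduces (modulo the theory) to a power $x^n$, and that the atomic formulas collapse to the short list above; this uses commutativity, associativity, and the order axioms, together with the fact that $\mathbb{Q}^+$ is a torsion-free ordered abelian group under multiplication. Strictly, the language here has no symbol for $\square^{-1}$ or ${\bf 1}$, so a term is literally a product of copies of $x$, i.e.\ $x^n$ with $n\geqslant 1$; the reduction is then immediate, and the verification that $x^n<x^m \iff x<1$ (for $n<m$) and $x^n=x^m \iff \top$ or $\bot$ is elementary. I expect the whole proof to be short; the main obstacle is purely expository — making the enumeration of quantifier-free formulas airtight — rather than mathematical.
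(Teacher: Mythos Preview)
Your approach is essentially the paper's: exhibit the $n$-th power predicate $\exists y\,(x=y^n)$ (you take $n=2$) and argue that it is not equivalent to any quantifier-free formula by classifying the one-variable atomic formulas. There is, however, a slip in your classification. For $n\neq m$ the atomic formula $x^n=x^m$ is \emph{not} equivalent to $\bot$ but to the condition $x=1$ (your own parenthetical remark concedes that $x=1$ satisfies it), and for $n<m$ the formula $x^n<x^m$ is equivalent to $x>1$, not to $x<1$. So the atomic formulas up to equivalence are $\top,\bot,\;x=1,\;x<1,\;x>1$, and the Boolean combinations of these are exactly the eight formulas you go on to list; that list would \emph{not} follow from the three atomics $\top,\bot,x<1$ you claimed (their Boolean algebra has only four elements). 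Once this is corrected the argument is fine, and your final check that the set of squares is none of the eight sets is correct.

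The paper's own proof follows the same outline and in fact contains the same oversight, asserting that $y^n<y^m$ and $y^n=y^m$ ``do not depend on $y$ and are equivalent with $\top$ or $\bot$''; your version is closer to a complete argument because you at least arrive at the correct eight-element list before comparing it with the set of squares.
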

\begin{proof}
We show that the formula $\exists x(y=x^n)$ (for $n>1$) is not equivalent with any quantifier-free formula. All the atomic formulas of the free variable $y$,  are $y^n<y^m$ or $y^n=y^m$ which do not depend on $y$ and are equivalent with $\top$ or $\bot$. So the formula $\exists x(y=x^n)$ (which depends on $y$ and $n$ and can be $\top$ or $\bot$) is not equivalent with any of them.
\qed\end{proof}
\begin{definition}[$\Re$]\label{def-re}\rm
Let  $\Re_n(y)$\label{rn} be the formula $\exists x(y=x^n)$, stating that ``$y$ is the $n$th power of a number'' (for $n>1$).
\qedef\end{definition}
\begin{remark}\rm
For any $r\in\mathbb{Q}$ and any natural $n>1$ the formula $\Re_n(r)$ holds if and only if every exponent of the unique factorization (of the numerators and denominators of the reduced form) of $r$ is divisible by $n$. Thus $\Re_n(r)$ is an algorithmically decidable relation of $r$ (and $n$).
\qecon\end{remark}
\begin{definition}[${\sf TQ}$]\label{def-tq}\rm
Let ${\sf TQ}$\label{tq} be the theory axiomatized by the axioms
\begin{equation*}
\begin{tabular}{l}
($\texttt{O}_1$) \; $\forall x,y(x<y\rightarrow y\not<x)$  \\
($\texttt{O}_2$) \; $\forall x,y,z (x<y<z\rightarrow x<z)$ \\
($\texttt{O}_3$) \; $\forall x,y (x<y \vee x=y \vee y<x)$ \\
($\texttt{M}_1$) \; $\forall x,y,z\,(x\cdot(y\cdot z)=(x\cdot y)\cdot z)$ \\
($\texttt{M}_2$) \; $\forall x (x\cdot\mathbf{\mathbf 1}=x)$ \\
($\texttt{M}_3$) \; $\forall x (x\cdot  x^{-1}=\mathbf{\mathbf 1})$ \\
($\texttt{M}_4$) \; $\forall x,y (x\cdot y=y\cdot x)$ \\
($\texttt{M}_5$) \; $\forall x,y,z(x<y\rightarrow x\cdot z<y\cdot z)$ \\
($\texttt{M}_6$) \; $\exists y (y\neq {\mathbf 1})$ \\
($\texttt{M}_{10}$) \;  $\forall x,z\exists y (x<z\rightarrow x<y^n<z)\qquad\qquad n\in\mathbb{N}$, and  \\
 ($\texttt{M}_{11}$) \;  $\forall \{x_j\}_{j<q} \exists y \forall z  \bigwedge\hspace{-1.5ex}\bigwedge_{m_j\nmid n (j<q)} (y^n\cdot x_j \neq z^{m_j})$ \quad  for each $n\geqslant 1$ (and $m_j>1$)
 \end{tabular}
\end{equation*}
\qedef\end{definition}
Some explanations on the new axioms  $\texttt{M}_{10}$ and $\texttt{M}_{11}$ are in order:

\noindent
The axiom $\texttt{M}_{10}$, interpreted in $\mathbb{Q}^+$,  states that   $\mathbb{Q}^+$ is dense not only in itself but also in the radicals of its elements (or more generally in $\mathbb{R}^+$: for any $x,z\in\mathbb{Q}^+$ there exists some $y\in\mathbb{Q}^+$ that satisfies $\sqrt[n]{x}<y<\sqrt[n]{z}$).

\noindent
The axiom $\texttt{M}_{11}$, interpreted in $\mathbb{Q}^+$ again, is actually equivalent with the fact that for any sequences $x_1,\cdots,x_q\in\mathbb{Q}^+$ and  $m_1,\cdots,m_q\in\mathbb{N}^+$ none of which divides $n$ (in symbols $m_j\nmid n$), there exists some $y\in\mathbb{Q}^+$ such that $\bigwedge\hspace{-1.5ex}\bigwedge_{j}\neg\Re_{m_j}(y^n\cdot x_j)$. This axiom is not true in $\mathbb{R}^+$ (while $\texttt{M}_{10}$ is true in it) and to see that why $\texttt{M}_{11}$ is true in $\mathbb{Q}^+$ it suffices to note that for given $x_1,\cdots,x_q$ one can take $y$ to be a prime number which does not appear in the unique factorization (of the numerators and denominators of the reduced forms) of any of $x_j$'s. In this case $y^n\cdot x_j$ can be an $m_j$'s power (of a rational number) only when $m_j$ divides $n$. The condition $m_j\nmid n$ is necessary, since otherwise (if $m_j\mid n$ and) if $x_j$ happens to satisfy $\Re_{m_j}(x_j)$, then no   $y$ can satisfy the relation  $\neg\Re_{m_j}(y^n\cdot x_j)$.

$\bullet$
We now show that ${\sf TQ}$ completely axiomatizes the theory of  the structure $\langle\mathbb{Q}^+;<,\times,\square^{-1},{\bf 1},\{\Re_n\}_{n>1}\rangle$ and moreover this structure  admits quantifier elimination, thus the theory of the structure $\langle\mathbb{Q}^+;<,\times\rangle$ is decidable.
For that, we will need the following lemmas.
\begin{lemma}\label{lem-1}\rm
For any $x\in\mathbb{Q}^+$ and any natural   $n_1,n_2>1$,
$$\Re_{n_1}(x)\wedge\Re_{n_2}(x)\iff \Re_n(x),$$
 where $n$ is the least common multiplier of $n_1$ and $n_2$.
\end{lemma}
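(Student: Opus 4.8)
The plan is to prove the two implications of the biconditional separately; only the forward direction requires any real work.

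\emph{The ``$\Leftarrow$'' direction.} Suppose $\Re_n(x)$ holds, say $x=y^n$ for some $y\in\mathbb{Q}^+$. Since $n=\mathrm{lcm}(n_1,n_2)$ is a common multiple of $n_1$ and $n_2$, I would write $n=n_1k_1=n_2k_2$ with $k_1,k_2\in\mathbb{N}$; then $x=(y^{k_1})^{n_1}$ and $x=(y^{k_2})^{n_2}$, so $\Re_{n_1}(x)$ and $\Re_{n_2}(x)$ both hold, with no further comment needed.

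\emph{The ``$\Rightarrow$'' direction.} Assume $x=a^{n_1}=b^{n_2}$ with $a,b\in\mathbb{Q}^+$, and set $d=\gcd(n_1,n_2)$, so that $n=n_1n_2/d$. Using B\'ezout's identity (Subsection~\ref{bz}) I would choose integers $p,q$ with $pn_1+qn_2=d$ and put $y=a^{q}b^{p}$; this is a legitimate element of $\mathbb{Q}^+$, since $\mathbb{Q}^+$ is closed under $\times$ and under $\square^{-1}$, so negative exponents are harmless. The claim is that $y^n=x$: indeed $y^n=a^{qn}b^{pn}$, and since $qn=n_1\cdot(qn_2/d)$ and $pn=n_2\cdot(pn_1/d)$ with $qn_2/d$ and $pn_1/d$ integers (because $d\mid n_1$ and $d\mid n_2$), one rewrites $a^{qn}=(a^{n_1})^{qn_2/d}=x^{qn_2/d}$ and $b^{pn}=(b^{n_2})^{pn_1/d}=x^{pn_1/d}$; multiplying gives $y^n=x^{(pn_1+qn_2)/d}=x^{d/d}=x$. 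Hence $\Re_n(x)$.

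The only place care is needed is the exponent bookkeeping in that last step — one must note that $n_1/d$, $n_2/d$ and $n_1n_2/d$ are all genuine integers — so I do not anticipate a substantive obstacle. An alternative route, matching the Remark just before the lemma, would go through unique factorization: writing $x=\prod_i p_i^{e_i}$ with the $p_i$ distinct primes and $e_i\in\mathbb{Z}$, one has $\Re_m(x)\iff(\forall i)\ m\mid e_i$, after which the lemma is exactly the elementary equivalence ``$n_1\mid e$ and $n_2\mid e$'' $\iff$ ``$\mathrm{lcm}(n_1,n_2)\mid e$''. I would present the B\'ezout argument as primary, since it is entirely self-contained and stays within $\langle\mathbb{Q}^+;<,\times\rangle$, and mention the factorization argument only as a remark.
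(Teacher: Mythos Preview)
Your proof is correct and essentially matches the paper's own argument: both directions are handled the same way, and for the forward direction the paper also invokes B\'ezout's identity to construct the $n$th-root witness, writing $c_1(n/n_1)+c_2(n/n_2)=1$ and taking $y^{c_1}z^{c_2}$ (which is exactly your $a^{q}b^{p}$ after unwinding $n/n_1=n_2/d$, $n/n_2=n_1/d$). The only difference is cosmetic bookkeeping in how the B\'ezout relation is phrased; your alternative factorization argument is not used in the paper's proof but is noted in the remark preceding the lemma.
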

\begin{proof}
Since $n$ divides $n_1$ and $n_2$, the $\Leftarrow$ part is straightforward; for the  $\Rightarrow$ direction suppose that $x=y^{n_1}=z^{n_2}$. By B\'{e}zout's Identity there are some $c_1,c_2\in\mathbb{Z}$ such that $c_1n/n_1+c_2n/n_2=1$; therefore,
$$x=x^{c_1n/n_1}\cdot x^{c_2n/n_2}=y^{c_1n}\cdot z^{c_2n}=(y^{c_1}z^{c_2})^n,$$
and this completes the proof.
\qed\end{proof}
\begin{lemma}
\label{lem-1qe}\rm
For natural numbers $\{n_i\}_{i<p}$ with $n_i>1$ and positive rational numbers $\{t_i\}_{i<p}$ and $x$,
$$\bigwedge\hspace{-2.25ex}\bigwedge_{i<p}\Re_{n_i}(x\cdot t_i) \iff \Re_n(x\cdot\beta)\wedge
\bigwedge\hspace{-2.15ex}\bigwedge_{i\neq j}\Re_{d_{i,j}}(t_i\cdot t_j^{-1}),$$
 where $n$ is the least common multiplier of $n_i$'s, $d_{i,j}$ is the greatest common divisor of $n_i$ and $n_j$ \textup{(}for each $i\neq j$\textup{)} and $\beta=\prod_{i<p}t_i^{c_i(n/n_i)}$ in which $c_i$'s satisfy the (B\'{e}zout's) identity $\sum_{i<p}c_i(n/n_i)=1$.
\end{lemma}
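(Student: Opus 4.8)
My plan is to strip the lemma down to a purely arithmetical fact about congruences — essentially a sharpened form of the Generalized Chinese Remainder Theorem of Proposition~\ref{crt} — by passing to prime exponents. Recall from the remark following Definition~\ref{def-re} that for $y\in\mathbb{Q}^+$ with unique factorization $y=\prod_{q}q^{\,v_q(y)}$ (product over primes $q$, with $v_q(y)\in\mathbb{Z}$ almost all zero) one has $\Re_m(y)\iff m\mid v_q(y)$ for every prime $q$. Each $v_q$ is a homomorphism from $\langle\mathbb{Q}^+;\times\rangle$ to $\langle\mathbb{Z};+\rangle$, so, fixing a prime $q$ and writing $X=v_q(x)$, $T_i=v_q(t_i)$, the left side of the claimed equivalence becomes ``$n_i\mid X+T_i$ for all $i<p$'', the conjunct $\Re_n(x\cdot\beta)$ becomes ``$n\mid X+\sum_{i<p}c_i(n/n_i)T_i$'' (since $v_q(\beta)=\sum_{i<p}c_i(n/n_i)T_i$), and each $\Re_{d_{i,j}}(t_i\cdot t_j^{-1})$ becomes ``$d_{i,j}\mid T_i-T_j$''. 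Hence it suffices to prove, for all integers $X,T_0,\dots,T_{p-1}$,
\[
\bigwedge_{i<p}\big(n_i\mid X+T_i\big)\;\Longleftrightarrow\;\big(n\mid X+\textstyle\sum_{i<p}c_i(n/n_i)\,T_i\big)\;\wedge\;\bigwedge_{i\neq j}\big(d_{i,j}\mid T_i-T_j\big),
\]
and then conjoin over all primes $q$.

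To prove this congruence identity I first record that $\gcd_{i<p}(n/n_i)=1$ — for any prime $q$ the exact power $q^a\|n$ already divides some $n_k$, so $q\nmid n/n_k$ — which is what makes the Bézout coefficients $c_i$ with $\sum_{i<p}c_i(n/n_i)=1$ available. In the forward direction the compatibility conjuncts are immediate ($d_{i,j}\mid n_i$ and $d_{i,j}\mid n_j$ give $-T_i\equiv X\equiv -T_j\pmod{d_{i,j}}$), and the single congruence is verified one prime at a time: for $q^a\|n$ choose $k$ with $q^a\|n_k$; then $X\equiv -T_k\pmod{q^a}$, and from $d_{i,k}\mid T_i-T_k$ together with $v_q(n/n_i)+v_q(d_{i,k})=(a-v_q(n_i))+v_q(n_i)=a$ one gets $c_i(n/n_i)T_i\equiv c_i(n/n_i)T_k\pmod{q^a}$ for every $i$, so summing and using $\sum_{i<p}c_i(n/n_i)=1$ yields $X+\sum_i c_i(n/n_i)T_i\equiv -T_k+T_k=0\pmod{q^a}$. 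The converse is symmetric: fixing $i$ and a prime $q$ with $q^a\|n_i$, the hypothesis gives $q^a\mid X+\sum_j c_j(n/n_j)T_j$, and the bound $v_q\!\big(c_j(n/n_j)(T_j-T_i)\big)\geqslant(v_q(n)-v_q(n_j))+\min(v_q(n_i),v_q(n_j))\geqslant a$ (using $d_{i,j}\mid T_j-T_i$) collapses $\sum_j c_j(n/n_j)T_j$ to $T_i$ modulo $q^a$, so $X+T_i\equiv0\pmod{q^a}$; ranging over $q$ gives $n_i\mid X+T_i$.

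This lemma extends Lemma~\ref{lem-1} (the case $t_0=\dots=t_{p-1}=1$, where the compatibility conjuncts degenerate and $\beta=1$) by the same Bézout device, but now it genuinely needs the Chinese-Remainder compatibility clauses because the $t_i$ shift the residues that $x$ must hit. If one prefers to avoid the prime-by-prime bookkeeping, the congruence identity can instead be derived by invoking Proposition~\ref{crt} for solvability of the shifted system and then adding only the ``uniqueness modulo the common multiple, with explicit representative'' part; but the direct computation is self-contained and displays $\beta$ transparently. \textbf{The one delicate point} is exactly the interplay between the weight $n/n_i$, which has $q$-valuation $v_q(n)-v_q(n_i)$, and the compatibility modulus $d_{i,k}$, which has $q$-valuation $\min(v_q(n_i),v_q(n_k))$: one must check — splitting on whether $v_q(n_j)\leqslant v_q(n_i)$ or not — that these valuations always add up to at least the valuation of whichever of $n$ or $n_i$ is the modulus in play, and this single inequality is what powers both implications. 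Everything else (additivity of $v_q$, the exponent characterization of $\Re_m$, and Bézout's identity) is already available in the paper.
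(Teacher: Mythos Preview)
Your argument is correct, but it follows a genuinely different route from the paper's. You translate the problem via the valuation homomorphisms $v_q:\mathbb{Q}^+\to\mathbb{Z}$ into a family of integer congruences and then verify the equivalence prime by prime, invoking the valuation inequality $v_q(n/n_j)+v_q(d_{i,j})\geqslant v_q(n_i)$ as the engine on both sides. The paper, by contrast, never leaves the multiplicative group: its key step is the purely algebraic identity
\[
t_k\cdot\beta^{-1}=\prod_{i\neq k}(t_k\cdot t_i^{-1})^{c_i(n/n_i)}
=\Big(\prod_{i\neq k}w_{k,i}^{\,c_i(n/m_{k,i})}\Big)^{n_k},
\]
where $t_k\cdot t_i^{-1}=w_{k,i}^{d_{k,i}}$ and $m_{k,i}=\mathrm{lcm}(n_k,n_i)$, using $d_{k,i}\cdot(n/n_i)=n_k\cdot(n/m_{k,i})$. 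This single identity yields $\Re_{n_k}(t_k\cdot\beta^{-1})$ whenever the compatibility clauses hold, and both implications then follow in two lines by combining with $\Re_{n_k}(x\cdot t_k)$ or $\Re_{n_k}(x\cdot\beta)$ and invoking Lemma~\ref{lem-1}. What the paper's approach buys is that the computation is valid in any abelian group, hence goes through verbatim inside the theory ${\sf TQ}$ (as the paper remarks immediately after the lemma); your valuation argument, while perfectly sound over $\mathbb{Q}^+$ as the lemma is stated, relies on unique factorization and so does not by itself establish that provability claim. What your approach buys is transparency about \emph{why} $\beta$ works: it is visibly the CRT-style reconstruction of a simultaneous residue, and your ``one delicate point'' is exactly the paper's exponent identity $d_{k,i}/n_i=n_k/m_{k,i}$ seen additively.
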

\begin{proof}
For $t_i$'s, $n_i$'s,  $c_i$'s,  $d_{i,j}$'s and $n$  as given above, we show that the relation $\Re_{n_k}(t_k\cdot \beta^{-1})$ holds for each  fixed $k<p$ when $\bigwedge\hspace{-1.5ex}\bigwedge_{i\neq j}\Re_{d_{i,j}}(t_i\cdot t_j^{-1})$ holds.   Let $m_{k,i}$ be  the least common multiplier of $n_k$ and $n_i$ (which is then a divisor of $n$). Let us note that $d_{k,i}/n_i=n_k/m_{k,i}$.
Since $\Re_{d_{k,i}}(t_k\cdot t_i^{-1})$, there should exists some $w_{k,i}$'s (for $i\neq k$) such that $t_k\cdot t_i^{-1}=w_{k,i}^{d_{k,i}}$.
Now, the relation  $\Re_{n_k}(t_k\cdot \beta^{-1})$ follows from the following identities:
\begin{center}
\begin{tabular}{lcl}
$t_k\cdot \beta^{-1}$&$=$&$t_k^{\sum_{i}c_i(n/n_i)}\cdot \prod_{i}t_i^{-c_i(n/n_i)}$\\
&$=$&$\prod_{i\neq k}(t_k\cdot t_i^{-1})^{c_i(n/n_i)}$ \\  &$=$&$\prod_{i\neq k} (w_{k,i}^{d_{k,i}})^{c_i(n/n_i)}$\\
&$=$& $\prod_{i\neq k} w_{k,i}^{c_i\cdot n_k(n/m_{k,i})}$ \\ &$=$& $(\prod_{i\neq k} w_{k,i}^{c_i(n/m_{k,i})})^{n_k}$.
\end{tabular}
\end{center}
\begin{itemize}\itemindent=2.5ex
\item[($\Rightarrow$):]   The relations $\Re_{n_i}(x\cdot t_i)$ and   $\Re_{n_j}(x\cdot t_j)$ immediately imply that  $\Re_{d_{i,j}}(x\cdot t_i)$ and $\Re_{d_{i,j}}(x\cdot t_j)$ and so $\Re_{d_{i,j}}(t_i\cdot t_j^{-1})$. For showing $\Re_n(x\cdot\beta)$ it suffices, by Lemma~\ref{lem-1}, to show that $\Re_{n_i}(x\cdot \beta)$ holds for each $i<p$.   This 
    follows from  $\Re_{n_i}(t_i\cdot \beta^{-1})$, which was proved above, and the assumption $\Re_{n_i}(x\cdot t_i)$.
\item[($\Leftarrow$):]
From  the first part of the proof we have   $\Re_{n_k}(t_k\cdot \beta^{-1})$ for each $k<p$; now by $\Re_n(x\cdot \beta)$ we have $\Re_{n_k}(x\cdot \beta)$ and so $\Re_{n_k}(x\cdot t_k)$  for each $k<p$.
\qed
\end{itemize}
\end{proof}
$\bullet$
Let us note that Lemmas~\ref{lem-1} and~\ref{lem-1qe} are provable in   ${\sf TQ}$. The idea of the proof of Lemma~\ref{lem-1qe} is taken from \cite{ore}.
\begin{lemma}\label{lem-aqe}\rm
The following sentences are provable in ${\sf TQ}$, for any $n>1$\textup{:}

$\forall u\exists y [\Re_n(y\cdot u)]$,

$\forall x,u\exists y [x<y\wedge\Re_n(y\cdot u)]$,

$\forall z,u\exists y [y<z\wedge\Re_n(y\cdot u)]$ and

$\forall x,z,u\exists y [x<z\rightarrow x<y<z\wedge\Re_n(y\cdot u)]$.
\end{lemma}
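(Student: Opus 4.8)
The plan is to derive all four sentences inside ${\sf TQ}$ by reducing each of them to the density-of-radicals axiom $\texttt{M}_{10}$, using only the abelian group axioms $\texttt{M}_1$--$\texttt{M}_4$, the monotonicity axiom $\texttt{M}_5$, the non-triviality axiom $\texttt{M}_6$ and the linear order axioms $\texttt{O}_1,\texttt{O}_2,\texttt{O}_3$. The single new idea beyond $\texttt{M}_{10}$ is a scaling trick: if $w^n$ lies in the interval obtained by multiplying the prescribed bounds by $u$, then $y:=w^{n}\cdot u^{-1}$ lies in the original interval (by $\texttt{M}_5$ together with the cancellation $u\cdot u^{-1}=\mathbf 1$), while $y\cdot u=w^{n}$ at once witnesses $\Re_n(y\cdot u)$.

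First I would dispose of $\forall u\,\exists y\,[\Re_n(y\cdot u)]$ purely algebraically: with $y=u^{n-1}$ one has $y\cdot u=u^{n}$ by $\texttt{M}_1$ and $\texttt{M}_4$, so $\Re_n(y\cdot u)$ holds, witnessed by $u$ itself. (This also follows a fortiori from the second sentence, so it may be skipped.)

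The heart of the argument is the fourth sentence. Assuming $x<z$, one gets $x\cdot u<z\cdot u$ from $\texttt{M}_5$ and $\texttt{M}_4$; applying $\texttt{M}_{10}$ to the (now strictly ordered) pair $(x\cdot u,\,z\cdot u)$ yields some $w$ with $x\cdot u<w^{n}<z\cdot u$. Putting $y:=w^{n}\cdot u^{-1}$ and multiplying this chain through by $u^{-1}$ via $\texttt{M}_5$, then simplifying with $\texttt{M}_1,\texttt{M}_2,\texttt{M}_3$, gives $x<y<z$, and $y\cdot u=w^{n}$ gives $\Re_n(y\cdot u)$; if on the other hand $x\not<z$ the implication is vacuous and any $y$ will do.

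The second and third sentences then follow by the same mechanism once a second bound is manufactured. From $\texttt{M}_6$ and $\texttt{O}_3$ one extracts an element $v$ with $\mathbf 1<v$ (replacing $v$ by $v^{-1}$ if needed, again via $\texttt{M}_5$). For the second sentence, apply the fourth-sentence argument to the strictly ordered pair $(x\cdot u,\,x\cdot u\cdot v)$ to obtain $y>x$ with $\Re_n(y\cdot u)$; for the third, apply it to $(z\cdot u\cdot v^{-1},\,z\cdot u)$ to obtain $y<z$ with $\Re_n(y\cdot u)$. I expect no genuine obstacle here: the only delicate points are the routine bookkeeping of cancellations and commutations in the group, and the correct reading of $\texttt{M}_{10}$, whose implication sits inside the existential quantifier and is therefore informative precisely when its two arguments are already strictly ordered — which is exactly the regime in which it is invoked above.
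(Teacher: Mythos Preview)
Your proposal is correct and matches the paper's approach: the paper proves only the fourth sentence, and does so by exactly your scaling trick---apply $\texttt{M}_{10}$ to the pair $(x\cdot u,\,z\cdot u)$ to obtain $v$ with $x\cdot u<v^{n}<z\cdot u$, then set $y=v^{n}\cdot u^{-1}$. Your additional derivations of the first three sentences (via $y=u^{n-1}$ and via manufacturing a second bound from $\texttt{M}_6$) are sound elaborations that the paper simply omits.
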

\begin{proof}
We present a proof for the last formula only. By $\texttt{M}_{10}$ (of Definition~\ref{def-tq}) there exists some $v$ such that $x\cdot u<v^n<z\cdot u$. Then for $y=v^n\cdot u^{-1}$ we will have $x<y<z$ and $\Re_n(y\cdot u)$.
\qed\end{proof}

\begin{lemma}\label{lem-bqe}\rm
The following sentences are provable in ${\sf TQ}$, for any $\{m_j>1\}_{j<q}$\textup{:}

$\forall \{x_j\}_{j<q}\exists y [\bigwedge\hspace{-1.5ex}\bigwedge_{j<q}\neg\Re_{m_j}(y\cdot x_j)]$,

$\forall \{x_j\}_{j<q},u\exists y [u<y\wedge\bigwedge\hspace{-1.5ex}\bigwedge_{j<q}\neg\Re_{m_j}(y\cdot x_j)]$,

$\forall \{x_j\}_{j<q},v\exists y [y<v\wedge \bigwedge\hspace{-1.5ex}\bigwedge_{j<q}\neg\Re_{m_j}(y\cdot x_j)]$ and

$\forall \{x_j\}_{j<q},u,v\exists y [u<v\rightarrow u<y<v\wedge \bigwedge\hspace{-1.5ex}\bigwedge_{j<q}\neg\Re_{m_j}(y\cdot x_j)]$.
\end{lemma}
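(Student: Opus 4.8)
The plan is to reduce all four sentences to the first one, which is essentially axiom $\texttt{M}_{11}$ (of Definition~\ref{def-tq}) instantiated at $n=1$: since each $m_j>1$, none of the $m_j$ divides $1$, so $\texttt{M}_{11}$ for $n=1$ asserts $\forall\{x_j\}_{j<q}\,\exists y\,\bigwedge\hspace{-1.5ex}\bigwedge_{j<q}\neg\Re_{m_j}(y\cdot x_j)$, which is precisely the first formula (recall that $\neg\Re_m(w)$ abbreviates $\forall z\,(w\neq z^m)$). So I would begin by recording this, and then handle the three constrained versions by taking a witness $y_1$ for the first sentence and \emph{shifting} it into the prescribed ray or interval by multiplying it with a perfect power that does not destroy any of the non-$m_j$-power conditions.

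The bookkeeping fact, provable already from the group axioms $\texttt{M}_1$--$\texttt{M}_4$, is: if $\neg\Re_m(w)$ and $\Re_m(c)$, then $\neg\Re_m(w\cdot c)$; indeed $w\cdot c=z^m$ together with $c=c_0^{\,m}$ would force $w=(z\cdot c_0^{-1})^m$. Now set $M=\mathrm{lcm}(m_j:j<q)$; every perfect $M$-th power is simultaneously a perfect $m_j$-th power for all $j$, so for $c=d^{M}$ the hypotheses $\neg\Re_{m_j}(y_1\cdot x_j)$ give $\neg\Re_{m_j}\big((y_1\cdot c)\cdot x_j\big)$ for every $j$ at once. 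It therefore remains only, given the interval (or ray), to produce a perfect $M$-th power $c$ with $y_1\cdot c$ inside it. For the interval sentence I would assume $u<v$, multiply through by $y_1^{-1}$ (using $\texttt{M}_5$) to get $u\cdot y_1^{-1}<v\cdot y_1^{-1}$, and apply the last formula of Lemma~\ref{lem-aqe} with exponent $M$ and its free multiplicative parameter set to ${\mathbf 1}$: this yields a $c$ with $u\cdot y_1^{-1}<c<v\cdot y_1^{-1}$ and $\Re_M(c)$. Multiplying back by $y_1$ gives $u<y_1\cdot c<v$, so $y:=y_1\cdot c$ works.

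The two ray sentences then follow from the interval one together with the order-theoretic unboundedness of the group: by $\texttt{M}_5$ and $\texttt{M}_6$ there is a $g>{\mathbf 1}$, and then $a<a\cdot g$ and $a\cdot g^{-1}<a$ for every $a$, so every element has elements strictly above and strictly below it. For ``$u<y$'' I would apply the interval sentence to $u$ and $v:=u\cdot g$; for ``$y<v$'' to $u:=v\cdot g^{-1}$ and $v$; the unconstrained first sentence is of course a weakening of either. (Alternatively, using $\texttt{M}_{10}$ with exponent $M$ on an interval $(a,a\cdot g)$ shows that perfect $M$-th powers are cofinal and coinitial in the order, and one may shift $y_1$ directly.) I expect no serious obstacle here; the only point needing care is the combinatorial choice $M=\mathrm{lcm}(m_j)$ and the check that multiplying by a single perfect $M$-th power preserves \emph{all} of the conditions $\neg\Re_{m_j}$ simultaneously --- everything else is routine rearrangement inside ${\sf TQ}$, using only the group axioms, $\texttt{M}_5$, $\texttt{M}_6$, $\texttt{M}_{10}$, $\texttt{M}_{11}$ and Lemma~\ref{lem-aqe}.
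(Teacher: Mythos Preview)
Your proposal is correct and follows essentially the same route as the paper: obtain a witness $\gamma$ from $\texttt{M}_{11}$ at $n=1$, then shift it into the target interval by multiplying by a perfect $M$-th power found via $\texttt{M}_{10}$ (the paper applies $\texttt{M}_{10}$ directly to $u\cdot\gamma^{-1}<v\cdot\gamma^{-1}$ rather than going through Lemma~\ref{lem-aqe}, and takes $M=\prod_j m_j$ rather than the lcm, but these are cosmetic). Your extra step of deducing the two ray sentences from the interval one via unboundedness is a tidy addition the paper leaves implicit.
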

\begin{proof}
The first sentence is an immediate consequence of $\texttt{M}_{11}$ (of Definition~\ref{def-tq}) for $n=1$. We show the last sentence. There exists  $\gamma$, by $\texttt{M}_{11}$, such that the relation $\bigwedge\hspace{-1.5ex}\bigwedge_{j}\neg\Re_{m_j}(\gamma\cdot x_j)$ holds. Let $M=\prod_jm_j$; by $\texttt{M}_{10}$ there exists some $\delta$ such that the inequalities $u\cdot\gamma^{-1}<\delta^M<v\cdot\gamma^{-1}$ holds. Now for $y=\gamma\cdot\delta^M$ we have $u<y<v$ and also $\bigwedge\hspace{-1.5ex}\bigwedge_{j}\neg\Re_{m_j}(y\cdot x_j)$, since if (otherwise) we had $\Re_{m_j}(y\cdot x_j)$, then $\Re_{m_j}(\gamma\cdot\delta^M\cdot x_j)$ and so $\Re_{m_j}(\gamma\cdot x_j)$ would hold; a contradiction.
\qed\end{proof}
\begin{lemma}\label{lem-cqe}\rm
In the theory ${\sf TQ}$ the following formulas

$\exists x [\Re_n(x\cdot t)\wedge \bigwedge\hspace{-1.5ex}\bigwedge_{j<q}\neg\Re_{m_j}(x\cdot s_j)]$,

$\exists x [u<x\wedge \Re_n(x\cdot t)\wedge \bigwedge\hspace{-1.5ex}\bigwedge_{j<q}\neg\Re_{m_j}(x\cdot s_j)]$ and

$\exists x [x<v\wedge \Re_n(x\cdot t)\wedge \bigwedge\hspace{-1.5ex}\bigwedge_{j<q}\neg\Re_{m_j}(x\cdot s_j)]$

\noindent
are equivalent with

\qquad $\bigwedge\hspace{-1.5ex}\bigwedge_{m_j\mid n (j<q)}\neg\Re_{m_j}(t^{-1}\cdot s_j)$\textup{;}

\noindent
and the formula

$\exists x [u<x<v\wedge \Re_n(x\cdot t)\wedge \bigwedge\hspace{-1.5ex}\bigwedge_{j<q}\neg\Re_{m_j}(x\cdot s_j)]$

\noindent
is equivalent with

\qquad $\bigwedge\hspace{-1.5ex}\bigwedge_{m_j\mid n (j<q)}\neg\Re_{m_j}(t^{-1}\cdot s_j)\wedge u<v$.
\end{lemma}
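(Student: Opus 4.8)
The strategy is to reduce each of the four formulas to a quantifier-free condition by using the auxiliary lemmas already established in ${\sf TQ}$, namely Lemmas~\ref{lem-1qe}, \ref{lem-aqe} and \ref{lem-bqe}, together with the observation that in $\mathbb{Q}^+$ (and provably in ${\sf TQ}$) the property $\Re_m$ is "local" in the sense captured by Lemma~\ref{lem-1}. First I would treat the bounded case $\exists x[u<x<v\wedge\Re_n(x\cdot t)\wedge\bigwedge_{j<q}\neg\Re_{m_j}(x\cdot s_j)]$, since the other three are obtained by dropping one or both bounds (formally: replacing $u<x$ by $\top$ and/or $x<v$ by $\top$, and noting via $\texttt{M}_6$ together with $\texttt{M}_5$ that one can always push past any fixed element, so the "$u<v$" conjunct simply disappears). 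The $(\Rightarrow)$ direction is the easy half: if some $x$ witnesses the left-hand side, then from $\Re_n(x\cdot t)$ and $m_j\mid n$ we get $\Re_{m_j}(x\cdot t)$; combining this with $\neg\Re_{m_j}(x\cdot s_j)$ and the group structure ($\texttt{M}_1$–$\texttt{M}_4$) yields $\neg\Re_{m_j}(t^{-1}\cdot s_j)$ for each $j$ with $m_j\mid n$, and $u<v$ follows from $\texttt{O}_2$.

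The $(\Leftarrow)$ direction is where the real work lies, and I expect it to be the main obstacle. Assume $u<v$ and $\neg\Re_{m_j}(t^{-1}\cdot s_j)$ for every $j$ with $m_j\mid n$; I must produce a witness $x$. The idea is to split the index set $\{0,\dots,q-1\}$ into $J_0=\{j: m_j\mid n\}$ and $J_1=\{j: m_j\nmid n\}$ and handle the two parts by different mechanisms. For $j\in J_1$, the axiom $\texttt{M}_{11}$ (in the form packaged by Lemma~\ref{lem-bqe}) guarantees that a generic choice of $x$ of the shape $x=y^n\cdot(\text{something})$ automatically satisfies $\neg\Re_{m_j}(x\cdot s_j)$ — because $m_j\nmid n$ — regardless of $s_j$. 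For $j\in J_0$, however, one cannot kill $\Re_{m_j}(x\cdot s_j)$ by genericity, so here one must use the hypothesis $\neg\Re_{m_j}(t^{-1}\cdot s_j)$: writing $x=w^n\cdot t^{-1}$ for a parameter $w$ to be chosen, we get $x\cdot t=w^n$, so $\Re_n(x\cdot t)$ holds for free, while $x\cdot s_j=w^n\cdot t^{-1}\cdot s_j$, and since $m_j\mid n$ the $w^n$ factor is already an $m_j$-th power, hence $\Re_{m_j}(x\cdot s_j)\Leftrightarrow\Re_{m_j}(t^{-1}\cdot s_j)$, which is false by assumption — independently of $w$. Thus with the ansatz $x=w^n\cdot t^{-1}$ the conjuncts indexed by $J_0$ are satisfied automatically, and only the conjuncts indexed by $J_1$ together with the order constraint $u<x<v$ remain to be arranged by choosing $w$.

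Finally I would combine these two reductions. Substituting $x=w^n\cdot t^{-1}$, the residual requirement is $\exists w[u\cdot t<w^n<v\cdot t\ \wedge\ \bigwedge_{j\in J_1}\neg\Re_{m_j}(w^n\cdot(t^{-1}s_j))]$, which is precisely an instance of the density-with-avoidance statement proved in Lemma~\ref{lem-bqe} (its last sentence, applied after first using $\texttt{M}_{10}$/Lemma~\ref{lem-aqe} to find an $n$-th-power window strictly between $u\cdot t$ and $v\cdot t$, which is nonempty because $u<v$ and $\texttt{M}_5$). Here it is important that for $j\in J_1$ the exponent $m_j$ does not divide $n$, so $\texttt{M}_{11}$ is applicable exactly as stated. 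This yields the desired $w$, hence the desired $x$, completing the $(\Leftarrow)$ direction. The three unbounded variants follow by the same ansatz with the corresponding bound(s) omitted: for the one-sided cases one uses the one-sided sentences of Lemmas~\ref{lem-aqe} and~\ref{lem-bqe}, and for the unbounded case $\exists x[\Re_n(x\cdot t)\wedge\bigwedge_{j<q}\neg\Re_{m_j}(x\cdot s_j)]$ one uses the first sentence of Lemma~\ref{lem-bqe} after the substitution, obtaining equivalence with just $\bigwedge_{m_j\mid n}\neg\Re_{m_j}(t^{-1}\cdot s_j)$ with no order conjunct.
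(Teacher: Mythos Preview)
Your plan is essentially the paper's proof: both split the indices into $J_0=\{j:m_j\mid n\}$ and $J_1=\{j:m_j\nmid n\}$, use the ansatz $x=w^n\cdot t^{-1}$ so that $\Re_n(x\cdot t)$ is automatic and the $J_0$-conjuncts reduce to the hypothesis $\neg\Re_{m_j}(t^{-1}s_j)$, and then handle $J_1$ together with the order constraint by a combination of $\texttt{M}_{11}$ and $\texttt{M}_{10}$.

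One imprecision to flag: the residual requirement $\exists w\,[u t<w^n<vt\wedge\bigwedge_{j\in J_1}\neg\Re_{m_j}(w^n\cdot t^{-1}s_j)]$ is \emph{not} literally an instance of Lemma~\ref{lem-bqe}, because that lemma only produces a $y$ in the interval satisfying the avoidance conditions, with no guarantee that $y$ is an $n$-th power. Finding an ``$n$-th-power window'' first via $\texttt{M}_{10}$ and then applying Lemma~\ref{lem-bqe} inside it does not fix this: the witness $y$ handed back by Lemma~\ref{lem-bqe} still need not be an $n$-th power. What one actually needs here---and what you correctly hint at in your final remark---is $\texttt{M}_{11}$ with the \emph{given} exponent $n$ (which is exactly where the hypothesis $m_j\nmid n$ for $j\in J_1$ is used): this yields $\gamma$ with $\bigwedge_{j\in J_1}\neg\Re_{m_j}(\gamma^{n}\cdot t^{-1}s_j)$, and then $\texttt{M}_{10}$ with exponent $Mn$ (where $M=\prod_j m_j$) produces $\delta$ with $ut\gamma^{-n}<\delta^{Mn}<vt\gamma^{-n}$. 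Taking $w=\gamma\delta^{M}$ gives $w^n=\gamma^n\delta^{Mn}$ in the required interval, and since $m_j\mid M$ the factor $\delta^{Mn}$ is an $m_j$-th power, so $\neg\Re_{m_j}(w^n\cdot t^{-1}s_j)$ follows from $\neg\Re_{m_j}(\gamma^n\cdot t^{-1}s_j)$. This is precisely the paper's construction $x=\delta^{Mn}\gamma^n t^{-1}$; once you replace the appeal to Lemma~\ref{lem-bqe} by this direct use of $\texttt{M}_{11}$ and $\texttt{M}_{10}$, your argument is complete and coincides with the paper's.
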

\begin{proof}
If $m_j\mid n$ then $\Re_n(x\cdot t)$ implies $\Re_{m_j}(x\cdot t)$. Now, if $\Re_{m_j}(t^{-1}\cdot s_j)$ were true, then $\Re_{m_j}(x\cdot s_j)$ would be true too;  contradicting $\bigwedge\hspace{-1.5ex}\bigwedge_{j<q}\neg\Re_{m_j}(x\cdot s_j)$. Suppose now that the relation $\bigwedge\hspace{-1.5ex}\bigwedge_{m_j\mid n}\neg\Re_{m_j}(t^{-1}\cdot s_j)$ holds. By $\texttt{M}_{11}$ there exists some $\gamma$ such that $\bigwedge\hspace{-1.5ex}\bigwedge_{m_j\nmid n}\neg\Re_{m_j}(\gamma\cdot t^{-1}\cdot s_j)$ holds. By $\texttt{M}_{10}$ there exists some $\delta$ such that the inequalities $u\cdot t\cdot\gamma^{-n}<\delta^{M\cdot n}<v\cdot t\cdot\gamma^{-n}$ (if $u<v$) hold,  where $M$ is the product $\prod_{j<q}m_j$. For $x=\delta^{M\cdot n}\cdot \gamma^{n}\cdot t^{-1}$ we have $u<x<v$ and $\Re_n(x\cdot t)$. We show $\neg\Re_{m_j}(x\cdot s_j)$ for each $j<q$ by distinguishing two cases: if $m_j\mid n$ then $\neg\Re_{m_j}(t^{-1}\cdot s_j)$ implies the relation $\neg\Re_{m_j}(\delta^{M\cdot n}\cdot \gamma^{n}\cdot t^{-1}\cdot s_j)$; if $m_j\nmid n$ then by  $\neg\Re_{m_j}(\gamma\cdot t^{-1}\cdot s_j)$ we have the relation  $\neg\Re_{m_j}(\delta^{M\cdot n}\cdot \gamma^{n}\cdot t^{-1}\cdot s_j)$.
\qed\end{proof}

$\bullet$
Finally we can prove the main result which appears for the first time in this thesis.
\begin{theorem}\rm\label{omq+}
The infinite theory ${\sf TQ}$ completely axiomatizes the theory of the structure   $\langle\mathbb{Q}^+;<,\times\rangle$, and moreover the structure    $\langle\mathbb{Q}^+;<,\times,\square^{-1},{\bf 1},\{\Re_n\}_{n>1}\rangle$   admits quantifier elimination.

\end{theorem}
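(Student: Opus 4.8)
The plan is to verify the Main Lemma of Quantifier Elimination (Lemma~\ref{mainlem}) for the structure by running an explicit elimination procedure whose last steps are exactly Lemmas~\ref{lem-1qe} and~\ref{lem-cqe}. First I would observe that, by associativity and commutativity, every term of $\{<,\times,\square^{-1},{\bf 1}\}$ equals a monomial $\prod_k z_k^{a_k}$ with $a_k\in\mathbb{Z}$, so every atomic formula has one of the shapes $s<t$, $s=t$, $\Re_n(t)$ for monomials $s,t$; and since $<$ is a linear order, Remark~\ref{hn} lets us discard all negated atoms except those of the form $\neg\Re_m(\cdot)$. Hence it suffices to eliminate $x$ from
$$\exists x\Big(\bigwedge_i\alpha_i\ \wedge\ \bigwedge_j\neg\Re_{m_j}(x^{a_j}\cdot s_j)\Big),$$
where each $\alpha_i$ is either $x$-free, or an inequality $x^{a}\cdot t<x^{b}\cdot s$ or $x^{a}\cdot t=x^{b}\cdot s$, or a positive atom $\Re_n(x^{a}\cdot t)$ with $t,s$ monomials not containing $x$; the $x$-free atoms are simply carried into the output.

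For the normalization I would, first, divide each inequality by the positive monomial $x^{b}\cdot t$, turning it into $x^{c}<w$ or $x^{c}=w$ with $w$ an $x$-free monomial and $c=a-b$; if $c=0$ this is $x$-free, and if $c<0$ one rewrites it (multiplying by $x^{-c}$, which is order preserving) as $w^{-1}<x^{|c|}$ or $w^{-1}=x^{|c|}$, so one may take $c>0$. Secondly, since $\Re_n(u)\leftrightarrow\Re_n(u^{-1})$ is provable, every surviving $\Re$-atom, positive or negative, may be taken to be $\Re_n(x^{a}\cdot t)$ with $a>0$. Thirdly, let $\beta$ be the least common multiple of all exponents of $x$ now occurring; raising an inequality $x^{c}<w$ to the (order preserving) power $\beta/c$ makes it $x^{\beta}<w^{\beta/c}$, and, using that the $(\beta/a)$-th power map is strictly monotone hence injective (by $\texttt{M}_5$ with $\texttt{O}_1$--$\texttt{O}_3$), one gets $\Re_n(x^{a}\cdot t)\leftrightarrow\Re_{n\beta/a}(x^{\beta}\cdot t^{\beta/a})$. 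If $\beta>1$ then $x$ now occurs only as $x^{\beta}$, and introducing a fresh $y$ one has the provable equivalence
$$\exists x\,\psi(x)\ \Longleftrightarrow\ \exists y\big(\Re_\beta(y)\wedge\psi'(y)\big),$$
with $\psi'$ obtained from $\psi$ by replacing $x^{\beta}$ with $y$; so (this also being the situation when $\beta=1$) we are reduced to eliminating an existential variable $y$ occurring everywhere with exponent $1$.

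For the endgame I would first collect the inequalities on $y$: if one of them is $y=w$, substitute $w$ for $y$ and stop; otherwise, comparing the finitely many $x$-free candidate bounds by $<$ and branching over which is largest (respectively smallest) reduces, up to a finite disjunction, to at most one lower bound $u<y$ and at most one upper bound $y<v$. Next, the positive $\Re$-atoms are $\Re_\beta(y)=\Re_\beta(y\cdot{\bf 1})$ (when $\beta>1$) together with the atoms $\Re_{n_i}(y\cdot t_i)$; by the Chinese-Remainder-type Lemma~\ref{lem-1qe} their conjunction is ${\sf TQ}$-equivalent to a single atom $\Re_{n}(y\cdot\beta_0)$ plus an $x$-free conjunction of $\Re$-atoms (which go into the output). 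We are thus left with
$$\exists y\big(u<y<v\ \wedge\ \Re_{n}(y\cdot\beta_0)\ \wedge\ \bigwedge_j\neg\Re_{m_j}(y\cdot s_j)\big),$$
possibly with one or both bounds absent, and Lemma~\ref{lem-cqe} supplies a quantifier-free equivalent; if there is no positive $\Re$-atom one uses Lemma~\ref{lem-bqe} instead, and if there is no negative one Lemma~\ref{lem-aqe}. This proves quantifier elimination, every equivalence invoked being a theorem of ${\sf TQ}$. Completeness then follows at once: the only closed term of the language is provably ${\bf 1}$, so every quantifier-free sentence is ${\sf TQ}$-provably equal to $\top$ or $\bot$; hence ${\sf TQ}$, through the elimination just carried out, decides every sentence, and being consistent (it is satisfied by the positive rationals) it is complete, so it axiomatizes the theory of $\langle\mathbb{Q}^+;<,\times\rangle$.

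The hard part is everything involving the predicates $\Re_n$. Unlike the real case (Proposition~\ref{thm-or+} and Theorem~\ref{thm-or}), one cannot simply substitute $x=y^{\beta}$ to clear exponents, since $x\mapsto x^{\beta}$ is not onto $\mathbb{Q}^+$; this is precisely why the $\Re_n$ must be added to the language (without them quantifier elimination fails). The genuine work is then done by Lemma~\ref{lem-1qe}, which merges several translated $\Re$-atoms into one in Chinese-Remainder fashion, and by Lemma~\ref{lem-cqe}, whose proof rests on the two new axioms of Definition~\ref{def-tq}: the density-in-radicals axiom $\texttt{M}_{10}$ and the avoidance axiom $\texttt{M}_{11}$. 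It is $\texttt{M}_{11}$ --- true in $\mathbb{Q}^+$ but false in $\mathbb{R}^+$ --- that makes the procedure terminate, and verifying these lemmas (especially Lemma~\ref{lem-cqe}) is where the real care is required.
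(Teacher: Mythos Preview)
Your proposal is correct and follows essentially the same route as the paper's proof: normalize all exponents of $x$ to a common value via the equivalences $a^n<b^n\leftrightarrow a<b$ and $\Re_{mn}(a^n)\leftrightarrow\Re_m(a)$, substitute $y=x^\beta$ while recording $\Re_\beta(y)$ (the paper signals this step only by a cross-reference to the analogous congruence trick in Theorem~\ref{thm-oaz}), then reduce to $h=0$, $f,g\leqslant 1$, $p\leqslant 1$ and finish with Lemmas~\ref{lem-aqe}, \ref{lem-bqe}, \ref{lem-cqe} according to which of $p,q$ vanish. Your write-up is in fact more explicit than the paper's about the substitution step and about why completeness follows from quantifier elimination (the only closed term being provably ${\bf 1}$), but the architecture is the same.
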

\begin{proof}
We are to eliminate the quantifier of the formula
\begin{equation}\label{q-1}
\exists x (\bigwedge\hspace{-2.2ex}\bigwedge_{i<p} \Re_{n_i}(x^{a_i}\cdot t_i) \;\wedge\; \bigwedge\hspace{-2.25ex}\bigwedge_{j<q} \neg\Re_{m_j}(x^{b_j}\cdot s_j) \;\wedge\;
  \bigwedge\hspace{-2.35ex}\bigwedge_{k<f} u_k\!<\!x^{c_k} \;\wedge\;  \bigwedge\hspace{-2.25ex}\bigwedge_{\ell<g} x^{d_\ell}\!<\!v_\ell \;\wedge\;
 \bigwedge\hspace{-2.25ex}\bigwedge_{\iota<h} x^{e_\iota}=w_\iota).
\end{equation}
By the equivalences
\begin{itemize}\itemindent=10em
\item[(i)]
$a^n<b^n \leftrightarrow a<b$
\item[(ii)]
$\Re_{m\cdot n}(a^n)\leftrightarrow \Re_m(a)$
\end{itemize}
we can assume that all the $a_i$'s, $b_j$'s, $c_k$'s, $d_\ell$'s and $e_\iota$'s are equal to each other, and moreover, equal to one (cf. the proof of Theorem~\ref{thm-oaz}). We can also assume that $h=0$ and that $f,g\leqslant 1$. By Lemma~\ref{lem-1qe}   we can also assume that $p\leqslant 1$.
\begin{itemize}
\item[--]
If $q=0$, then Lemma~\ref{lem-aqe} implies that the quantifier of the  formula~\eqref{q-1} can be eliminated. So, we assume that $q>0$.
\item[--]
If $p=0$, then the quantifier of \eqref{q-1} can be eliminated by Lemma~\ref{lem-bqe}.
\item[--]
Finally, if $p=1$ (and $q\neq 0=h$ and  $f,g\leqslant 1$), then Lemma~\ref{lem-cqe} implies that the formula~\eqref{q-1} is equivalent with a quantifier-free formula.
\end{itemize}\vspace{-1cm}
\qed\end{proof}
\begin{cor}\label{omq}\rm
The below infinite theory completely axiomatized the theory of the structure $\langle\mathbb{Q};<,\times\rangle$:
\begin{equation*}
\begin{tabular}{l}
($\texttt{O}_1$) \; $\forall x,y(x<y\rightarrow y\not<x)$  \\
($\texttt{O}_2$) \; $\forall x,y,z (x<y<z\rightarrow x<z)$ \\
($\texttt{O}_3$) \; $\forall x,y (x<y \vee x=y \vee y<x)$ \\
($\texttt{M}_1$) \; $\forall x,y,z\,(x\cdot(y\cdot z)=(x\cdot y)\cdot z)$ \\
($\texttt{M}_2^\circ$) \; $\forall x (x\cdot\mathbf{1}=x  \;\,  \wedge \;\, x\cdot{\bf 0}={\bf 0}={\bf 0}^{-1})$ \\
($\texttt{M}_3^\circ$) \; $\forall x (x\neq {\bf 0}\rightarrow x\cdot  x^{-1}=\mathbf{1})$ \\
($\texttt{M}_4$) \; $\forall x,y (x\cdot y=y\cdot x)$ \\
($\texttt{M}_5^\circ$) \; $\forall x,y,z(x<y\wedge {\bf 0}<z \rightarrow x\cdot z<y\cdot z)$ \\
($\texttt{M}_5^\bullet$) \; $\forall x,y,z(x<y\wedge z<{\bf 0} \rightarrow y\cdot z<x\cdot z)$  \\
($\texttt{M}_6^\circ$) \; $\exists y ({\bf -1}<{\bf 0}<{\bf 1}<y)$ \\
($\texttt{M}_{8}$) \;  $\forall
x (x^{2n}={\bf 1}\longleftrightarrow x={\bf 1}\vee x={\bf -1})$ \\
$(\texttt{M}_{10}^\circ)$    $\forall x,z\exists y ({\bf 0}<x<z\rightarrow x<y^n<z)\qquad\qquad n\in\mathbb{N}$\\
 ($\texttt{M}_{11}$) \;  $\forall \{x_j\}_{j<q} \exists y \forall z  \bigwedge\hspace{-1.5ex}\bigwedge_{m_j\nmid n (j<q)} (y^n\cdot x_j \neq z^{m_j})$ \quad  for each $n\geqslant 1$ (and $m_j>1$)
 \end{tabular}
\end{equation*}
and moreover the structure $\langle\mathbb{Q};<,\times,\square^{-1},{\bf -1},{\bf 0},{\bf 1},\{\Re_n\}_{n>1}\rangle$   admits quantifier elimination.
\end{cor}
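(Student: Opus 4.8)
The plan is to reduce the whole structure to its positive part, exactly as Theorem~\ref{thm-or} reduces $\langle\mathbb{R};<,\times\rangle$ to $\langle\mathbb{R}^+;<,\times\rangle$, but with Theorem~\ref{omq+} playing the role that Proposition~\ref{thm-or+} played there. The structure $\langle\mathbb{Q};<,\times\rangle$ splits into $\mathbb{Q}^+$, $\{0\}$ and $\mathbb{Q}^-$, and $\mathbb{Q}^-$ is a mirror copy of $\mathbb{Q}^+$ under $x\mapsto -x=({\bf -1})\cdot x$; so eliminating a quantifier ranging over $\mathbb{Q}$ should collapse to three elimination problems over $\mathbb{Q}^+$, each already solved by Theorem~\ref{omq+}.

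In detail, I would first derive $(x<{\bf 0})\leftrightarrow({\bf 0}<-x)$ from $\texttt{M}_5^\bullet$, $\texttt{M}_2^\circ$, $\texttt{M}_6^\circ$ and $\texttt{M}_8$, just as in the proof of Theorem~\ref{thm-or}, so that negation becomes an order-reversing involution. Then, for any formula $\eta$,
$$\exists x\,\eta(x)\ \equiv\ \exists x\,({\bf 0}<x\wedge\eta(x))\ \vee\ \eta({\bf 0})\ \vee\ \exists y\,({\bf 0}<y\wedge\eta(-y)),$$
and every remaining free variable $z$ is split the same way, its negative copy $-z$ making it positive in the last disjunct. After introducing the constants ${\bf 0}$ and ${\bf -1}$ and renaming variables, I may assume that every variable occurring in the quantifier-free matrix is positive; normalising each term to be positive, equal to ${\bf 0}$, or $-t$ for a positive term $t$, and then simplifying atoms that mention ${\bf 0}$ or ${\bf -1}$ (sending $-t=s$, $t=-s$, $t<-s$ to $\bot$, sending $-t<s$ and ${\bf 0}<t$ to $\top$, and sending $t<{\bf 0}$, $t={\bf 0}$ to $\bot$, for positive $t,s$), I arrive at an existential formula all of whose variables and terms are positive, i.e.\ a formula of $\langle\mathbb{Q}^+;<,\times,\square^{-1},{\bf 1},\{\Re_n\}\rangle$. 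Its quantifier is eliminated by Theorem~\ref{omq+}, and after restoring the original free variables I obtain a quantifier-free formula of $\langle\mathbb{Q};<,\times,\square^{-1},{\bf -1},{\bf 0},{\bf 1},\{\Re_n\}\rangle$. Since the axioms of the Corollary visibly hold in this structure (recall that $\Re_n(r)$ is the algorithmically decidable relation ``every exponent in the reduced factorisation of $r$ is divisible by $n$''), and since quantifier-free $\{\Re_n\}$-sentences are decidable, this gives quantifier elimination together with completeness and decidability.

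To make the argument fully proof-theoretic one must also check that the reduction above takes place inside the deductive closure of the axioms, that is, that the relativisations to the positive cone $\{x:{\bf 0}<x\}$ of the axioms $\texttt{O}_1$--$\texttt{O}_3,\texttt{M}_1$--$\texttt{M}_6,\texttt{M}_{10},\texttt{M}_{11}$ of ${\sf TQ}$ are derivable from $\texttt{O}_1$--$\texttt{O}_3,\texttt{M}_1,\texttt{M}_2^\circ,\texttt{M}_3^\circ,\texttt{M}_4,\texttt{M}_5^\circ,\texttt{M}_5^\bullet,\texttt{M}_6^\circ,\texttt{M}_8,\texttt{M}_{10}^\circ,\texttt{M}_{11}$. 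The order and group axioms, and $\texttt{M}_5$, $\texttt{M}_6$, relativise immediately (using that $x\neq{\bf 0}$ forces $x^2>{\bf 0}$, so that $-{\bf 1}$ is not a square and hence not an even power, together with $\texttt{M}_8$ for $({\bf -1})^m$); closure of the positive cone under $\times$ and $\square^{-1}$ follows from $\texttt{M}_5^\circ$, $\texttt{M}_3^\circ$, $\texttt{M}_6^\circ$; and the relativisation of $\texttt{M}_{10}$ follows from $\texttt{M}_{10}^\circ$, since a candidate root $y$ with ${\bf 0}<x<y^n<z$ cannot have $y\leqslant{\bf 0}$ when $n$ is odd, while for $n$ even one replaces $y$ by $-y$.

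I expect the relativisation of $\texttt{M}_{11}$ to be the main obstacle. The unrelativised $\texttt{M}_{11}$ only yields \emph{some} witness $y$, possibly negative; taking $|y|$ succeeds at once when $n$ is even, but when $n$ is odd and $m_j$ is even one is left with $(-y)^n\cdot x_j=-(y^n x_j)=y^n\cdot(-x_j)$, about which the chosen instance of $\texttt{M}_{11}$ is silent. My proposed fix is to instantiate $\texttt{M}_{11}$ at the \emph{doubled} tuple $\{x_j\}\cup\{-x_j\}$ (with the same exponents $m_j$ and the same $n$): the resulting witness then satisfies both $\neg\Re_{m_j}(y^n x_j)$ and $\neg\Re_{m_j}(y^n\cdot(-x_j))$, so $y':=|y|$ — which is one of $y$, $-y$ — satisfies $\neg\Re_{m_j}(y'^n x_j)$ in every parity case, because $y'^n x_j$ is one of $y^n x_j$, $y^n\cdot(-x_j)$, each of which is positive and hence not an $m_j$-th power in the positive cone exactly when it is not one in the whole model. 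With the relativised ${\sf TQ}$-axioms in hand the reduction goes through inside the theory, and Theorem~\ref{omq+} completes the quantifier-elimination, completeness and decidability claims of the Corollary.
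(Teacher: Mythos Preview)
Your approach is exactly the paper's: split by sign and reduce to Theorem~\ref{omq+}; the paper's own proof is the single sentence ``it suffices to distinguish the signs by noting that for all $x$ one of the three cases $-x>{\bf 0}$ or $x={\bf 0}$ or $x>{\bf 0}$ holds.'' You are in fact considerably more careful than the paper --- in particular you isolate and correctly resolve the relativisation of $\texttt{M}_{11}$ to the positive cone via the doubled tuple $\{x_j\}\cup\{-x_j\}$, a genuine subtlety the paper passes over in silence.

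One small omission in your atom-simplification step: the language now contains the predicates $\Re_n$, so when you substitute ${\bf 0}$ or pass to $-t$ you must also dispose of $\Re_n$-atoms, namely $\Re_n({\bf 0})\equiv\top$, and for positive $t$ one has $\Re_n(-t)\equiv\Re_n(t)$ when $n$ is odd and $\Re_n(-t)\equiv\bot$ when $n$ is even (the latter needs $\texttt{M}_8$ together with the fact, derivable from $\texttt{M}_5^\circ,\texttt{M}_5^\bullet$, that even powers are nonnegative). This is routine and does not affect your argument.
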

\begin{proof}
Quantifier elimination of the theory of 
$\langle\mathbb{Q};<,\times,\square^{-1},{\bf -1},{\bf 0},{\bf 1},\{\Re_n\}_{n>1}\rangle$ follows from Theorem~\ref{omq+}: it suffices to distinguish  the signs by noting that for all $x$ one of the three cases $-x>{\bf 0}$ or $x={\bf 0}$ or $x>{\bf 0}$ holds.
\qed\end{proof}
\begin{pro}\label{qam}\rm\label{subsec-q}
The theory of the structure $\langle\mathbb{Q};+,\times\rangle$ is undecidable.
\end{pro}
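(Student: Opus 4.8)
The plan is to reduce this to the first-order definability of the integers in the field $\mathbb{Q}$. Suppose we have an $\{+,\times\}$-formula $\iota(x)$ with $\mathbb{Q}\models\iota(r)\iff r\in\mathbb{Z}$. Then the relativization argument already used in the proof of Theorem~\ref{rem-noa} applies verbatim: for an $\{+,\times\}$-sentence $\sigma$ let $\sigma^{\mathbb{Z}}$ be obtained by replacing every $\forall x\,\theta$ by $\forall x\,[\iota(x)\rightarrow\theta]$ and every $\exists x\,\theta$ by $\exists x\,[\iota(x)\wedge\theta]$; since $\mathbb{Z}$ is closed under $+$ and $\times$ we get $\langle\mathbb{Z};+,\times\rangle\models\sigma$ iff $\langle\mathbb{Q};+,\times\rangle\models\sigma^{\mathbb{Z}}$, and $\sigma\mapsto\sigma^{\mathbb{Z}}$ is computable. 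A decision procedure for the theory of $\langle\mathbb{Q};+,\times\rangle$ would thus yield one for the theory of $\langle\mathbb{Z};+,\times\rangle$, contradicting Proposition~\ref{zam}. (Equivalently, one may relativize to $\mathbb{N}$, which is definable in $\langle\mathbb{Z};+,\times\rangle$ by Lagrange's four-square theorem as in Proposition~\ref{zam}, and invoke G\"odel's incompleteness theorem directly.)

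So everything reduces to the statement that $\mathbb{Z}$ is first-order definable in $\mathbb{Q}$, which is Julia Robinson's theorem~\cite{robinson}; see also \cite[Theorem 8.30]{smorynski}. I would present (or simply cite) her defining formula, whose shape is a universal quantifier over a $\Sigma$-predicate: for parameters $a,b\in\mathbb{Q}$ one forms a predicate $\phi(a,b,m)$ asserting the rational solvability of a ternary quadratic Diophantine equation whose coefficients are built from $a,b$ and involve $m$, and then sets
$$\iota(n)\;\equiv\;\forall a\,\forall b\,\Big[\phi(a,b,0)\wedge\forall m\big(\phi(a,b,m)\rightarrow\phi(a,b,m+1)\big)\;\rightarrow\;\phi(a,b,n)\Big].$$
The easy direction — every integer satisfies $\iota$ — is an induction carried out inside the hypotheses of $\iota$ (closure under $0$ and successor forces $\phi(a,b,n)$ for all $n\in\mathbb{N}$, and a symmetric remark handles the negatives). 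The substance is the converse: for $n\in\mathbb{Q}\setminus\mathbb{Z}$ one must exhibit parameters $a,b$ for which $\{m:\phi(a,b,m)\}$ contains $0$, is closed under $m\mapsto m+1$, yet omits $n$.

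The main obstacle — the core of Robinson's argument — is exactly this analysis of which rationals a ternary quadratic form represents. Here one invokes the Hasse--Minkowski local--global principle: the form represents a given value over $\mathbb{Q}$ iff it does so over $\mathbb{R}$ and over every $p$-adic field $\mathbb{Q}_p$. The delicate part is the $2$-adic and $p$-adic computation of the relevant Hilbert symbols, which shows that by prescribing the $p$-adic behaviour of $a$ and $b$ one can make $\{m:\phi(a,b,m)\}$ into (essentially) the set of rationals whose denominator is prime to $p$, while keeping it closed under $m\mapsto m+1$; intersecting over all primes $p$ then cuts out precisely $\mathbb{Z}$. Following the style of the rest of this thesis, I would either reproduce these local computations or simply cite \cite{robinson} and \cite[Theorem 8.30]{smorynski} for the definability of $\mathbb{Z}$ in $\mathbb{Q}$ and feed it into the relativization of the first paragraph.
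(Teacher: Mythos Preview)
Your proposal is correct and follows exactly the paper's own approach: cite Robinson's theorem that $\mathbb{Z}$ is first-order definable in $\langle\mathbb{Q};+,\times\rangle$, relativize, and contradict Proposition~\ref{zam}. The only difference is that you flesh out the relativization and sketch Robinson's Hasse--Minkowski argument, whereas the paper simply cites~\cite{robinson} and leaves it at that.
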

\begin{proof}
Since the set of integer numbers is definable in $\langle\mathbb{Q};+,\times\rangle$ \cite{robinson},
 the decidability of the theory of the structure $\langle\mathbb{Q};+,\times\rangle$ implies the decidability of the theory of the structure $\langle\mathbb{Z};+,\times\rangle$ and this contradicts Proposition~\ref{zam}.
\qed\end{proof}

\subsection{Non-finite Axiomatizability of $\langle\mathbb{Q};<,\times\rangle$}
\begin{theorem}\rm
The structure $\langle\mathbb{Q}^+;<,\times\rangle$ is not finitely axiomatizable.
\end{theorem}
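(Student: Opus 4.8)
The plan is to follow the same pattern as in Propositions~\ref{rem-infq} and~\ref{rem-infz}. By Theorem~\ref{omq+} the (complete) theory of $\langle\mathbb{Q}^+;<,\times\rangle$ is precisely the set of logical consequences of ${\sf TQ}$; hence, if that theory were finitely axiomatizable it would already be axiomatized by some finite subset ${\sf TQ}_0\subseteq{\sf TQ}$ (a finite axiomatization is a single sentence with a finite derivation, so it follows from finitely many of the axioms), and then every model of ${\sf TQ}_0$ would be a model of ${\sf TQ}$. So it suffices to produce, for an arbitrary finite ${\sf TQ}_0\subseteq{\sf TQ}$, a structure satisfying ${\sf TQ}_0$ but not ${\sf TQ}$. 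Such a ${\sf TQ}_0$ consists of some of $\texttt{O}_1,\texttt{O}_2,\texttt{O}_3,\texttt{M}_1,\dots,\texttt{M}_6$ together with finitely many instances of the two schemata $\texttt{M}_{10}$ and $\texttt{M}_{11}$; in particular there is a bound $Q$ such that every instance of $\texttt{M}_{11}$ occurring in ${\sf TQ}_0$ has parameter count $q\leqslant Q$.

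As witnesses I would take the multiplicative subgroups $G_k=\langle p_1,\dots,p_k\rangle\leqslant\mathbb{Q}^+$ generated by the first $k$ primes, with the inherited order. Being a nontrivial subgroup of the ordered abelian group $\langle\mathbb{Q}^+;<,\times\rangle$, every $G_k$ automatically satisfies $\texttt{O}_1,\texttt{O}_2,\texttt{O}_3,\texttt{M}_1,\dots,\texttt{M}_6$. For $k\geqslant 2$ it moreover satisfies \emph{every} instance of $\texttt{M}_{10}$: since $\log p_1,\dots,\log p_k$ are $\mathbb{Q}$-linearly independent (by unique factorization), the $n$-th power subgroup $G_k^n=\langle p_1^n,\dots,p_k^n\rangle$ is, under $\log$, generated by two reals with irrational ratio, hence dense in $\mathbb{R}^+$, so it meets every interval $(x,z)$ with $x,z\in G_k$. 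In the other direction $G_k$ \emph{fails} $\texttt{M}_{11}$ outright: in the instance with $n=1$, all $m_j=2$ and $q=2^k$, pick $x_1,\dots,x_{2^k}$ to be a full system of coset representatives of $G_k/G_k^2\cong(\mathbb{Z}/2\mathbb{Z})^k$; then each $y\in G_k$ lies in exactly one coset $x_{j_0}^{-1}G_k^2$, so $y\cdot x_{j_0}\in G_k^2$ is a square in $G_k$, and no $y$ witnesses this instance. Hence $G_k\not\models{\sf TQ}$ for every $k\geqslant 2$.

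What is left is to check that $G_k$ \emph{does} satisfy the finitely many instances of $\texttt{M}_{11}$ appearing in ${\sf TQ}_0$ as soon as $k$ is large enough, and this small counting argument is the step I expect to be the main obstacle. Fix such an instance, with exponents $n,m_1,\dots,m_q$ (so $q\leqslant Q$) and parameters $x_1,\dots,x_q\in G_k$; put $m^\ast=\mathrm{lcm}(m_1,\dots,m_q)$ and work in the finite group $G_k/G_k^{m^\ast}\cong(\mathbb{Z}/m^\ast\mathbb{Z})^k$. For each $j$ with $m_j\nmid n$ the ``bad'' set $B_j=\{\,y\in G_k:y^n x_j\in G_k^{m_j}\,\}$ is a union of cosets of $G_k^{m^\ast}$, and in the quotient it is either empty or a single coset of the kernel of the homomorphism $(\mathbb{Z}/m^\ast\mathbb{Z})^k\to(\mathbb{Z}/m_j\mathbb{Z})^k$, $\bar y\mapsto n\bar y$; since $m_j\nmid n$ forces $m_j/\gcd(n,m_j)\geqslant 2$, that kernel has index $(m_j/\gcd(n,m_j))^k\geqslant 2^k$, so $B_j$ occupies a fraction at most $2^{-k}$ of $G_k/G_k^{m^\ast}$. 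Hence $\bigcup_{j<q}B_j$ occupies a fraction at most $q\cdot 2^{-k}\leqslant Q\cdot 2^{-k}$, which is $<1$ whenever $2^k>Q$, so some $y\in G_k$ avoids every $B_j$ and witnesses the instance. Thus any $k$ with $k\geqslant 2$ and $2^k>Q$ gives a structure $G_k$ that models ${\sf TQ}_0$ but not ${\sf TQ}$, and the theorem follows; the same $G_k$ serve for the bare language $\{<,\times\}$ since ${\bf 1}$, $\square^{-1}$ and each $\Re_n$ are definable over $G_k$.
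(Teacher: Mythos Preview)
Your proof is correct and takes a genuinely different route from the paper's. The paper builds its counter-models \emph{above} $\mathbb{Q}^+$: for a large prime $\mathfrak{p}$ it forms $(\mathbb{Q}/\mathfrak{p})^\ast=\{\prod_i\rho_i^{r_i}:r_i\in\mathbb{Z}[1/\mathfrak{p}]\}\subset\mathbb{R}^+$, a multiplicative group in which every element is a $\mathfrak{p}$-th power, so the single instance of $\texttt{M}_{11}$ with $q=1$, $n=1$, $m_0=\mathfrak{p}$ fails; it then checks by an explicit construction of $y$ that all instances with $m_j<\mathfrak{p}$ hold. Thus the paper separates the schema along the parameter $m_j$, and $\texttt{M}_{10}$ comes for free because the model contains $\mathbb{Q}^+$. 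You instead work \emph{below} $\mathbb{Q}^+$, with the finitely generated subgroup $G_k=\langle p_1,\dots,p_k\rangle$, and you separate $\texttt{M}_{11}$ along the parameter $q$: a pigeonhole count in $G_k/G_k^{m^\ast}$ verifies every instance with $q<2^k$, while the instance with $q=2^k$ (taking all cosets of $G_k^2$ as the $x_j$'s) visibly fails. The price you pay is that $\texttt{M}_{10}$ is no longer automatic; you recover it from the irrationality of $\log p_i/\log p_j$ and the dichotomy ``discrete or dense'' for subgroups of $\mathbb{R}$. Each approach has its virtue: the paper's model makes $\texttt{M}_{10}$ trivial and pins the failure on a single $m_0$, whereas your argument stays inside $\mathbb{Q}^+$, is uniform in $n$ and the $m_j$'s, and replaces the paper's explicit witness by a clean density estimate.
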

\begin{proof}
To see that the structure $\langle\mathbb{Q}^+;<,\times\rangle$ cannot be axiomatized by a finite set of sentences we present an ordered multiplicative structure that satisfies any sufficiently large  finite number of the axioms of ${\sf TQ}$ but does not satisfy all of its axioms. Let $\mathfrak{p}$ be a sufficiently large prime number. The set
$$\mathbb{Q}/\mathfrak{p}=\{m/\mathfrak{p}^k\mid m\in\mathbb{Z},k\in\mathbb{N}\}$$
is closed under addition and the operation $x\mapsto x/\mathfrak{p}$,  and the inclusions  $\mathbb{Z}\subset\mathbb{Q}/\mathfrak{p}\subset\mathbb{Q}$ hold.
Let $\rho_0,\rho_1,\rho_2,\cdots$ denote  the sequence of all prime numbers ($2,3,5,\cdots$). Let $(\mathbb{Q}/\mathfrak{p})^\ast$ be the set $\{\prod_{i<\ell}\rho_i^{r_i}\mid \ell\in\mathbb{N},r_i\in\mathbb{Q}/\mathfrak{p}\}$;  this  is closed under multiplication and the operation $x\mapsto {x}^{1/\mathfrak{p}}$, and we have the inclusions $\mathbb{Q}^+\subset (\mathbb{Q}/\mathfrak{p})^\ast \subset\mathbb{R}^+$. Thus, $(\mathbb{Q}/\mathfrak{p})^\ast$  satisfies the axioms $\texttt{O}_{1}$, $\texttt{O}_{2}$, $\texttt{O}_{3}$, $\texttt{M}_{1}$, $\texttt{M}_{2}$, $\texttt{M}_{3}$, $\texttt{M}_{4}$, $\texttt{M}_{5}$  and $\texttt{M}_{6}$ of Proposition~\ref{thm-or+}, and also the axiom $\texttt{M}_{10}$. However, it does not satisfy the axiom $\texttt{M}_{11}$ for $n=q=x_0=1$ and $m_0=\mathfrak{p}$ because $(\mathbb{Q}/\mathfrak{p})^\ast\models\forall y\Re_{\mathfrak{p}}(y)$. We show that $(\mathbb{Q}/\mathfrak{p})^\ast$ satisfies the instances of the axiom $\texttt{M}_{11}$ when $1<m_j<\mathfrak{p}$ (for each $j<q$  and arbitrary $n,q$). Thus, no finite number of the instances of $\texttt{M}_{11}$ can prove all of its instances (with the rest of the axioms of ${\sf TQ}$). Let $x_j$'s be given from $(\mathbb{Q}/\mathfrak{p})^\ast$; write $x_j=\prod_{i<\ell_j}\rho_i^{r_{i,j}}$ where we can assume that $\ell_j\geqslant q$. Put $r_{j,j}=u_j/\mathfrak{p}^{v_j}$ where $u_j\in\mathbb{Z}$ and $v_j\in\mathbb{N}$ (for each $j<q$).
Define $t_j$ to be $1$ when $m_j\mid u_j$ and be $m_j$ when $m_j\nmid u_j$. Let $$y=\prod_{i<q}\rho_i^{(t_i/\mathfrak{p}^{v_i+1})} (\in(\mathbb{Q}/\mathfrak{p})^\ast).$$
We show
$$\bigwedge\hspace{-2.4ex}\bigwedge_{j<q}\neg\Re_{m_j}(y^n\cdot x_j)$$
under the assumption $\bigwedge\hspace{-1.5ex}\bigwedge_{j<q} m_j\nmid n$. Take a $k<q$, and assume (for the sake of contradiction) that $\Re_{m_k}(y^n\cdot x_k)$. Then $\Re_{m_k}(\rho_k^{nt_k/\mathfrak{p}^{v_k+1}}\cdot
\rho_k^{u_k/\mathfrak{p}^{v_k}})$ holds, and so there should exist some $a,b$ 
such that
$$\rho_k^{(nt_k+\mathfrak{p}u_k)/\mathfrak{p}^{v_k+1}}=
\rho_k^{(m_k\cdot a)/\mathfrak{p}^b}.$$
Therefore,
$$m_k\mid nt_k+\mathfrak{p}u_k.$$
We reach to a contradiction by distinguishing two cases:

\noindent
(i) if $m_k\mid u_k$ then $t_k=1$ and so $m_k\mid n+\mathfrak{p}u_k$ whence $m_k\mid n$,  contradicting $\bigwedge\hspace{-1.5ex}\bigwedge_{j<q} m_j\nmid n$;

\noindent
(ii) if $m_k\nmid u_k$ then $t_k=m_k$ and so $m_k\mid nm_k+\mathfrak{p}u_k$ whence $m_k\mid \mathfrak{p}u_k$ which by $(m_k,\mathfrak{p})=1$ implies that $m_k\mid u_k$, contradicting the assumption (of $m_k\nmid u_k$).
\qed\end{proof} 

\chapter{Conclusions and Open Problems} 
\label{Chapter5} 


\section{Some Conclusions}
In the following table the decidable structures are denoted by $\Delta_1$\label{delta1} and the undecidable ones by $\Delta_1\hspace{-4.45mm}\backslash\hspace{-3.5mm}\not$\label{ndelta1}\quad:

\vspace{1cm}
\begin{center}
\begin{tabular}{|c||c|c|c|c|}
\hline
  & $\mathbb{N}$ & $\mathbb{Z}$ & $\mathbb{Q}$ & $\mathbb{R}$ \\
\hline
\hline
$\{<\}$ & $\Delta_1$ & $\Delta_1$ & $\Delta_1$ & $\Delta_1$ \\
\hline
$\{<,+\}$ & $\Delta_1$ & $\Delta_1$ & $\Delta_1$ & $\Delta_1$ \\
\hline
$\{<,\times\}$ & $\hspace{-2.55mm}\Delta_1\hspace{-4.45mm}\backslash\hspace{-3.5mm}\not$ \;  & $\hspace{-2.55mm}\Delta_1\hspace{-4.45mm}\backslash\hspace{-3.5mm}\not$ \;  &   $\Delta_1$     & $\Delta_1$ \\
\hline
\hline
$\{+,\times\}$ & $\hspace{-2.55mm}\Delta_1\hspace{-4.45mm}\backslash\hspace{-3.5mm}\not$ \;  & $\hspace{-2.55mm}\Delta_1\hspace{-4.45mm}\backslash\hspace{-3.5mm}\not$ \;  &  \ $\hspace{-2.55mm}\Delta_1\hspace{-4.45mm}\backslash\hspace{-3.5mm}\not$ \;   & $\Delta_1$ \\
\hline
\end{tabular}
\end{center}
\vspace{1cm}
\begin{itemize}
\item[$\bullet$]
Decidability of the theory of the structure $\langle\mathbb{Q};<,\times\rangle$ and also the presentation of  an explicit axiomatization for the theory of the structure $\langle\mathbb{R};<,\times\rangle$ are some new results in this thesis.
\item[$\bullet$]
For the  theory of some other decidable structures, the old and new (syntactic) proofs were given along with some explicit axiomatizations.
\item[$\bullet$]  It is interesting to note that
\begin{itemize}
\item  the undecidability of the theories of 
$\langle\mathbb{N};<,\times\rangle$ and $\langle\mathbb{Z};<,\times\rangle$ follow from the undecidability of the theories of $\langle\mathbb{N};+,\times\rangle$
 and $\langle\mathbb{Z};+,\times\rangle$ (and the definability of $+$ from $<$ and $\times$ in $\mathbb{N}$ and $\mathbb{Z}$);
\item the decidability of the theory of the structure $\langle\mathbb{R};<,\times\rangle$ follows from the decidability of the theory of the structure $\langle\mathbb{R};+,\times\rangle$ (and the definability of $<$ from $+$ and $\times$ in $\mathbb{R}$);
\item  though, the undecidability of the additive and multiplicative structure $\langle\mathbb{Q};+,\times\rangle$ has nothing to do with the (decidable) theory of  multiplicative structure $\langle\mathbb{Q};<,\times\rangle$; as a matter of fact $+$ is not definable in the multiplicative structure $\langle\mathbb{Q};<,\times\rangle$ while $<$ is definable in $\langle\mathbb{Q};+,\times\rangle$.
\end{itemize}
\end{itemize}


\section{Some Open Problems}

There are lots of notable sets between $\mathbb{Q}$ and $\mathbb{R}$. For example
\begin{itemize}
\item[--]
$\mathbb{Q}[\sqrt{2}].$
\item[--]
$\mathbb{Q}[\sqrt{2},\sqrt{3},\sqrt{5},\cdots].$
\item[--] ${\vspace{1.4cm}^{_{|}}\hspace{-0.93ex}{\Omega}\hspace{-0.77ex}^{_{|}}}=$\label{omega} the set of real numbers that are constructible by ruler and compass.
\item[--] The field generated by the radicals of rational numbers (when they exist in the real numbers).
\end{itemize}

For any set $A$ with $\mathbb{Q}\subseteq A\subseteq\mathbb{R}$, Theorem~\ref{thm-oa} axiomatizes the theory of the structure $\langle A;<,+\rangle$ when $A$ is closed under the addition operation  and also the operations $x\mapsto x/n\ \ (n\in\mathbb{N^{+}})$. But the theory of the structure $\langle A;<,\times\rangle$ could be different, when $A$ is closed under $\times$ (it could not be even axiomatizable, or be axiomatizable by a different set of axioms). For example, it is not yet known if the theory of the structure $\langle\,{\vspace{1.4cm}^{_{|}}\hspace{-0.93ex}{\Omega}\hspace{-0.77ex}^{_{|}}}\,;
<,\times\rangle$ is decidable or not!?

Investigating any of these problems could lead to some wonderful results in Mathematical Logic and Computer Science.

\chapter*{Index}
\addcontentsline{toc}{chapter}{Index}
\markboth{}{Index}
\section*{A}
Abelian Group
\quad
\pageref{ag}
\\
Axiomatizability
\quad
\pageref{ax}
\section*{B}
B\'{e}zout's Theorem
\quad
\pageref{bz}
\section*{C}
Chinese Remainder Theorem
\quad
\pageref{cr}
\\
Complete Theory
\quad
\pageref{cty}
\section*{D}
Decidable Set
\quad
\pageref{ds}
\\
Decision Algorithm
\quad
\pageref{da}
\\
Dense Linear Order
\quad
\pageref{dlo}
\\
Discrete Order
\quad
\pageref{dco}
\\
Disjunctive Normal Form
\quad
\pageref{djnf}
\\
Divisible Group
\quad
\pageref{dvg}
\section*{E}
Effectively Enumerable Set
\quad
\pageref{ees}
\\
Entscheidungsproblem
\quad
\pageref{dp}
\section*{F}
Finitely Axiomatizable
\quad
\pageref{fa}
\section*{G}
Generalized Chinese Remainder Theorem
\quad
\pageref{crt}
\\
Group
\quad
\pageref{g}
\section*{L}
Lagrange's Four Square Theorem
\quad
\pageref{fslt}
\section*{M}
Main Lemma of Quantifier Elimination
\quad
\pageref{mainlem}
\\
\section*{N}
Non-trivial Group
\quad
\pageref{ntg}
\section*{O}
Ordered Group
\quad
\pageref{og}
\\
Ordered Structure
\quad
\pageref{def-os}
\\
Orders Without Endpoints
\quad
\pageref{weo}
\section*{Q}
Quantifier Elimination
\quad
\pageref{e}
\section*{S}
Successor
\quad
\pageref{su}
\section*{T}
Tarski-Robinson's Identity
\quad
\pageref{r}
\\
Tarski-Seidenberg's Theorem
\quad
\pageref{tst}
\\
Theory
\qquad
\pageref{ty}

\section*{List of Symbols}
\begin{tabular}{ll}
  $\Delta_1$ & $\pageref{delta1}$ \\
  $\Delta_1\hspace{-4.45mm}\backslash\hspace{-3.5mm}\not$ & $\pageref{ndelta1}$ \\
  ${\vspace{1.4cm}^{_{|}}\hspace{-0.93ex}{\Omega}\hspace{-0.77ex}^{_{|}}}$ & $\pageref{omega}$ \\
  $\Re_n(y)$ & $\pageref{rn}$ \\
  ${\sf TQ}$ & $\pageref{tq}$
\end{tabular}







%
%

\end{document}